\author{Benjamin Linowitz}
\address{Department of Mathematics\\Oberlin College\\Oberlin, OH 44074}
\email{benjamin.linowitz@oberlin.edu}
\author{D. B. McReynolds}
\address{Department of Mathematics\\Purdue University\\West Lafayette, IN 47907}
\email{dmcreyno@purdue.edu}
\author{Paul Pollack}
\address{Department of Mathematics\\University of Georgia\\Athens, GA 30602}
\email{pollack@uga.edu}
\author{Lola Thompson}
\address{Department of Mathematics\\Oberlin College\\Oberlin, OH 44074}
\email{lola.thompson@oberlin.edu}
\title{Counting and effective rigidity in algebra and geometry}
\DeclareMathAlphabet{\curly}{U}{rsfs}{m}{n}
\DeclareMathOperator{\Br}{Br}
\DeclareMathOperator{\covol}{covol}
\DeclareMathOperator{\disc}{disc}
\DeclareMathOperator{\diag}{diag}
\DeclareMathOperator{\Ram}{Ram}
\DeclareMathOperator{\Frob}{Frob}
\DeclareMathOperator{\Gal}{Gal}
\DeclareMathOperator{\lcm}{lcm}
\DeclareMathOperator{\N}{N}
\DeclareMathOperator{\Norm}{Norm}
\DeclareMathOperator{\nr}{nr}
\DeclareMathOperator{\PSL}{PSL}
\DeclareMathOperator{\Res}{Res}
\DeclareMathOperator{\Reg}{Reg}
\DeclareMathOperator{\SL}{SL}
\DeclareMathOperator{\SO}{SO}
\DeclareMathOperator{\SU}{SU}
\DeclareMathOperator{\MM}{M}
\DeclareMathOperator{\Tr}{Tr}
\DeclareMathOperator{\tr}{tr}
\DeclareMathOperator{\coarea}{coarea}
\newtheorem{thm}{Theorem}[section]
\newtheorem{cor}[thm]{Corollary}
\newtheorem{prop}[thm]{Proposition}
\newtheorem{lem}[thm]{Lemma}
\theoremstyle{definition}
\newtheorem*{rmk}{Remark}
\theoremstyle{remark}
\newtheorem{ex0}{Example}
\newtheorem{ex}{Examples}
\def\1{\mathbf{1}}
\def\disc{\mathrm{disc}}
\def\Prob{\mathbf{Prob}}
\theoremstyle{remark}
\theoremstyle{plain}
\def\farey{\mathfrak{F}}
\def\C{\mathbf{C}}
\def\Q{\mathbf{Q}}
\def\pp{\mathfrak{p}}
\def\qq{\mathfrak{q}}
\def\ff{\mathfrak{f}}
\def\Qq{\curly{Q}}
\def\Pp{\curly{P}}
\def\Rr{\curly{R}}
\def\R{\mathbf{R}}
\def\m{\mathfrak{m}}
\def\N{\mathbf{N}}
\def\Q{\mathbf{Q}}
\def\Z{\mathbf{Z}}
\def\Ss{\curly{S}}
\def\1{\mathbf{1}}
\def\disc{\mathrm{disc}}
\def\Gal{\mathrm{Gal}}
\def\Frob{\mathrm{Frob}}
\def\lcm{\mathop{\operatorname{lcm}}}
\newcommand{\su}{\subset}
\newcommand{\frakp}{\mathfrak{p}}
\newcommand{\frakq}{\mathfrak{q}}
\newcommand{\scrC}{\mathscr{C}}
\newcommand{\calD}{\mathcal{D}}
\newcommand{\calE}{\mathcal{E}}
\newcommand{\calO}{\mathcal{O}}
\newcommand{\abs}[1]{\left\vert#1\right\vert}
\newcommand{\set}[1]{\left\{#1\right\}}
\newcommand{\brac}[1]{\left[#1\right]}
\newcommand{\pr}[1]{\left( #1 \right) }
\newcommand{\iny}{\infty}
\newcommand{\lra} {\longrightarrow}
\def\v{{\bf v}}
\def\moverlay{\mathpalette\mov@rlay}
\def\mov@rlay#1#2{\leavevmode\vtop{%
   \baselineskip\z@skip \lineskiplimit-\maxdimen
   \ialign{\hfil$\m@th#1##$\hfil\cr#2\crcr}}}
\newcommand{\charfusion}[3][\mathord]{
    #1{\ifx#1\mathop\vphantom{#2}\fi
        \mathpalette\mov@rlay{#2\cr#3}
      }
    \ifx#1\mathop\expandafter\displaylimits\fi}
\let\@@pmod\pmod
\DeclareRobustCommand{\pmod}{\@ifstar\@pmods\@@pmod}
\def\@pmods#1{\mkern4mu({\operator@font mod}\mkern 6mu#1)}
\begin{document}

\begin{abstract}
\noindent The purpose of this article is to produce effective versions of some rigidity results in algebra and geometry. On the geometric side, we focus on the spectrum of primitive geodesic lengths (resp., complex lengths) for arithmetic hyperbolic 2--manifolds (resp., 3--manifolds). By work of Reid, this spectrum determines the commensurability class of the 2--manifold (resp., 3--manifold). We establish effective versions of these rigidity results by ensuring that, for two incommensurable arithmetic manifolds of bounded volume, the length sets (resp., the complex length sets) must disagree for a length that can be explicitly bounded as a function of volume. We also prove an effective version of a similar rigidity result established by the second author with Reid on a surface analog of the length spectrum for hyperbolic 3--manifolds. These effective results have corresponding algebraic analogs involving maximal subfields and quaternion subalgebras of quaternion algebras. To prove these effective rigidity results, we establish results on the asymptotic behavior of certain algebraic and geometric counting functions which are of independent interest.
\end{abstract}

\maketitle

\section{Introduction}


\subsection{Inverse problems}

%
%

\subsubsection{Algebraic problems}

Given a degree $d$ central division algebra $D$ over a field $k$, the set of isomorphism classes of maximal subfields $\textrm{MF}(D)$ of $D$ is a basic and well studied invariant of $D$.

\textbf{Question 1.} \textsl{Do there exist non-isomorphic, central division algebras $D_1,D_2/k$ with $\mathrm{MF}(D_1) = \mathrm{MF}(D_2)$?}

Restricting to the class of number fields $k$, by a well-known consequence of class field theory, when $D/k$ is a quaternion algebra, $\textrm{MF}(D) = \textrm{MF}(D')$ if and only if $D \cong D'$ as $k$--algebras.  Unfortunately, for most higher degree division algebras, we have $\textrm{MF}(D) = \textrm{MF}(D^{\mathrm{op}})$ and $D \ncong D^{\mathrm{op}}$ where $D^{\mathrm{op}}$ is the opposite algebra for $D$. For a fixed algebra $D/k$, the number of isomorphism classes of algebras $D'/k$ with $\textrm{MF}(D) = \textrm{MF}(D')$ is the \textbf{genus} of $D$ and is finite in this setting; see \cite{CRR}, \cite{GS}, \cite{Meyer} for some recent work on genus of $D/k$ for general fields $k$.



The Brauer group $\Br(k)$ of a field $k$ is the set of Morita equivalence classes $[A]$ of central, simple $k$--algebras. The group operation is given by tensor product $\otimes_k$ with inverses given by $[A^{\mathrm{op}}]$. Each class $[A]$ has a unique central division algebra $D_A$. Given a finite extension $L/k$, we have a homomorphism $\Res_{L/k}\colon \Br(k) \to \Br(L)$ defined by $\Res_{L/k}([A_0]) = [A_0 \otimes_k L]$. For a class $[A] \in \Br(L)$, the set $(\Res_{L/k})^{-1}([A])$ is the set of Morita equivalence classes $[A_0]$ in $\Br(k)$ such that $[A] = [A_0 \otimes_k L]$. Fix a finite extension $L_1/k$ and algebra $A/L_1$.


\textbf{Question 2.} \textsl{Does there exist a finite extension $L_2/k$ and an algebra $A'/L_2$ with $(\Res_{L_1/k})^{-1}(A) = (\Res_{L_2/k})^{-1}(A')$?}

In this generality, we cannot hope to conclude that $L_1 \cong L_2$ and $A \cong A'$ (see \cite{McReynolds}). However, when $L_1,L_2/k$ are quadratic extensions and $A$ is a quaternion algebra, we have $L_1 \cong L_2$ provided $(\Res_{L_1/k})^{-1}(A)$ is non-empty.

\subsubsection{Geometric problems}

Given a closed, negatively curved, Riemannian manifold $M$, we have an analytic invariant given by the \textbf{eigenvalue spectrum} $\mathcal{E}(M)$ of the Laplace--Beltrami operator acting on $L^2(M)$. Similarly, we have a geometric invariant given by the \textbf{primitive geodesic length spectrum} $\mathcal{L}_p(M)$ of lengths $\ell$ of primitive, closed geodesics. Both geometric invariants of $M$ are multi-sets of the form
\[ \mathcal{E}(M), \mathcal{L}_p(M) = \set{(\lambda_j,m_{\lambda_j})}, \set{(\ell_j,m_{\ell_j})} \subset \R^{\geq 0} \times \N. \]
The integers $m_{\lambda_j},m_{\ell_j}$ are called the \textbf{multiplicities} and give the dimension of the associated $\lambda_j$--eigenspace or the number of distinct occurrences of the primitive length $\ell_j$, respectively. Forgetting the multiplicities, the set of primitive lengths will be called the \textbf{length set}. Two consequences of the manifold being negatively curved are that every free homotopy class of closed loops contains a unique geodesic representative and the length spectrum is discrete; the eigenvalue spectrum is always discrete when $M$ is closed. Discreteness here means that the sets without multiplicity $\set{\lambda_j},\set{\ell_j}$ are discrete and the multiplicities $m_{\lambda_j},m_{\ell_j}$ are finite. These spectra are closely related (see \cite{Gangolli}). When $M$ is hyperbolic $2$--manifold, these spectra determine one another by Selberg's trace formula (see \cite[Ch 9, \S 5]{Buser}). When $M$ is a hyperbolic $3$--manifold, each closed geodesic can be assigned a complex length $\ell = \ell_0 + \theta i$ where $\theta$ is the angle of rotation and $\ell_0$ is the length of the geodesic. In this case, we have the \textbf{complex length spectrum} $\mathcal{L}_c(M)$ and associated \textbf{complex length set}, and the spectra $\mathcal{E}(M)$, $\mathcal{L}_c(M)$ determine one another by Selberg's trace formula. 


\textbf{Question 3.} \textsl{Do there exist non-isometric Riemannian manifolds $M_1,M_2$ such that $\mathcal{L}_p(M_1) = \mathcal{L}_p(M_2)$ (resp., $\mathcal{L}_c(M_1) = \mathcal{L}_c(M_2)$ or $\mathcal{E}(M_1) = \mathcal{E}(M_2)$)?}

Restricting to the class of closed hyperbolic $n$--manifolds, starting with the constructions of Vign{\'e}ras \cite{Vigneras80A}, Sunada \cite{Sunada}, many papers have constructed arbitrarily large finite families pairwise non-isometric hyperbolic $n$--manifolds with identical eigenvalue and (complex) length spectra for all $n\geq 2$; such manifolds are said to be \textbf{isospectral} or (\textbf{complex}) \textbf{length isospectral}. By construction, the pairs produced by Vign{\'e}ras, Sunada are commensurable; all presently known pairs are commensurable. It is not known if such pairs must be commensurable in any dimension $n \geq 2$. For arithmetic hyperbolic $2$-- or $3$--manifolds, they must be. Reid \cite{Reid-Isospectral} proved that hyperbolic 2--manifolds with identical length spectra are commensurable provided one of the manifolds is arithmetic. Using Selberg's trace formula, Reid also obtained an identical result for eigenvalue spectra. As arithmeticity is a commensurability invariant, the other manifold is also arithmetic. Reid also established this result for arithmetic hyperbolic $3$--manifolds with the length spectrum replaced with the complex length spectrum. Chinburg--Hamilton--Long--Reid \cite{chinburg-geodesics} extended Reid's result on length spectra for arithmetic hyperbolic 2--manifolds to arithmetic hyperbolic 3--manifolds. Prasad--Rapinchuk \cite{PrasadRap} also extended \cite{Reid-Isospectral} for many classes of arithmetic, locally symmetric manifolds. Before \cite{PrasadRap}, it was known that \cite{Reid-Isospectral} could not be extended to general locally symmetric manifolds as Lubotzky--Samuels--Vishne \cite{LSV05} constructed arbitrarily large finite families of pairwise incommensurable, isospectral arithmetic, compact, locally symmetric manifolds with real rank $n$ for all $n$ using a method similar to Vign{\'e}ras. For a fixed manifold, the number of commensurability classes is always finite by \cite{PrasadRap} but can be can be arbitrarily large by \cite{LSV05}. This finiteness is the geometric analog of the finiteness of genera for division algebras over number fields where genera are also arbitrarily large.

\subsection{Main results: Effective rigidity}

We refer the reader to the notation list found at the beginning of \S \ref{Background} for any undefined symbols or terms.

\subsubsection{Geodesics}

Our first result is an effective version of Reid \cite{Reid-Isospectral}.


\begin{thm}\label{thm:effectiveCHLR}
Let $M_1,M_2$ be compact arithmetic hyperbolic 2--manifolds (resp., $3$--manifolds) with volume less than $V$. There exist absolute effectively computable constants $c_1,c_2$ (resp., $c_3$) such that if the length sets (resp., complex length) of $M_1$ and $M_2$ agree for all lengths less than $c_1e^{c_2\log(V)V^{130}}$ (resp., $c_3e^{\left(\log(V)^{\log(V)}\right)}$), then $M_1$ and $M_2$ are commensurable.
\end{thm}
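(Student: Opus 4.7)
The plan is to argue the contrapositive: given two incommensurable compact arithmetic hyperbolic $2$- or $3$-manifolds $M_1,M_2$ of volume at most $V$, I will exhibit a primitive closed geodesic in one of them whose length lies below the stated threshold and cannot occur as a primitive length of the other.

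The first step is to reduce to bounded arithmetic data. By the Borel volume formula, the invariant trace field $k_i$ and the invariant quaternion algebra $B_i$ of $M_i$ have complexity explicitly bounded in $V$: the degree $[k_i:\bfQ]$, the absolute discriminant $|\disc(k_i)|$, and the product $\prod_{\frakp\in\Ram_f(B_i)}\Norm(\frakp)$ are each at most explicit functions of $V$. In dimension $2$, $k_i$ is totally real, whereas in dimension $3$, $k_i$ has exactly one complex place, and this signature constraint accounts for the sharper dimension-$3$ bound. Incommensurability forces either $k_1\not\cong k_2$ or (after identifying the fields) $B_1\not\cong B_2$. In the first case, effective Chebotarev (Lagarias--Odlyzko) locates a rational prime of norm bounded explicitly in $\max(|\disc(k_1)|,|\disc(k_2)|)$ whose splitting behavior detects the difference; in the second, a similar argument produces a finite prime $\frakp$ of $k=k_1=k_2$ of controlled norm ramifying in exactly one of $B_1,B_2$. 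In either case, Albert--Brauer--Hasse--Noether then yields a quadratic extension $F/k$ with $F\hookrightarrow B_i$ but $F\not\hookrightarrow B_{3-i}$ and discriminant explicitly bounded in terms of $V$.

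The next step is to convert this algebraic discrepancy into an honest geodesic. I use the standard dictionary between hyperbolic conjugacy classes in arithmetic Fuchsian or Kleinian groups and quadratic subfields of the quaternion algebra: a hyperbolic element $\gamma$ of translation length $\ell(\gamma)$ satisfies $2\cosh(\ell(\gamma)/2)=|\trd(\gamma)|$ (in dimension $2$) or its complex analog (in dimension $3$) and generates the quadratic extension $k(\gamma)\subset B_i$. Because $F$ does not embed in $B_{3-i}$, no element of the lattice defining $M_{3-i}$ can generate a quadratic extension isomorphic to $F$, so no length arising from a hyperbolic unit in an $\scrO_k$-order of $F$ can appear in $\mathcal{L}_p(M_{3-i})$. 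Conversely, any hyperbolic unit of such an order produces a hyperbolic conjugacy class in the commensurator of $\pi_1(M_i)$; by taking a suitable power that descends into $\pi_1(M_i)$ (controlled by its index in a maximal arithmetic lattice, bounded in turn by $V$), we obtain a primitive closed geodesic in $M_i$. Its length equals $2\log|\lambda|$ for an appropriate unit $\lambda\in\scrO_F$, and chaining effective regulator and fundamental-unit bounds (Zimmert, Friedman) with the size estimates from the previous steps gives the claimed iterated-exponential bound on $\ell$.

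The hard part is the quantitative bookkeeping: not only must one translate the abstract non-embedding $F\not\hookrightarrow B_{3-i}$ into a concrete distinguishing length in $\mathcal{L}_p(M_i)$, but that length must be realized by a geodesic of $M_i$ itself rather than only of a commensurable cover, and the congruence-index and regulator contributions must be combined carefully. The precise exponents $V^{130}$ and $(\log V)^{\log V}$ emerge from this combination, with the sharper dimension-$3$ bound reflecting the extra constraints imposed by the signature of the invariant trace field.
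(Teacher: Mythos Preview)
Your contrapositive strategy is essentially equivalent to the paper's direct argument once the trace fields coincide, but there is a genuine gap in how you handle the case $k_1 \not\cong k_2$, and this gap is precisely where the $(\log V)^{\log V}$ in dimension $3$ originates.

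You write that effective Chebotarev finds a rational prime distinguishing $k_1$ from $k_2$, and that ``in either case'' Albert--Brauer--Hasse--Noether produces a quadratic extension $F/k$ embedding in one algebra but not the other. But when $k_1 \not\cong k_2$ there is no common $k$, so this sentence does not parse. What you actually need is a geodesic in $M_1$ whose length cannot occur in $M_2$. In dimension $2$ this is cheap: by Proposition~\ref{prop:fuchsiantracefield}, any hyperbolic $\gamma_1 \in \Gamma_1^{(2)}$ satisfies $k_1 = \Q(\tr\gamma_1)$, so a matching length in $M_2$ would force $k_1 = k_2$. In dimension $3$, however, the analogous statement only holds when the eigenvalue $\lambda_{\gamma_1}$ is \emph{non-real}; if $\lambda_{\gamma_1} \in \R$ then $\Q(\tr\gamma_1)$ is only the maximal totally real subfield $k_1^+$, and a matching length in $M_2$ tells you nothing beyond $k_1^+ = k_2^+$. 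So you must produce a hyperbolic element with non-real eigenvalue and explicitly bounded length. The paper does this in Proposition~\ref{proposition:derivednonrealeigenvalue} by constructing a quadratic extension $L/k$ that is provably not Galois over $k^+$; this requires applying effective Chebotarev to the \emph{Galois closure} $\widehat{k}$, whose discriminant involves $n_k!^{n_k!}$ via Serre's bound. Stirling's formula applied to $n_k \leq 23 + \log V$ is what produces the $(\log V)^{\log V}$, not ``extra constraints imposed by the signature.''

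A smaller point: once $k_1 = k_2 = k$, your claim that a length coming from $F \hookrightarrow B_i$ cannot occur in $M_{3-i}$ needs the observation that equal lengths force equal traces (up to sign and complex conjugation), hence isomorphic quadratic extensions $k(\gamma_1) \cong k(\gamma_2)$ or $k(\overline{\gamma_2})$. In dimension $3$ you must therefore arrange that neither $F$ nor its complex conjugate embeds in $B_{3-i}$, and the construction of such an $F$ with controlled discriminant is exactly the effective Grunwald--Wang argument in the paper's Theorem~\ref{theorem:recognizingalgebras}, not a bare consequence of Albert--Brauer--Hasse--Noether.
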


Millichap \cite{Millichap,Millichap1} constructed roughly $(2n)!$ incommensurable hyperbolic 3--manifolds that have the same first $2n+1$ (complex) geodesic lengths. Moreover, the manifolds all have the same volume and the volume of these manifolds grows linearly in $n$. His examples are non-arithmetic and his methods are geometric/topological. Neither of these constructions produce lower bounds near the upper bound we provide in Theorem \ref{thm:effectiveCHLR}. Since the completion of this paper, Futer--Millichap \cite{FM17} and \cite{LMPT17} have produced additional examples of non-arithmetic and arithmetic hyperbolic 2-- and 3--manifolds that share the same geodesic lengths for the first $n$ lengths or any finite subset of lengths. Additionally, both constructions give control on the volumes of the examples.

We now sketch the proof of Theorem \ref{thm:effectiveCHLR} in the case that $M_1,M_2$ are hyperbolic $3$--manifolds with $\pi_1(M_j) = \Gamma_j$ and volumes bounded above by $V$. This proof appears in Section \ref{mainproofsubsection} and makes use of a wide variety of effective results in algebraic number theory. For simplicity, we assume that $M_1,M_2$ are derived from quaternion algebras. To prove Theorem \ref{thm:effectiveCHLR}, it suffices to show that the invariant trace fields and invariant quaternion algebras of $M_1,M_2$ are isomorphic (see for instance \cite[Ch 8.4]{MR}). That the trace fields are isomorphic is relatively straightforward. We show (in Proposition \ref{proposition:derivednonrealeigenvalue}) that there exist hyperbolic $\gamma_1\in\Gamma_1, \gamma_2\in\Gamma_2$, all of whose powers have non-real eigenvalues and whose associated geodesics have the same length. It follows that $\tr(\gamma_1) = \pm \tr(\gamma_2)$ and hence $\gamma_1,\gamma_2$ generate the same extension $k$ of $\Q$.  By \cite[Lemma 2.3]{chinburg-geodesics} this extension $k$ is isomorphic to the trace fields of both $M_1,M_2$.

That $M_1,M_2$ have isomorphic quaternion algebras $B_1,B_2$ is more nuanced. To prove that $B_1,B_2$ are isomorphic it suffices to show that $\mathrm{MF}(B) = \mathrm{MF}(B')$. We prove an effective version of this result in Theorem \ref{theorem:recognizingalgebras} by proving that if two quaternion algebras $B_1,B_2/k$ with $\abs{\disc(B_1)},\abs{\disc(B_2)} < x$ admit embeddings of precisely the same quadratic extensions of $L/k$ with $\abs{\Delta_{L/k}}$ less than some explicit function $f(x)$ (which may involve constants depending on $k$), then $B_1,B_2$ are isomorphic. In order to use Theorem \ref{theorem:recognizingalgebras} with quaternion algebras $B_1,B_2$ for $M_1,M_2$, we first show (in Lemma \ref{lem:Bdiscbound}) that $\abs{\disc(B_1)},\abs{\disc(B_2)}$ are bounded above by an explicit function $g(V)$. Setting $x=g(V)$, let $L/k$ be a quadratic extension which embeds into $B_1$ and has $\abs{\Delta_{L/k}}$ less than $f(g(V))$. Proposition \ref{proposition:finalbounds} shows that there exists $u_1 \in B_1$ with $L=k(u_1)$ that has image $\gamma_1 \in \Gamma_1$ that is associated to a geodesic in $M_1$ of length less than $c_3e^{\left(\log(V)^{\log(V)}\right)}$ for some constant $c$. By hypothesis there exists $\gamma_2 \in \Gamma_2$ associated to a geodesic with the same complex length as $\gamma_1$. The preimage $u_2 \in B_2$ of $\gamma_2$ generates a quadratic extension $k(u_2)/k$ which is isomorphic to $L$. Interchanging the roles of $M_1,M_2$ and applying Theorem \ref{theorem:recognizingalgebras}, we conclude that $B_1,B_2$ are isomorphic.

\subsubsection{Totally geodesic surfaces}

For a finite volume hyperbolic 3--manifold $M$, $GS(M)$ will denote the isometry classes of finite volume, properly immersed, totally geodesic surfaces up to free homotopy. A special case of Thurston \cite[Cor 8.8.6.]{Thurston} implies that $GS(M)$ contains only finitely many Riemann surfaces of a fixed finite topological type; this also follows from a compactness argument. Reid and the second author \cite[Thm 1.1]{McReid} prove that if $M_1,M_2$ are arithmetic hyperbolic $3$--manifolds with $GS(M_1) = GS(M_2) \ne \emptyset$, then $M_1,M_2$ are commensurable. Our second result is an effective version.

\begin{thm}\label{EffectiveMcReid}
Let $M_1,M_2$ be arithmetic hyperbolic $3$--manifolds with volumes less than $V$ and $GS(M_1) \cap GS(M_2) \ne \emptyset$. Then there exists an absolute, effectively computable constant $c$ such that if $GS(M_1),GS(M_2)$ agree for all totally geodesic surfaces with area less than $e^{cV}$, then $M_1$ and $M_2$ are commensurable.
\end{thm}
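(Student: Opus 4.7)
The plan is to mirror the strategy used for the effective length rigidity theorem, replacing primitive geodesics with totally geodesic surfaces and trading the algebraic invariant $\mathrm{MF}$ for the invariant $(\Res_{L/k})^{-1}([A])$ discussed under Question 2. Each arithmetic hyperbolic $3$--manifold $M_i$ has an invariant trace field $k_i$ with a unique complex place and an invariant quaternion algebra $B_i/k_i$ that is ramified at every real place of $k_i$. A properly immersed, finite--area, totally geodesic surface in $M_i$ lifts to a Fuchsian subgroup whose invariant trace field $F \subset k_i$ is totally real with $[k_i:F] = 2$, and whose associated quaternion $F$--algebra $A_0$ satisfies $A_0 \otimes_F k_i \cong B_i$ and is ramified at every real place of $F$ except one. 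Fixing a surface $X_0 \in GS(M_1) \cap GS(M_2)$ therefore pins down a common totally real base field $F$, forces $k_1$ and $k_2$ to be quadratic extensions of $F$, and identifies the surfaces in $GS(M_i)$ having invariant trace field $F$ with (a bounded number of representatives of) the fiber $(\Res_{k_i/F})^{-1}([B_i])$.

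I would next control the search space. Borel--Prasad provides finiteness of arithmetic commensurability classes of volume at most $V$ together with effective upper bounds, polynomial in $V$ up to logarithms, on $\disc(k_i)$ and on the set of finite places at which $B_i$ is ramified; since $F \subset k_i$, the discriminant $\disc(F)$ is likewise bounded. Under the assumption that $M_1$ and $M_2$ are incommensurable, the pairs $(k_1,B_1)$ and $(k_2,B_2)$ cannot be isomorphic, so by the rigidity case of Question 2 for quadratic extensions and quaternion algebras (applicable because $GS(M_1) \cap GS(M_2) \ne \emptyset$ supplies the required non-emptiness of the fiber), the fibers $(\Res_{k_1/F})^{-1}([B_1])$ and $(\Res_{k_2/F})^{-1}([B_2])$ cannot coincide. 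Hence there must exist a class $[A_0'] \in \Br(F)$ ramified at all but one archimedean place of $F$ and belonging to exactly one of the two fibers; the corresponding totally geodesic surface $X'$ witnesses $GS(M_1) \neq GS(M_2)$, and by Borel's volume formula its area is controlled by $\disc(F)$ and the reduced discriminant of $A_0'$.

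The heart of the argument, and the main obstacle, is producing such a distinguishing $[A_0']$ with small reduced discriminant. Here the algebraic counting estimates developed earlier in the paper enter: an upper bound on the number of candidate pairs of commensurability classes of volume at most $V$, combined with a lower bound for the number of quaternion $F$--algebras of reduced discriminant at most $X$ whose ramification at the finitely many primes relevant to $B_1$ and $B_2$ can be prescribed independently, forces a mismatch between the two fibers once $X$ exceeds a single exponential in $V$. Borel's volume formula then converts this discriminant bound into an area bound of the form $e^{cV}$ for an effective constant $c$. A technical nuisance to handle is the passage from isomorphism classes of Fuchsian groups (or of totally geodesic surfaces) to the Brauer classes of their underlying quaternion algebras, where several non-conjugate surfaces can share a single algebra via different maximal orders or different choices of the unramified archimedean place; this multiplicity is absorbed into the constant $c$ and does not affect the shape of the bound.
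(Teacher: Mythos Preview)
Your overall architecture matches the paper: reduce to the algebraic picture (a common totally real subfield $k$, quadratic extensions $L_1,L_2$, and the fibers $(\Res_{L_i/k})^{-1}([B_i])$), find a quaternion $k$--algebra $B'$ of small discriminant lying in exactly one fiber, and then realize $B'$ as a totally geodesic surface of bounded area. Two of your steps, however, are not as written and diverge from what the paper actually does.

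First, the ``heart of the argument'' you sketch --- a pigeonhole using an upper bound on commensurability classes against a lower bound on quaternion algebras with prescribed local behavior --- does not produce an element of the \emph{symmetric difference} of the two fibers. Counting algebras in each fiber, even with good asymptotics, gives you no control over where the fibers first disagree. The paper instead proves and invokes Theorem~\ref{thm:recognizingalgebrasfromsubalgebras} directly: assuming $L_1\not\cong L_2$, the effective Chebotarev density theorem applied to the biquadratic extension $L_1L_2/k$ yields a prime $\omega_1$ of $k$ of norm at most $d_L^{C}(\log|\disc(B_1)\disc(B_2)|)^2$ that is inert in $L_1/k$ and split in $L_2/k$ (and a second inert prime $\omega_2$ for parity). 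One then \emph{constructs} $B'$ by adding $\omega_1$ (and possibly $\omega_2$) to $\Ram(B_0)$; Theorem~\ref{thm:MR955extension} shows $B'\otimes_k L_1\cong B_1$ while the split prime $\omega_1$ forces $B'\otimes_k L_2\not\cong B_2$. This is an explicit construction, not a counting argument, and it is what pins $|\disc(B')|$ down polynomially in $V$.

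Second, Borel's volume formula alone does not give the bound $e^{cV}$ on the area of the distinguishing surface; it only bounds the coarea of $\Gamma_{\calO_0}$ polynomially in $|\disc(B')|$ (Lemma~\ref{lem:fuchsianvolbound}). The exponential in $V$ enters because you must embed a Fuchsian group with invariant algebra $B'$ into the given $\Gamma_i$, and the paper does this by intersecting $\Gamma_{\calO_0}$ with $\Gamma_i^{(2)}$ (Proposition~\ref{proposition:ggsprop}). The index $[\Gamma_i:\Gamma_i^{(2)}]$ is bounded by $2^{d(\Gamma_i)}\le 2^{CV}$ via Gelander's rank--volume inequality (Theorem~\ref{RankVolumeInequality}), and this is the source of the $e^{cV}$. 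Your remark about ``multiplicity absorbed into the constant'' addresses a different (and genuinely minor) issue and does not cover this index bound.
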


There are infinitely many commensurability classes $\mathcal{C}$ of arithmetic hyperbolic 3--manifolds such that all $M \in \mathcal{C}$ have $GS(M) = \emptyset$ (see \cite[Cor 7]{MR-GeodesicSurfaces} and \cite[p.~546]{CR-SimpleGeodesics}). However, once an arithmetic hyperbolic 3--manifold has one such surface, it is a well-known fact that there are necessarily infinitely many distinct commensurability classes of such surfaces. Below, Theorem \ref{thm:infinitelymanytotallygeodesics} provides a lower bound for the number of commensurability classes of surfaces up to some volume in these arithmetic hyperbolic $3$--manifold and hence implies the infinitude of such surfaces for those manifolds. To the best of our knowledge the lower bound we provide is the first such lower bound.

\subsubsection{Algebraic}

We now turn to algebraic effective rigidity results which, aside from being independently interesting, provide us with tools for proving the above geometric effective rigidity results. Our first result is an effective version of the fact that quaternion algebras over number fields are determined by their maximal subfields.

\begin{thm}\label{theorem:recognizingalgebras}
Let $k$ be a number field and let $B$, $B^\prime/k$ be quaternion algebras satisfying $\abs{\disc(B)}, \abs{\disc(B^\prime)} < x$. If every quadratic field extension $L/k$ with
\[ \abs{\Delta_{L/k}} < 64^{n_k^3}d_k^{n_k} e^{2n_k\brac{\frac{21x}{\log^3(x)}+x}} \]
embeds into $B$ if and only if it embeds into $B^\prime$, then $B\cong B^\prime$.
\end{thm}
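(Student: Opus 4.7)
The plan is to argue by contrapositive: if $B\not\cong B'$, produce a quadratic extension $L/k$ with $|\Delta_{L/k}|$ below the stated bound that embeds into exactly one of $B,B'$. The key classical input is that quaternion algebras over $k$ are classified by their ramification sets $\Ram(B)$ (finite of even cardinality), together with the Albert--Brauer--Hasse--Noether embedding criterion: a quadratic extension $L/k$ embeds in $B$ if and only if no place in $\Ram(B)$ splits in $L$. Hence if $B\not\cong B'$, pick a place $v_0\in\Ram(B)\triangle\Ram(B')$; after swapping $B,B'$ if necessary, assume $v_0\in\Ram(B)\setminus\Ram(B')$. It then suffices to construct a quadratic $L/k$ such that $v_0$ splits in $L$ while every $v\in\Ram(B')$ remains non-split, so that $L$ embeds in $B'$ but not in $B$.

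To produce such an $L$, write $L=k(\sqrt{\alpha})$; the local behavior of $L$ at each constrained place $v$ is then dictated by the image of $\alpha$ in $k_v^\times/(k_v^\times)^2$. Existence of $\alpha\in k^\times$ satisfying these finitely many square-class conditions is classical via weak approximation, but the challenge is quantitative. The conductor $\mathfrak{m}$ of the corresponding quadratic idele-class character can be taken to be supported on $\{v_0\}\cup\Ram(B')$, together with the primes over $2$ and possibly one auxiliary prime required to solve the global parity obstruction coming from Hilbert reciprocity. Then $|\Delta_{L/k}|$ is controlled by $N(\mathfrak{m})$ up to a factor measuring the $2$-torsion in the ray class group modulo $\mathfrak{m}$ and in the unit group of $k$. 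Since the finite local norms of places in $\Ram(B')$ multiply to at most the finite part of $\disc(B')<x$, the contribution from $\Ram(B')$ to $N(\mathfrak{m})$ is at most $x$, while the prefactor $64^{n_k^3}d_k^{n_k}$ absorbs Minkowski--Hermite-type estimates for the $k$-dependent pieces (unit-torsion, the relevant $S$-class group, and archimedean conductor exponents).

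The main obstacle is locating the auxiliary prime with controlled norm. By class field theory, such a prime must lie in a prescribed Frobenius class of an auxiliary ray class field of $k$ whose degree and discriminant are polynomial in $n_k$, $d_k$, and $x$. An unconditional effective Chebotarev-type density theorem, applied in conjunction with the explicit prime-ideal counting estimates developed elsewhere in the paper, produces such a prime with norm at most $\exp(2n_k[21x/\log^3(x)+x])$, which then feeds directly into the final discriminant bound. This effective prime-searching step is the core quantitative difficulty; the structural reduction to prescribed local conditions, by contrast, is an essentially formal application of class field theory together with the Albert--Brauer--Hasse--Noether embedding criterion.
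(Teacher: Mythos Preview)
Your high-level strategy is correct and matches the paper: argue by contrapositive, pick a place $v_0$ in the symmetric difference of the ramification sets, and build a quadratic $L/k$ that splits at $v_0$ but is non-split at the places ramifying in the other algebra, then invoke Albert--Brauer--Hasse--Noether. Where your proposal goes wrong is in explaining the origin of the bound and in the mechanism you use to obtain it.

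First, there is no parity or Hilbert-reciprocity obstruction here, and no auxiliary prime is needed: for $\Z/2\Z$--extensions the Grunwald--Wang special case does not arise, so any finite set of local conditions can be realized globally. Your appeal to effective Chebotarev to locate an auxiliary prime is therefore a red herring, and it is certainly not the source of the exponential term $e^{2n_k[21x/\log^3(x)+x]}$. If one really only imposed conditions at $\{v_0\}\cup\Ram(B')$ as you suggest, Wang's effective Grunwald--Wang would give a conductor bound polynomial in $x$ (the product of the finite norms in $\Ram(B')$ is at most $x$), far smaller than what the theorem states.

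What the paper actually does is cruder but cleaner: it imposes local conditions at \emph{every} finite prime of $k$ of norm less than $x$ (inert or ramified everywhere except split at $\mathfrak{p}=v_0$) and at all real places. This automatically handles $\Ram(B')$ without knowing which primes lie in it. Wang's effective Grunwald--Wang then bounds $|\mathfrak{f}_{L/k}|$ by a constant times $\big(\prod_{p\le x}p\big)^{2n_k}$, and the exponential factor in the theorem comes from the elementary Chebyshev-type estimate $\prod_{p\le x}p\le e^{21x/\log^3(x)+x}$ (Lemma~\ref{lem:thetabound}). The factor $64^{n_k^3}d_k^{n_k}$ absorbs the $k$-dependent constants from Wang's bound via Lemma~\ref{lem:integralbasisbound}. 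So the quantitative heart of the proof is not an effective Chebotarev search but rather effective Grunwald--Wang combined with an explicit bound on $\theta(x)$.
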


We further note that in Theorem \ref{theorem:recognizingalgebras}, if the quaternion algebras $B,B^\prime$ are both unramified at a common real place of $k$, then we need only consider quadratic extensions $L/k$ which are not totally complex. 	

Our second result is the algebraic counterpart of our effective result, Theorem \ref{EffectiveMcReid}, on totally geodesic surfaces.


\begin{thm}\label{thm:recognizingalgebrasfromsubalgebras}
Suppose that $k$ is a number field and $B_0/k$ is a quaternion algebra. Let $L_1, L_2/k$ be quadratic extensions and define $B_1=B_0\otimes_k L_1$ and $B_2=B_0\otimes_k L_2$. If  $B\otimes_k L_1\cong B_1$ if and only if $B\otimes_k L_2\cong B_2$ for every quaternion algebra $B/k$ with $\Ram_\infty(B)=\Ram_\infty(B_0)$ and satisfying
\[ \abs{\disc(B)}\leq d_\ell^{2C} (2\log\pr{\abs{\disc(B_1)}\abs{\disc(B_2)}})^4\abs{\disc(B_1)}\abs{\disc(B_2)}, \]
then $L_1\cong L_2$ and $B_1\cong B_2$.
\end{thm}

Here $C$ is the (absolute) constant appearing in the bound on the least prime ideal in the Chebotarev density theorem \cite{effectiveCDT}. Theorem \ref{thm:recognizingalgebrasfromsubalgebras} is stronger than the algebraic result deduced in \cite{McReid}.  Consequently, Theorem \ref{thm:recognizingalgebrasfromsubalgebras} provides similar geometric spectral rigidity results but for a broader class of manifolds modeled on $(\mathbf{H}^2)^a \times (\mathbf{H}^3)^b$.


\subsection{Main tools: Counting function and asymptotic behavior}


Understanding the asymptotic behavior of counting functions is central to the field of analytic number theory.  Our work falls within the subfield of arithmetic statistics, which centers around counting problems on number fields and elliptic curves with bounded discriminant. The analytic method used in our proofs goes at least back to Harvey Cohn \cite{cohn54}, who used a similar approach to count the number of abelian cubic extensions of $\Q$ with bounded discriminant. Other seminal works in this area include the classical theorem of Davenport--Heilbronn \cite{davenportheilbronn}, which provides an asymptotic formula for the number of cubic number fields with bounded discriminant, and its various generalizations to certain classes of number fields of higher degree (for an excellent survey of these results we refer the reader to Bhargava's lecture from the 2006 ICM \cite{bhargavaICM}). The general philosophy used in all of this work is to introduce a generating function whose coefficients count the object being studied and then apply a Tauberian theorem to convert information about the analytic behavior of these functions near their singularities into useful information about the counts. Although the results in this subsection are functioning as tools for proving our above stated results, they fall naturally within this larger program of study and thus are of independent interest. Specifically, the technical results that we discuss below all involve counting problems on central division algebras with bounded discriminant.

\subsubsection{Algebraic}\label{section:algebraicresults}

We now turn to the statements of our main algebraic asymptotic results. Let
\begin{multline*}
N_{m,n}(x):=\#\{\text{central simple algebras $A/k$ of dimension $n^2$ of the form}\\  \text{$A=\MM(r,D)$, where $\dim(D)=d^2$ for some $d\mid m$,  and $\abs{\disc(A)}\le x$}\}.
\end{multline*}
We now state our first algebraic asymptotic counting result.

\begin{thm}\label{thm:area1}
If $N(x)$ denotes the number of division algebras $D/k$ of dimension $n^2$ with $\abs{\disc(D)} \leq x$ and $\ell$ is the smallest prime divisor of $n$, then
\[ N(x) = \sum_{m \mid n} \mu(n/m) N_{m,n}(x). \]
Moreover, there is a constant $\delta_n > 0$, which may depend on $k$, so that
\begin{equation}\label{eq:Nxasymptotic}
N(x) = (\delta_n+o(1)) x^{\frac{1}{n^2(1-1/\ell)}} (\log{x})^{\ell-2} \quad \text{ as } x\to\infty.
\end{equation}
\end{thm}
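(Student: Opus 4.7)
My plan follows the standard arithmetic-statistics paradigm: encode each count by a Dirichlet series, locate its rightmost singularity via class field theory and Euler products, and extract the asymptotic through a Selberg--Delange type Tauberian theorem.

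The identity $N(x) = \sum_{m \mid n} \mu(n/m) N_{m,n}(x)$ is a direct Möbius inversion. For each $d \mid n$, let $G_d(x)$ count isomorphism classes of central division $k$-algebras $D$ of dimension $d^2$ with $\abs{\disc(\MM(n/d, D))} \le x$. By Wedderburn, each central simple $k$-algebra of dimension $n^2$ and index dividing $m$ is $\MM(n/d, D)$ for a unique $d \mid m$ and unique (up to isomorphism) such $D$; hence $N_{m,n}(x) = \sum_{d \mid m} G_d(x)$ and $G_n(x) = N(x)$, and Möbius inversion on the divisor lattice of $n$ yields the first claim.

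For the asymptotic, I parameterize the classes underlying $N_{m,n}(x)$ via the Brauer--Hasse--Noether exact sequence: each corresponds to a tuple $(a_v)_v$ with $a_v \in \frac{1}{m}\Z/\Z$, finitely many nonzero, and $\sum_v a_v = 0$. The local index at a finite place $v$ is the order $d_v$ of $a_v$, and a standard local maximal-order calculation gives
\[ \abs{\disc A} = \prod_{v \text{ finite}} N(\mathfrak{p}_v)^{n^2(1 - 1/d_v)}. \]
Imposing $\sum_v a_v = 0$ via character orthogonality on $\frac{1}{m}\Z/\Z$ (the archimedean places contribute only to this constraint and so enter only as a finite multiplicative factor $c_\infty$) yields
\[ Z_{m,n}(s) := \sum_A \abs{\disc A}^{-s} = \frac{c_\infty}{m} \sum_{\chi} \prod_{v \text{ finite}} f_{v,\chi}(s). \]
For the principal character, $f_{v,1}(s) = 1 + \sum_{d \mid m,\, d > 1} \varphi(d)\, N(\mathfrak{p}_v)^{-s n^2(1 - 1/d)}$; writing $\ell(m)$ for the smallest prime divisor of $m$, the $d = \ell(m)$ term dominates and one factors $\prod_v f_{v,1}(s) = \zeta_k\bigl(sn^2(1 - 1/\ell(m))\bigr)^{\ell(m) - 1} H_m(s)$ with $H_m$ holomorphic and non-vanishing at $s_0(m) := 1/(n^2(1-1/\ell(m)))$. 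A routine character-sum estimate shows the non-principal $\chi$ contribute Euler products that are holomorphic (or carry a strictly smaller-order pole) at $s_0(m)$.

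Thus $Z_{m,n}(s)$ has a single dominant pole of order $\ell(m) - 1$ at $s_0(m)$, and the Selberg--Delange theorem yields $N_{m,n}(x) \sim C_m\, x^{s_0(m)}(\log x)^{\ell(m) - 2}$. Only the summands with $\ell \mid m$ meet the rightmost pole $s_0 = 1/(n^2(1-1/\ell))$, so substituting into the Möbius identity produces $N(x) = (\delta_n + o(1))\, x^{s_0}(\log x)^{\ell - 2}$ with $\delta_n$ an alternating Möbius-weighted sum of the $C_m$. The hard part is verifying $\delta_n > 0$: a priori this sum could vanish. I plan to handle this by performing the inclusion-exclusion at the level of the local invariants rather than at the level of final residues, so that the Möbius cancellations collapse into an Euler product over those tuples $(a_v)$ containing at least one coordinate of order exactly $\ell$, which is manifestly positive. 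The secondary technicality of archimedean places when $n$ is even (real places carrying a $\Z/2$ Brauer group) only multiplies the leading coefficient by a finite positive constant absorbed into $\delta_n$ and does not affect the exponent or log power.
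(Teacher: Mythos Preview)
Your overall strategy matches the paper's closely: M\"obius inversion on the divisor lattice, a Dirichlet series built via character orthogonality on $\tfrac{1}{m}\Z/\Z$, and Delange's Tauberian theorem applied to the resulting Euler products. Two points need correction, one minor and one genuine.

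First, the claim that non-principal characters contribute Euler products that are holomorphic or have a strictly smaller-order pole at $s_0(m)$ is not quite right. What actually happens (and what the paper carefully checks) is that characters $\chi_j$ with $\ell \mid j$ all produce a pole of the full order $\ell-1$, while those with $\ell \nmid j$ produce a \emph{zero} (a factor of $\zeta_k^{-1}$). When $m>\ell$ there are several non-principal characters with $\ell\mid j$, and they all feed into the leading constant $\delta_{m,n}$. This does not change the exponent or the power of $\log x$, but your description of where the leading term comes from is slightly off.

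The genuine gap is your positivity argument for $\delta_n$. The condition singling out division algebras of dimension $n^2$ is $\lcm_\pp m_\pp = n$, not ``at least one coordinate of order $\ell$,'' and a global constraint of this type cannot be repackaged as an Euler product; after M\"obius inversion you are left with an alternating sum of products, not a single product with visibly positive local factors. In particular, when $n>\ell$ you must force some local index to be divisible by every prime power in $n$, and those places contribute to lower-order terms in the discriminant count, so there is no ``manifestly positive'' Euler product to point to. The paper does not attempt any such collapse. Instead it proves $\delta_n>0$ by a direct lower bound: fix finite primes $\pp_1,\dots,\pp_n$ and assign Hasse invariant $1/n$ at each (this forces $\lcm = n$ at bounded discriminant cost, and the sum $n\cdot\tfrac{1}{n}=1$ is already integral), then let the remaining data range over $\ell^2$-dimensional division algebras unramified at the $\pp_i$. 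The already-established $n=\ell$ case gives $\gg X^{1/(\ell^2(1-1/\ell))}(\log X)^{\ell-2}$ such algebras, and rescaling yields $\gg x^{1/(n^2(1-1/\ell))}(\log x)^{\ell-2}$ division algebras of dimension $n^2$, which forces $\delta_n>0$. You should replace your proposed positivity argument with a construction of this kind.
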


The key component of this proof is a classical Tauberian theorem of Delange, which allows us to precisely estimate $N_{m, n}(x)$ provided that we understand the analytic behavior of its associated Dirichlet series. We next provide an asymptotic count of the quadratic extensions that embed in a fixed quaternion algebra over a fixed field $k$.

\begin{thm}\label{thm:countingquads}
Fix a number field $k$ and a quaternion algebra $B/k$. The number of quadratic extensions $L/k$ which embed into $B$ and satisfy $\abs{\Delta_{L/k}} \leq x$ is asymptotic to $c_{k,B} x$ as $x\to\infty$, where $c_{k,B} > 0$. Moreover, if $\kappa_k$ is the residue at $s=1$ of $\zeta_k(s)$, $r_2$ is the number of pairs of complex embeddings of $k$, and $r_B'$ is the number of places of $k$ (both finite and infinite) that ramify in $B$, then $c_{k,B} \geq \frac{1}{2^{r_B'+r_2}} \frac{\kappa_k}{\zeta_k(2)}$.
\end{thm}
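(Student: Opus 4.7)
By the Albert-Brauer-Hasse-Noether theorem for quaternion algebras, a quadratic extension $L/k$ admits a $k$-embedding into $B$ if and only if no place in $\Ram(B)$ splits in $L$, i.e.\ for every $v \in \Ram(B)$ the local algebra $L \otimes_k k_v$ is a field. Writing $L = k(\sqrt{\alpha})$ with $\alpha \in k^\times/(k^\times)^2$, this is the purely local requirement that $\alpha \notin (k_v^\times)^2$ at each $v\in \Ram(B)$. The theorem thus reduces to a count of quadratic extensions of $k$, ordered by relative discriminant, subject to finitely many local conditions.

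For the asymptotic, I would set up the Dirichlet series
\[
Z(s) \;=\; \sum_{\substack{L/k \text{ quadratic}\\ L \hookrightarrow B}} \frac{1}{N(\Delta_{L/k})^{s}}
\]
and analyze it via Hecke characters. Class field theory identifies quadratic extensions of $k$ with nontrivial order-$2$ Hecke characters $\chi$ of $C_k$, and the conductor-discriminant formula gives $\Delta_{L/k} = \mathfrak{f}(\chi_L)$; the embedding constraint becomes the requirement that $\chi_v$ be nontrivial at each $v\in\Ram(B)$. Grouping characters by their local components and summing over a finite group's worth of global completions of the local data (an avatar of the $2$-part of the ray class group), one writes $Z(s)$ as a bounded factor times an Euler product that differs from $\zeta_k(s)$ by a function holomorphic and nonzero on $\Re(s)\ge 1$. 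Thus $Z(s)$ has a simple pole at $s=1$, and a Tauberian theorem in the style of the one Delange used for Theorem \ref{thm:area1} (or Wiener-Ikehara) delivers $N_{k,B}(x) \sim c_{k,B}x$ with $c_{k,B}>0$.

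For the lower bound on $c_{k,B}$, rather than pin down the residue of $Z(s)$ exactly I would count an explicit subfamily. Restrict attention to $L = k(\sqrt{\alpha})$ with $\alpha\in \mathcal{O}_k$ such that $(\alpha) = \mathfrak{a}$ is a squarefree integral ideal coprime to $2\disc(B)$, and $\alpha$ satisfies the prescribed local non-square condition at every $v\in\Ram(B)$. For such $L$, the relative discriminant is comparable in norm to $N(\mathfrak{a})$. Three standard ingredients combine: (i) the density of squarefree integral ideals of $\mathcal{O}_k$ of norm at most $x$ is asymptotic to $(\kappa_k/\zeta_k(2))\,x$; (ii) at each $v\in\Ram(B)$, the required local square class cuts the count by at most a factor of $2$, contributing $2^{-r_B}$; (iii) the passage from squarefree ideals $\mathfrak{a}$ to actual quadratic extensions $k(\sqrt{\alpha})$ must account for $\mathcal{O}_k^\times/(\mathcal{O}_k^\times)^2$ together with the archimedean square classes, and the worst-case loss from the compatibility at complex places contributes the remaining $2^{-r_2}$. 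Multiplying these factors yields $c_{k,B} \geq 2^{-(r_B+r_2)}\kappa_k/\zeta_k(2)$.

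The main obstacle is the careful bookkeeping of local-global compatibility in step~(iii): one must verify that, given $\mathfrak{a}$ and the prescribed local square classes at ramified places, a global $\alpha \in k^\times$ exists for a positive proportion of $\mathfrak{a}$, and one must correctly quantify the combinatorial loss from units and archimedean signs. This is a careful application of class field theory (equivalently of Hilbert symbols and the product formula), but requires no analytic input beyond the Tauberian step already used for the asymptotic.
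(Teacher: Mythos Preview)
Your overall strategy for the asymptotic is sound but far more laborious than the paper's. The paper does not build the Dirichlet series $Z(s)$ from scratch; instead it invokes two black boxes. First, the Datskovsky--Wright count (Proposition~\ref{prop:cohen}) gives the total number of quadratic extensions $L/k$ with $|\Delta_{L/k}|\le x$ as $\sim 2^{-r_2}\,\kappa_k\,\zeta_k(2)^{-1}\,x$. Second, Wood's independence theorem (Proposition~\ref{prop:wood}) says that, for any finite set $S$ of places, the events ``$\pp$ has prescribed splitting type in $L$'' for $\pp\in S$ are asymptotically independent, and the non-split probability at each $\pp\in\Ram(B)$ is at least $\tfrac12$. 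Multiplying gives both the asymptotic $c_{k,B}x$ and the inequality $c_{k,B}\ge 2^{-r_B}\cdot 2^{-r_2}\kappa_k/\zeta_k(2)$ in two lines. Your Dirichlet-series analysis would, if carried out, essentially reprove these two results.

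There is a genuine gap in your lower-bound argument, specifically in step~(iii). You attribute the factor $2^{-r_2}$ to ``compatibility at complex places,'' but complex places impose no condition whatsoever on the embedding $L\hookrightarrow B$ (a complex place cannot lie in $\Ram(B)$, and every quadratic \'etale algebra over $\mathbf{C}$ is split). The $2^{-r_2}$ is not a loss incurred by your subfamily construction; it is already present in the leading constant of the \emph{total} count of quadratic extensions of $k$. Your squarefree-ideal heuristic in step~(i) produces the constant $\kappa_k/\zeta_k(2)$, which overcounts quadratic extensions: the passage from squarefree ideals $\mathfrak{a}$ to fields $k(\sqrt{\alpha})$ requires $\mathfrak{a}$ to be principal up to squares and then quotients by $\mathcal{O}_k^\times/(\mathcal{O}_k^\times)^2$, and tracking this carefully is exactly where the $2^{-r_2}$ emerges (via Dirichlet's unit theorem and the archimedean normalization). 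As written, your steps (i)--(iii) do not assemble into the stated bound; you would need to redo the Datskovsky--Wright computation to recover the correct constant, at which point you are back to the paper's approach.
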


The proof of this result stems from a powerful theorem of Wood \cite{wood10}, which allows us to model the splitting of finitely many primes as mutually independent events, over the class of random extensions of $k$. Our final result provides an asymptotic count of the quaternion algebras over $k$ with a specified finite collection of maximal subfields. Note that we require some conditions on the collection of subfields as some selections might not have any algebra that contains them as maximal subfields.

\begin{thm}\label{thm:quatembedding}
Fix a number field $k$, fix quadratic extensions $L_1, L_2, \dots, L_r/k$, let $L$ be the compositum of the $L_i$, and suppose that $[L:k] = 2^r$. Then the number of quaternion algebras $B/k$ with $\abs{\disc(B)}$ less than $x$ and which admit embeddings of all of the $L_i$ is $\sim \delta \cdot x^{1/2}/(\log{x})^{1-\frac{1}{2^r}}$ as $x\to\infty$. Here $\delta$ is a positive constant explicitly given in the proof and depending only on the $L_i$ and $k$.
\end{thm}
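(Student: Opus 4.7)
The plan is to classify quaternion algebras $B/k$ by their ramification sets, convert the simultaneous embedding requirements into a Chebotarev condition on the finite ramified primes, and then extract the asymptotic from a Dirichlet series via a Tauberian theorem of Selberg--Delange type.

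First I would invoke the local--global embedding criterion: a quadratic extension $F/k$ embeds in a quaternion algebra $B/k$ if and only if no place of $\Ram(B)$ splits in $F/k$. Applied simultaneously to $L_1,\dots,L_r$, and using the hypothesis $[L:k]=2^r$ so that $G := \Gal(L/k) \cong (\Z/2\Z)^r$ with the restrictions to the $L_i$ giving the standard basis of $\widehat{G}$, the condition ``every $L_i$ embeds in $B$'' becomes: every finite prime $\mathfrak{p} \in \Ram_f(B)$ unramified in $L/k$ must satisfy $\Frob_\mathfrak{p} = g_0$, where $g_0$ is the unique element of $G$ non-trivial in every coordinate; and $B$ may be ramified at a real place of $k$ only if that place is complex in every $L_i$. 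Write $\mathcal{A}$ for the (density $2^{-r}$) set of eligible finite primes and $\mathcal{S}_\infty$ for the finite set of eligible real archimedean places. Quaternion algebras $B$ over $k$ satisfying the embedding hypotheses then correspond bijectively to even-cardinality subsets $T = T_f \cup T_\infty$ with $T_f \subset \mathcal{A}$ and $T_\infty \subset \mathcal{S}_\infty$.

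Second I would analyze the Dirichlet series
\[
F(s) \;=\; \prod_{\mathfrak{p}\in\mathcal{A}}\!\bigl(1 + N(\mathfrak{p})^{-s}\bigr).
\]
Taking the logarithm and expanding $\log(1+t) = t + O(t^2)$, one is reduced to understanding $\sum_{\mathfrak{p}\in\mathcal{A}} N(\mathfrak{p})^{-s}$. Character orthogonality on $G$ combined with the identity $\sum_\mathfrak{p} \chi(\Frob_\mathfrak{p}) N(\mathfrak{p})^{-s} = \log L(s,\chi,L/k) + O(1)$, together with the fact that $L(s,\chi,L/k)$ is entire and non-vanishing at $s=1$ for non-trivial $\chi$ while the trivial character contributes $\log\zeta_k(s)$, yields
\[
\sum_{\mathfrak{p}\in\mathcal{A}} N(\mathfrak{p})^{-s} \;=\; \frac{1}{2^r}\,\log\frac{1}{s-1} \;+\; O(1) \qquad (s \to 1^+).
\]
Exponentiating gives $F(s) = (s-1)^{-1/2^r} H(s)$ with $H$ holomorphic and non-vanishing in a neighborhood of $s=1$, admitting the continuation to a zero-free region to the left of $\Re(s)=1$ required for Tauberian input.

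Third I would apply the Tauberian theorem of Delange (the same engine used in the proof of Theorem \ref{thm:area1}) to conclude
\[
\#\{T_f \subset \mathcal{A} : N(T_f) \le y\} \;\sim\; \frac{H(1)}{\Gamma(1/2^r)} \cdot \frac{y}{(\log y)^{1-1/2^r}}.
\]
The archimedean data and the even-parity constraint on $\Ram(B)$ are then packaged as follows: summing over $T_\infty \subseteq \mathcal{S}_\infty$ and restricting $|T_f|$ to have matching parity introduces only a constant factor, since the even/odd parity counts may be separated using $(F(s) \pm G(s))/2$ where $G(s) = \prod_{\mathfrak{p}\in\mathcal{A}}(1 - N(\mathfrak{p})^{-s})$ is regular at $s=1$ and thus gives a lower-order contribution. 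Translating via the discriminant convention $|\disc(B)| = N(\mathfrak{d}_B)^2$ (so that $N(T_f) \le x^{1/2}$) then produces the stated asymptotic $\sim \delta \cdot x^{1/2}/(\log x)^{1-1/2^r}$, with $\delta$ explicitly assembled from $H(1)$, $\Gamma(1/2^r)$, the parity factor, and the number of admissible archimedean ramification patterns.

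The principal technical obstacle is the Dirichlet-series bookkeeping in the second step: one must isolate the exact pole order $1/2^r$ by combining contributions from primes ramified in $L/k$, the $O(N(\mathfrak{p})^{-2s})$ tails of the logarithmic expansion, and the non-trivial Artin $L$-functions into a single holomorphic, non-vanishing factor $H(s)$, and then verify the analytic hypotheses of Delange's theorem. Once this is in place, tracking the explicit constant $\delta$ and reconciling it with the paper's discriminant normalization is routine.
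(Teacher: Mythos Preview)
Your strategy is essentially the paper's: parametrize quaternion algebras by even-cardinality ramification sets, use the Albert--Brauer--Hasse--Noether embedding criterion to restrict to primes that are non-split in every $L_i$, split off the parity condition via $\tfrac{1}{2}\bigl(\prod(1+\cdot)\pm\prod(1-\cdot)\bigr)$, and feed the plus-product into Delange's Tauberian theorem to extract $x^{1/2}/(\log x)^{1-1/2^r}$. The paper does exactly this, with the same character-orthogonality computation to exhibit the $(s-\tfrac12)^{-1/2^r}$ singularity.

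The one place your sketch and the paper diverge is the treatment of the minus-product $G(s)=\prod_{\mathfrak p\in\mathcal A}(1-N(\mathfrak p)^{-s})$. You assert that $G$ is ``regular at $s=1$'' and therefore contributes a lower-order term. This is not quite right: the same logarithmic computation you did for $F$ shows $G(s)\sim c\,(s-1)^{+1/2^r}$ as $s\to 1^+$, so $G$ has a branch-point zero, not an analytic one. In particular $G(s)(s-1)^{1/2^r}$ is \emph{not} analytic at $s=1$ when $r\ge 2$, so you cannot simply fold $G$ into the $A(s)$ or $B(s)$ of Delange's hypothesis (iv); nor can you apply Delange to $G$ alone, since its coefficients are not nonnegative. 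The paper sidesteps this analytic-continuation issue entirely by switching to a real-variable argument: it invokes Wirsing's theorem on mean values of multiplicative functions (Proposition~\ref{prop:wirsing}), comparing the coefficient sequences $a(N)$ and $b(N)$ of the plus- and minus-products and using $|b(N)|\le a(N)$ together with the Chebotarev asymptotic $\sum_{p\le x}a(p)\log p\sim 2^{-r}x$ (and $\sum b(p)\log p\sim -2^{-r}x$) to conclude $\sum_{N\le x}b(N)=o\bigl(\sum_{N\le x}a(N)\bigr)$ directly. This is the missing ingredient in your outline; once you supply it (or an equivalent argument), the rest of your proof goes through and matches the paper's.
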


In this proof, we make use of the well-developed theory of sums of nonnegative multiplicative functions due to Wirsing in order to obtain a precise asymptotic for our counting function. We highlight an explicit value of the constant $\delta$ in the case where $r=1$ below.

\begin{ex0}\label{ex:quatembeddingexample}
When $r=1$, the expression for $\delta$ can be put in compact form. We find that the number of quaternion algebras $B/k$ that admit an embedding of a fixed quadratic extension $L/k$ is
\[ \sim 2^{r_1'-\frac{1}{2}} \left(\frac{\kappa}{L(1,\chi)}\right)^{1/2} \cdot \prod_{\substack{\pp \text{ finite}\\ \pp\text{ not split}}} \left(1-\frac{1}{\abs{\pp}^{2}}\right)^{1/2} \cdot  \prod_{\substack{\pp \text{ finite}\\ \pp\text{ ramified}}} \left(1+\frac{1}{\abs{\pp}}\right)^{1/2} \cdot \frac{x^{1/2}}{(\log{x})^{1/2}} \quad \text{ as } x\to\infty. \]	
Here $\kappa$ is the residue at $s=1$ of $\zeta_k(s)$, and $L(1,\chi)$ is the value at $s=1$ of the nontrivial Artin $L$--function associated to the extension $L/k$.
\end{ex0}

\subsubsection{Geometric}

The above algebraic counting results have geometric companions. We briefly supply some additional, independent motivation before stating our geometric counting applications. Basic problems like counting arithmetic manifolds of a bounded volume modeled on a fixed symmetric space involve two distinct mechanisms for growth: the growth rate coming from a fixed commensurability class and the growth rate of the number of distinct commensurability classes. Several papers have been written on the growth rate of (arithmetic) lattices in a fixed Lie group and also manifolds modeled on a fixed symmetric space; see \cite{Bel07}, \cite{BGLS}, \cite{BL12}, \cite{BGLM02}, \cite{GG08}, \cite{GLNP04}, \cite{GLP04}, \cite{Gol12}, \cite{LL08}, \cite{LS04}, and \cite{LN04}. Our counting results focus on counting commensurability classes of manifolds with some prescribed features, or counting commensurability classes of geodesics or totally geodesic submanifolds in a fixed manifold.





Our first two results provide upper bounds for the number of commensurability classes of arithmetic hyperbolic 2-- or 3--manifolds with a fixed trace field. In the statement of these results, the volume $V_\mathcal{C}$ of a commensurability class $\mathcal{C}$ is the minimum volume achieved by its members. That this volume is achieved in a commensurability class follows from Borel \cite{borel-commensurability}. Belolipetsky \cite[\S 4]{Bel07} gave a polynomial upper bound for the number of commensurability classes of irreducible arithmetic lattices in a fixed isotypic semisimple Lie group $H$ arising from a fixed number field $k$ under the assumption that the simple factors are not of type $\mathrm{A}_1$. When $H$ has type $\mathrm{A}_1$ factors (i.e.~$H$ is isogenous with $\SL(2,\R)^a \times \SL(2,\C)^b$), Belolipetsky--Gelander--Lubotzky--Shalev \cite[\S 3]{BGLS} gave a polynomial upper bound for the number of commensurability classes of arithmetic lattices in $H$ arising from a fixed number field $k$. Varying $k$, both \cite{Bel07}, \cite{BGLS} provided super-polynomial upper bounds for the number of classes without restriction on $k$. Our first results give upper bounds for the number of commensurability classes of arithmetic hyperbolic $2$-- or 3--manifolds with trace field $k$. These upper bounds are explicit refinements of the upper bounds from \cite{BGLS} in these cases.


\begin{cor}\label{cor:area1cor2}
Let $k$ be a totally real number field of degree $n_k$ and let $N_k(V)$ be the number of commensurability classes $\scrC$ of arithmetic hyperbolic 2--orbifolds with trace field $k$ and $V_\scrC\leq V$. Then $N_k(V)\ll \frac{\kappa 2^{n_k-1} V^{130}}{\zeta_k(2)}$ for sufficiently large $V$, where $\zeta_k(s)$ is the Dedekind zeta function of $k$ and $\kappa$ is the residue of $\zeta_k(s)$ at $s=1$.
\end{cor}


\begin{cor}\label{cor:area1cor1}
Let $k$ be a number field of degree $n_k$ with a unique complex place and let $N_k(V)$ be the number of commensurability classes $\scrC$ of arithmetic hyperbolic $3$--orbifolds with trace field $k$ and $V_\scrC\leq V$. Then $N_k(V)\ll \frac{\kappa 2^{n_k-3} V^7}{\zeta_k(2)}$ for sufficiently large $V$, where $\zeta_k(s)$ is the Dedekind zeta function of $k$ and $\kappa$ is the residue of $\zeta_k(s)$ at $s=1$.
\end{cor}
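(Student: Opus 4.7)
The plan is to parametrize commensurability classes by quaternion algebras over $k$ and then count those algebras with bounded discriminant via a Dirichlet-series argument. The first ingredient is the standard classification of arithmetic hyperbolic $3$-manifolds: each commensurability class $\scrC$ corresponds bijectively to a pair $(k,B)$, where $k$ is the invariant trace field (a number field with exactly one complex place) and $B$ is a quaternion algebra over $k$ ramified at every real place of $k$. Since $k$ is fixed in the statement, the task reduces to counting isomorphism classes of such $B$ with $V_\scrC\leq V$.

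Next, I would invoke Borel's volume formula. For the principal arithmetic subgroup $\Gamma_{\mathcal{O}}$ attached to a maximal order $\mathcal{O}\subseteq B$,
\[ \mathrm{covol}(\Gamma_{\mathcal{O}}) = \frac{|d_k|^{3/2}\zeta_k(2)}{(4\pi^2)^{n_k-1}}\prod_{\mathfrak{p}\in\Ram_f(B)}(N(\mathfrak{p})-1), \]
and the minimum covolume $V_\scrC$ in the commensurability class differs from this by a bounded index coming from normalizers of maximal orders in $B$. Since $k$ is fixed, $|d_k|$, $\zeta_k(2)$, $h_k$, and the unit rank all contribute only to an implicit constant. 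A careful analysis of the normalizer index, together with the Mertens-type estimate for $\prod(1-1/N(\mathfrak{p}))$ over the ramified primes, then converts the hypothesis $V_\scrC\leq V$ into a bound of the form $|\disc(B)|\ll_k V^7$; the exponent $7$ absorbs the worst-case loss in the index and the Mertens factor.

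Finally, I would count quaternion algebras $B/k$ with $|\disc(B)|\leq X$ and the prescribed infinite ramification. By the Albert--Brauer--Hasse--Noether classification, such $B$ correspond bijectively to squarefree integral ideals $\frakm\subseteq\mathcal{O}_k$ whose number of prime divisors has a fixed parity (determined by requiring the total number of ramified places of $B$, including the $n_k-2$ prescribed real ones, to be even). The Dirichlet series for squarefree integral ideals of $\mathcal{O}_k$ is
\[ \sum_{\mathfrak{a}\text{ squarefree}}\frac{1}{N(\mathfrak{a})^s} = \frac{\zeta_k(s)}{\zeta_k(2s)}, \]
and a Delange-type Tauberian theorem (parallel to what is used for Theorem~\ref{thm:area1}) then gives that the count of squarefree ideals of norm $\leq X$ is asymptotic to $\kappa X/\zeta_k(2)$. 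The parity constraint removes at most half of these, and the factor $2^{n_k-3}$ in the stated bound absorbs this parity loss together with slack from bookkeeping at the infinite places.

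The main obstacle will be the passage $V_\scrC\leq V \Rightarrow |\disc(B)|\ll_k V^7$. Borel's normalizer index can fluctuate with the ramification data, and obtaining a uniform power-of-$V$ bound on $|\disc(B)|$ requires careful arithmetic bookkeeping at each ramified prime. Once that bound is in hand, the squarefree-ideal counting step is classical and the final asymptotic estimate drops out immediately.
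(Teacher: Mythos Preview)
Your outline is essentially the paper's proof: reduce to counting quaternion algebras over the fixed field $k$ via Theorem~\ref{CommensurabilityInvariants}, bound $|\disc(B)|$ in terms of $V$, and then count algebras with bounded discriminant. The paper packages the second step as Lemma~\ref{lem:Bdiscbound} and the third as the $n=2$ case of Theorem~\ref{thm:area1}; your direct squarefree-ideal count via $\zeta_k(s)/\zeta_k(2s)$ is exactly what that theorem specializes to, and the constant $\delta_2 = \kappa\,2^{r_1-1}/\zeta_k(2) = \kappa\,2^{n_k-3}/\zeta_k(2)$ is where the explicit factor in the stated bound comes from.

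One point to correct: your explanation of the exponent $7$ is off. Once $k$ is fixed, Borel's normalizer index $[k_B:k]$ is bounded by $2^{r_1}h_k$, and the Mertens-type loss $\prod_{\pp\mid\disc(B)}(1-1/|\pp|)^{-1}$ is only logarithmic in $|\disc(B)|$; neither produces a genuine power of $V$. If you absorb all $k$-dependent quantities into the implied constant as you propose, you actually obtain $|\disc(B)|\ll_k V$ (up to logarithms), which is stronger than needed. The exponent $7$ in the paper's Lemma~\ref{lem:Bdiscbound} arises instead from making the discriminant bound \emph{uniform in $k$}: one uses $n_k\leq 23+\log V$ (from \cite{chinburg-smallestorbifold}) to control the factor $(24\pi^2)^{n_k}$ appearing in Borel's formula, and this is what introduces the extra power of $V$. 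So the step you flagged as the main obstacle is in fact routine once $k$ is fixed; the only subtlety is deciding whether the implied constant in the corollary is meant to be absolute, in which case you do need the uniform argument of Lemma~\ref{lem:Bdiscbound}.
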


There is one commensurability class of non-compact arithmetic hyperbolic $2$--manifolds and for non-compact arithmetic 3--manifolds, the commensurability classes are in bijection with the imaginary quadratic number fields. In particular, for a fixed $k$, they do not affect the growth of $N_k(V)$.

We say (complex) geodesic lengths $\ell_1,\ell_2$ are \textbf{rationally inequivalent} if $\ell_1/\ell_2 \notin \Q$. Our next result provides a lower bound for the number of rationally inequivalent geodesics of bounded length.

\begin{cor}\label{cor:GeoCount}
Let $M$ be an arithmetic hyperbolic 2--manifold (resp., 3--manifold) of covolume $V$ with invariant trace field $k$ and invariant quaternion algebra $B$. Then for sufficiently large $V$ and $x$, $M$ contains at least $\brac{ \frac{\kappa_k}{2} \pr{ \frac{3}{\pi^2}}^{n_k}}x$ rationally inequivalent geodesics of length at most $e^{cV}x^{n_k}$ where $c$ is an absolute, effectively computable constant.
\end{cor}

An alternative form of this inequality is that there are at least $\brac{\frac{\kappa_k}{2}\pr{\frac{3}{\pi^2}}^{n_k} e^{-cV/n_k}} \ell^{1/n_k}$ rationally inequivalent geodesics of length at most $\ell$ provided that $V$ and $\ell$ are sufficiently large. By Huber \cite{Huber}, Margulis \cite{Margulis}, the asymptotic growth rate for the number of primitive geodesics of length at most $\ell$ is $\frac{e^{h\ell}}{h\ell}$ where $h$ is the entropy of the geodesic flow.

Our final geometric counting result provides a lower bound for the growth rate of incommensurable totally geodesic surfaces of bounded area in an arithmetic hyperbolic 3--manifold that contains at least one totally geodesic surface.

\begin{thm}\label{thm:infinitelymanytotallygeodesics}
Let $M=\mathbf{H}^3/\Gamma$ be an arithmetic hyperbolic $3$--orbifold of volume $V$ with invariant trace field $k$ and invariant quaternion algebra $B$. If $M$ contains a totally geodesic surface, then for sufficiently large $x$, $M$ contains at least $\brac{c(k)\disc(B)^{1/2}}x/\log(x)^{1/2}$ pairwise incommensurable totally geodesic surfaces with area at most $\brac{2\pi^2e^{cV}}x$. Here $c(k)$ is a constant depending only on $k$ and $c$ is an absolute, effectively computable constant.
\end{thm}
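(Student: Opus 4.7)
The plan is to translate the geometric counting problem into an algebraic one about quaternion algebras over the totally real subfield $k_0 \subset k$ of index $2$, and then count those algebras using a Wirsing-style estimate for sums of nonnegative multiplicative functions, in the same spirit as the proof of Theorem \ref{thm:quatembedding}. Since $M$ contains a totally geodesic surface, a standard fact (Maclachlan--Reid; used also in \cite{McReid}) identifies commensurability classes of arithmetic totally geodesic surfaces in $M$ with isomorphism classes of quaternion algebras $B_0$ over $k_0$ such that (i) $B_0\otimes_{k_0} k \cong B$ and (ii) $B_0$ is ramified at every real place of $k_0$ except one. Two such $B_0$'s give incommensurable surfaces. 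So it suffices to produce many such $B_0$ of small reduced discriminant.

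Next I would translate the area bound into a discriminant bound. Borel's volume formula for maximal arithmetic Fuchsian groups gives
\[ \mathrm{Area}(S) = \frac{8\pi\,\zeta_{k_0}(2)\, d_{k_0}^{3/2}}{(4\pi^2)^{n_{k_0}}} \prod_{\pp\mid \mathfrak{d}(B_0)} (N\pp - 1), \]
and the corresponding formula (Borel, Chinburg--Friedman) for $\Gamma$ gives $V \gg \zeta_k(2) d_k^{3/2}(4\pi^2)^{-n_k}\prod_{\qq\mid\mathfrak{d}(B)}(N\qq-1)$. Thus up to factors bounded effectively in terms of $V$ (absorbed into an $e^{cV}$), an area bound $\mathrm{Area}(S) \le 2\pi^2 e^{cV} x$ is equivalent to $N(\mathfrak{d}(B_0)) \le c'(k,B)\cdot x$ for an explicit constant $c'(k,B)$ into which the factor $\disc(B)^{1/2}$ is absorbed via the comparison between the factor $d_{k_0}^{3/2}\zeta_{k_0}(2)$ and the analogous quantities for $k$.

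Third, I would classify the admissible $B_0$'s via local data. By Albert--Brauer--Hasse--Noether, $B_0$ is determined by $\Ram(B_0)$. The condition $B_0\otimes_{k_0} k \cong B$ forces the ramification of $B_0$ at every prime $\pp$ of $k_0$ that splits in $k/k_0$ (ramified iff the places above are ramified in $B$), while at every prime $\pp$ nonsplit in $k/k_0$ and lying under an unramified place of $B$ the choice of ramification is free (at primes nonsplit in $k/k_0$ below a ramified place of $B$ the hypothesis that at least one $B_0$ exists rules the case out). Writing $\mathfrak{d}(B_0)=\mathfrak{d}_{\text{forced}}\cdot \mathfrak{d}_{\text{free}}$, the freedom is precisely a squarefree ideal of $k_0$ supported on the ``free'' primes, with a parity constraint (even total ramification) that can be absorbed into the pinned real place. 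By Chebotarev, the free primes have Dirichlet density $1/2$ in $k_0$. Applying the Wirsing/Landau--Selberg--Delange machinery of the paper's Theorem \ref{thm:quatembedding} (specialized to a density-$1/2$ support condition on a squarefree count) yields
\[ \#\{\mathfrak{d}_{\text{free}} : N(\mathfrak{d}_{\text{free}}) \le N\} \;\gg\; c(k)\, \frac{N}{(\log N)^{1/2}}, \]
with $c(k)$ expressible through $\kappa_{k_0}$, $\zeta_{k_0}(2)$, and the local densities at the ramified primes of $B$. Substituting the area-to-norm conversion from the previous step, with $N = c'(k,B)\cdot x$, and tracking how the $N(\mathfrak{d}_{\text{forced}})$ contribution pulls out the factor $\disc(B)^{1/2}$, completes the proof.

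The main obstacle is keeping every constant effective and separating the $V$-dependence (which lives in the factor $e^{cV}$ with absolute $c$) from the $k$-dependence (the constant $c(k)$). The effective version of the density theorem for free primes, effective lower bounds for $\kappa_{k_0}$ and upper bounds for $\zeta_{k_0}(2)$ and $d_{k_0}$ in terms of $V$ (via the Borel minimum-covolume bound for $\Gamma$), and an effective form of Wirsing's theorem applied to a density-$1/2$ set of primes are the ingredients that must be assembled with care; bookkeeping the parity constraint and the forced-discriminant factor so that the final lower bound is cleanly of shape $c(k)\disc(B)^{1/2}\cdot x/(\log x)^{1/2}$ is where most of the technical work will concentrate.
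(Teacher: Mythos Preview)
Your overall strategy matches the paper's: reduce to counting quaternion algebras $B_0$ over the totally real index-$2$ subfield with $B_0\otimes k\cong B$, count these by a density-$\tfrac12$ modification of Theorem \ref{thm:quatembedding} (the $r=1$ case), and convert the discriminant bound to an area bound. The paper's proof is exactly this, invoking Proposition \ref{proposition:ggsprop} for the area step and citing ``a slight modification to the proof of Theorem \ref{thm:quatembedding}'' for the count.

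There is, however, a genuine gap in your second paragraph. The factor $e^{cV}$ does \emph{not} arise from comparing the Fuchsian and Kleinian Borel formulas. Borel's formula gives the coarea of $\Gamma_{\calO_0}$, but $\Gamma_{\calO_0}$ need not be a subgroup of $\Gamma=\pi_1(M)$ at all, so it does not yet produce a surface in $M$. The paper's Proposition \ref{proposition:ggsprop} fixes this by choosing a maximal order $\calO$ of $B$ containing $\calO_0$, then taking $\Delta=\Gamma_{\calO_0}\cap\Gamma^{(2)}\subset\Gamma$ and bounding $[\Gamma_{\calO_0}:\Delta]\le[\Gamma:\Gamma^{(2)}]\le 2^{d(\Gamma)}\le e^{cV}$ via Gelander's rank--volume inequality (Theorem \ref{RankVolumeInequality}, packaged as Proposition \ref{prop:derivedindex}). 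That index bound is the entire content of the $e^{cV}$; your comparison of $d_{k_0}^{3/2}\zeta_{k_0}(2)$ with the Kleinian data does not supply it, and without it you have surfaces in the commensurability class of $M$ but not in $M$ itself.

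A second, minor point: you are overworking the effectivity. The paper only claims that the constant $c$ in $e^{cV}$ is absolute and effectively computable; the constant $c(k)$ is simply the positive leading constant produced by Delange's Tauberian theorem, with no effectivity asserted. So you do not need effective Chebotarev, effective bounds on $\kappa_{k_0}$, or an effective Wirsing theorem here.
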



Theorem \ref{thm:infinitelymanytotallygeodesics} in tandem with Theorem \ref{EffectiveMcReid} gives an estimate on the number of surfaces needed to distinguish a pair of incommensurable, arithmetic hyperbolic 3--manifolds with a totally geodesic surface. We prove that if an arithmetic hyperbolic 3--manifold contains a totally geodesic surface then in fact it contains a totally geodesic surface with area bounded above by data from the manifold (see Proposition \ref{proposition:ggsprop}).

\subsection{Layout}

In \S 2 we introduce some of the basic concepts, terms, and objects for the paper. In \S 3 we prove the main algebraic counting results. In \S 4 we prove the main geometric counting results. In \S 5 we prove the effective results on geodesic lengths while in \S 6 we prove the results involving surfaces, including the asymptotic results on incommensurable, totally geodesic surfaces.


\paragraph*{\textbf{Acknowledgements.}}

The authors thank Jayadev Athreya, Richard Canary, Ted Chinburg, Britain Cox, Peter Doyle, Daniel Fiorilli, Tsachik Gelander, Grant Lakeland, Chris Leininger, Jeff Meyer, Nick Miller, Gopal Prasad, Alan Reid, and Matthew Stover for conversations on the material in this article. We also thank the anonymous referee for detailed comments on an earlier version that corrected a mistake in Theorem 1.1 and helped improve the exposition. BL was partially supported by NSF RTG grant DMS-1045119 and an NSF Mathematical Sciences Postdoctoral Fellowship. DBM was partially supported by NSF grant DMS-1105710. PP was partially supported by NSF grant DMS-1402268. LT was partially supported NSF VIGRE grant DMS-0738586 and by an AMS Simons Travel Grant.

\section{Background}\label{Background}

\paragraph*{\textbf{Notation}}

The following notation is utilized throughout this article.

\begin{itemize}
\item $\N,\Z,\Q,\R,\C$ are the natural numbers, integers, and rational, real, and complex fields. $\varphi(n)$ is the Euler totient function, $\mu(n)$ is the M\"{o}bius function, and $\log(x)$ is the natural logarithm function.
\item $k$ is a number field and $L/k$ is a finite extension. $\calO_k,\calO_k^\ast,\calO_k^1$ are the ring of integers, the group of units, and the group of norm 1 elements, respectively. $\widehat{k}$ is the Galois closure of $k$ and $k^+$ is the maximal, totally real subfield of $k$. $d_k$ is the absolute discriminant, $\Reg_k$ is regulator for $k$, and $h_k$ is the class number of $k$
\item $\Pp_k$ is the set of places/primes of $k$. For $\pp \in \Pp_k$, we denote the norm by $\abs{\pp}$ and the associated valuation by $\abs{\cdot}_\pp$. For a place $\mathfrak{P} \in \Pp_L$ residing over a place $\pp \in \Pp_k$, we denote this by $\mathfrak{P} \mid \pp$ or simply $\mathfrak{P} \mid_\pp$. Occasionally, $\mathfrak{P} \mid_k$ will denote the prime $\pp \in \Pp_k$ that $\mathfrak{P}$ is over.
\item $n_k$ is the degree of $k/\Q$. $r_1(k)$ and $r_2(k)$, or simply $r_1,r_2$, are the number of real and complex places.
\item $\mathfrak{f}_{L/k}$ is the conductor. $\Delta_{L/k}$ is the relative discriminant of an extension $L/k$. $\zeta_k(s)$ is the Dedekind $\zeta$--function and $\kappa_k$ is the residue of the pole of $\zeta_k$ at $s=1$. $J_k$ is the id\`ele group for $k$.
\item $D/k$ is a division algebra over $k$. $B/k$ is a quaternion algebra over $k$. $A/k$ is a central simple algebra over $k$ with norm $\nr$, group of invertible elements $A^\times$, group of norm 1 elements $A^1$, and discriminant $\disc(A)$.
\item $\Ram(A) \su \Pp_k$ is the set of ramified places of $A$, $\Ram_f(A)$ is the set of finite ramified places of $A$, $\Ram_\iny(A)$ is the set of infinite ramified places of $A$, and $r_A=\abs{\Ram_\iny(A)}$. 
\item For a quaternion algebra $B$, $k_B$ is the maximal abelian extension of $k$ which has $2$--elementary Galois group, is unramified outside of the real places in $\Ram(B)$, and in which every $\pp \in \Ram_f(B)$ splits completely.
\item $\calO, \calE,\calD$ are $\mathcal{O}_k$--orders in a central simple $k$--algebra. For each place $\pp \in \Pp_k$, $\calO_\pp$ is the completion of $\calO$ at $\pp$.
\item $\mathbf{H}^2,\mathbf{H}^3$ are real hyperbolic 2-- and 3--spaces. $M$ is an arithmetic hyperbolic 2-- or 3--manifold and $\Gamma= \pi_1(M) < \PSL(2,\R)$ or $\PSL(2,\C)$ is the associated arithmetic lattice. When $\Gamma = P\rho(\calO^1)$, we write $\Gamma = \Gamma_\calO$.
\item $c,C$ and variously decorated versions are constants. We interchangeably use the Landau ``Big Oh'' notation, $f =O(g)$, and the Vinogradov notation, $f \ll g$, when there exists a constant $C > 0$ such that $|f| \leq C|g|$. $f \sim g$ if $\lim_{x \rightarrow \infty} \frac{f(x)}{g(x)} = 1$ and $f = o(g)$ if $\lim_{x \rightarrow \infty} \frac{f(x)}{g(x)} = 0.$
\end{itemize}


\subsection{Algebraic}

We refer the reader to \cite{CF}, \cite{Lang-ANT}, \cite{Marcus}, \cite{Pierce}, and \cite{Reiner} for the below material.

\subsubsection{Central simple algebras}
\quad

One main algebraic requisite for later discussion is the theory of central simple algebras $A/k$ and their orders. By the Artin--Wedderburn Structure Theorem \cite[p.~49]{Pierce}, every such $A$ is isomorphic to a matrix algebra over a division algebra $A=\MM(r,D)$ where $r$ and $D$ are uniquely determined. We require the following theorem in this paper.

\begin{thm}\label{lem:allDAs}
Let $k$ be a number field. Let $S$ be a finite collection of primes of $k$ consisting of finite primes and real infinite places. Suppose that for each $\pp \in S$ we are given a reduced fraction $\frac{a_\pp}{m_\pp} \in \Q \cap (0,1)$ such that
\begin{enumerate}
\item
$m_\pp > 1$ and $a_\pp >0$,
\item
$\frac{a_\pp}{m_\pp}=\frac{1}{2}$ whenever $\pp$ is real,
\item
$\sum_{\pp \in S} \frac{a_{\pp}}{m_{\pp}} \in \Z$.
\end{enumerate}
There is a unique division algebra $D/k$ possessing $S$ as its set of ramified primes and with Hasse invariants $\frac{a_{\pp}}{m_{\pp}}$ for $\pp \in S$. Conversely, every division algebra $D/k$ arises in this way. The dimension of $D$ is $n^2$, where $n := \lcm_{\pp \in S}[m_\pp]$. The discriminant of $D$ is the modulus of $k$ given by
\[ \disc(D) = \prod_{\substack{\pp \in S \\ \pp \text{ real}}}\pp \prod_{\substack{\pp \in S \\ \pp \text{ finite}}} \pp^{n^2(1-\frac{1}{m_\pp})}.  \]
\end{thm}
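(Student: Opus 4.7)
My plan is to deduce this classification directly from the Albert--Brauer--Hasse--Noether (ABHN) fundamental exact sequence
$$0 \longrightarrow \Br(k) \longrightarrow \bigoplus_{\pp \in \Pp_k} \Br(k_\pp) \xrightarrow{\sum \inv_\pp} \Q/\Z \longrightarrow 0,$$
which I would cite as a black box from class field theory. Here $\Br(k_\pp) \cong \Q/\Z$ for finite $\pp$, $\Br(k_\pp) \cong \tfrac12\Z/\Z$ for real $\pp$, and $\Br(k_\pp) = 0$ for complex $\pp$, with the last map summing local Hasse invariants. For the existence/uniqueness statement I would observe that conditions (1)--(2) guarantee the tuple $(a_\pp/m_\pp)_{\pp \in S}$ (extended by zero outside $S$) is a well-defined element of $\bigoplus_\pp \Br(k_\pp)$: condition (2) is exactly the restriction at real places, and condition (1) together with the reduced form of $a_\pp/m_\pp$ records a nonzero invariant precisely at each $\pp\in S$. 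Condition (3) places this element in the kernel of the summation map, so by exactness it lifts to a unique class $[A]\in\Br(k)$; by Wedderburn, $[A]$ contains a unique division-algebra representative $D$, whose ramification locus is $S$ with the prescribed invariants by construction. Conversely, every division algebra $D/k$ produces such data $(\inv_\pp(D_\pp))_\pp$ satisfying (1)--(3), yielding the bijection.

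For the dimension formula I would invoke the classical consequence of ABHN (via the Grunwald--Wang theorem) that index equals exponent in $\Br(k)$ for a number field $k$. The exponent of $[A]$ is the lcm of the orders of its local invariants, which is $n=\lcm_{\pp\in S}(m_\pp)$; hence $\sqrt{\dim_k D}=n$, giving $\dim_k D=n^2$. For the discriminant, I would factor it into local contributions coming from a maximal $\calO_k$-order. At every unramified prime (all $\pp\notin S$, including complex places), the local algebra is split and a local maximal order is conjugate to $\MM(n,\calO_{k_\pp})$ with trivial discriminant. At a finite ramified prime with invariant $a_\pp/m_\pp$, the local algebra decomposes as $\MM(n/m_\pp, D'_\pp)$, where $D'_\pp$ is the unique local division algebra of index $m_\pp$ over $k_\pp$. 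Using that the unique maximal order $\calD'_\pp\subset D'_\pp$ is a free $\calO_{k_\pp}$-module of rank $m_\pp^2$ whose reduced different is $\frakp^{m_\pp-1}$, a direct calculation (propagating through the matrix factor $\MM_{n/m_\pp}$) produces the local contribution $\pp^{n^2(1-1/m_\pp)}$. Real ramified places contribute $\pp$ by convention, completing the product formula.

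The principal obstacle is conceptual rather than computational: essentially all of the substance is packaged into the ABHN exact sequence, which I would not reprove. Modulo that, existence/uniqueness and the formula $n=\lcm(m_\pp)$ are formal consequences, and the discriminant formula is a standard local bookkeeping. The main place to exercise care is the distinction between the reduced and full discriminants at ramified primes (the theorem uses the full discriminant, as seen in the quaternion case where $n=m_\pp=2$ yields the exponent $n^2(1-1/m_\pp)=2$), so I would pin down conventions before tabulating the local contributions.
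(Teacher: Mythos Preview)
Your proposal is correct and matches the paper's approach exactly: the paper does not give a detailed proof but simply states that the theorem ``is a consequence of the Albert--Brauer--Hasse--Noether theorem (see for instance \cite[Section 18.4]{Pierce}) and more generally, the short exact sequence of Brauer groups appearing in local class field theory.'' Your write-up is in fact more detailed than what the paper provides, spelling out the index-equals-exponent step and the local discriminant computation that the paper leaves implicit.
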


This theorem is a consequence of the Albert--Brauer--Hasse--Noether theorem (see for instance \cite[\S 18.4]{Pierce}) and more generally, the short exact sequence of Brauer groups appearing in local class field theory. Moreover, one can show that
\begin{align*}
\disc(A) &= \prod_{\substack{\pp\text{ real} \\ \pp\mid \disc(D)}} \pp \cdot \bigg(\prod_{\substack{\pp\text{ finite} \\ \pp\mid \disc(D)}} \pp\bigg)^{r^2} = \prod_{\substack{\pp \in S \\ \pp \text{ real}}}\pp \prod_{\substack{\pp \in S \\ \pp \text{ finite}}} \pp^{n^2(1-\frac{1}{m_\pp})} \\
\end{align*}
when $A = \MM(r,D)$. Thus,
\begin{equation}\label{eq:discAexpr}
\abs{\disc(A)} = \abs{\disc(D)}^{r^2} = \prod_{\substack{\pp \in S \\ \pp \text{ finite}}} \abs{\pp}^{n^2(1-\frac{1}{m_\pp})}.
\end{equation}
By Theorem \ref{lem:allDAs}, $D$ corresponds to certain Hasse invariants $a_{\pp}/m_{\pp}$ for $\pp \in \Pp$, and the dimension of $D$ over $k$ is $d^2$ for $d = \lcm_{\pp \in\Pp}[m_\pp]$. In the future, the Hasse invariants of $D$ will also be referred to as the \textbf{Hasse invariants of $A$}. If the dimension of $A$ over $k$ is $n^2$, then $r^2 d^2 = n^2$.
\subsubsection{Parametrizing maximal orders}

In what follows, $k$ will be a fixed number field and $B/k$ a fixed quaternion algebra. Our exposition follows \S 3--4 of \cite{linowitz-selectivity}. We refer the reader to \cite{Reiner} for a general treatment on orders.

Let $J_k$ (respectively $J_B$) denote the id\`ele group of $k$ (respectively $B$). In this context, the id\`ele group $J_B$ acts on the set of maximal orders of $B$ as follows. Given $\tilde{x}\in J_B$ and $\calO$ a maximal order of $B$ we define $\tilde{x}\calO\tilde{x}^{-1}$ to be the unique maximal order of $B$ with the property that for every finite prime $\pp$ of $k$, its completion at $\pp$ is equal to $x_\pp\calO_\pp x_\pp^{-1}$ (existence and uniqueness follow from the local-to-global correspondence for orders). With this action we see that the set of maximal orders corresponds to the coset space $J_B/\mathfrak{N}(\calO)$, where $\mathfrak{N}(\calO)=J_B\cap \prod_{\pp} N_{B_\pp^*}(\calO_\pp)$ and $N_{B_\pp^*}(\calO_\pp)$ is the normalizer in $B_\pp^*$ of $\calO_\pp^*$. The isomorphism classes  of maximal orders of $B$ (which by the Skolem--Noether theorem \cite[p.~230]{Pierce} coincide with conjugacy classes) thus correspond to points in the double coset space $B^*\backslash J_B/\mathfrak{N}(\calO)$. The reduced norm $\nr(\cdot)$ induces a bijection \cite[Thm 4.1]{linowitz-selectivity} between the latter double coset space and $k^*\backslash J_k/\nr(\mathfrak{N}(\calO))\cong J_k/k^*\nr(\mathfrak{N}(\calO))$. The latter group is finite and, as $J_k^2\subset \nr(\mathfrak{N}(\calO))$, is of exponent $2$. Hence, there exists an integer $m\geq 1$ such that the number of isomorphism classes of maximal orders is equal to $2^m$ and $J_k/k^*\nr(\mathfrak{N}(\calO))\cong \left(\Z/2\Z\right)^m$.

We now parameterize the maximal orders of $B$. Let $\pp_1,\dots,\pp_m$ be a set of primes of $k$ such that $B_{\pp_i}\cong \MM(2,k_{\pp_i})$ for all $i$ and such that the cosets of $J_k/k^*\nr(\mathfrak{N}(\calO))$  defined by the elements
\[ \set{e_{\pp_i}=(1,\dots,1,\pi_{\pp_i},1,\dots) }_{i=1}^m \]
form a generating set. For each prime $\pp_i$, let $\delta_i=\diag(\pi_{\pp_i},1)$ and $\calO_{\pp_i}^\prime=\delta_i\calO_{\pp_i}\delta_i^{-1}$. Given $\gamma=(\gamma_i)\in\left(\Z/2\Z\right)^m$, we define a maximal order $\calO^{\gamma}$ via the local-to-global correspondence:
\begin{displaymath}
\calO_\pp^\gamma = \left\{ \begin{array}{ll}
\calO_{\pp_i} & \textrm{if $\pp=\pp_i$ and $\gamma_i=0$}\\
\calO_{\pp_i}^\prime & \textrm{if $\pp=\pp_i$ and $\gamma_i=1$}\\
\calO_\pp & \textrm{otherwise.}
\end{array}\right.
\end{displaymath}
By \cite[Prop 4.1]{linowitz-selectivity}, every maximal order of $B$ is conjugate to one of the orders defined above. Henceforth we will refer to this as a \textbf{parameterization of the maximal orders of $B$ relative to $\calO$}.

Let $L/k$ be a quadratic extension and $k_B$ be the class field corresponding to $J_k/k^*\nr(\mathfrak{N}(\calO))$ by class field theory. Alternatively, $k_B$ can be characterized as the maximal abelian extension of $k$ which has $2$--elementary Galois group, is unramified outside of the real places in $\Ram(B)$ and in which every finite prime of $\Ram(B)$ splits completely. The following lemma appears as \cite[Lemma 3.7]{linowitz-selectivity}:

\begin{lem}\label{lem:generatingsetlemma}
Let the notation be as above.
\begin{enumerate}
\item If $L\not\subset k_B$ then there exists a generating set $\{e_{\pp_i}\}$ of $J_k/k^*\nr(\mathfrak{N}(\calO))$ in which all of the $\pp_i$  split in $L/k$.
\item If $L\subset k_B$ and $\qq$ is any prime of $k$ which is inert in $L/k$ then there exists a generating set $\{e_{\pp_i}\}$ of $J_k/k^*\nr(\mathfrak{N}(\calO))$ in which $\pp_1=\qq$ and $\pp_2,\dots,\pp_m$ all split in $L/k$.
\end{enumerate}	
\end{lem}

We conclude this section with a technical result which we will utilize in the proof of Theorem \ref{thm:effectiveCHLR}.

\begin{prop}\label{proposition:selectivecase}
Let $\calE,\calD$ be maximal orders of $B$ and suppose that $u\in\calE^1$ with $u\notin k$ and set $L= k(u)$.  Then there exists an absolute constant $C_1>0$ and a positive integer $n\leq d_L^{C_1}$ such that $\calD$ admits an embedding of $\calO_k[u^n]$.
\end{prop}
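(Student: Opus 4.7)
The plan is to combine the Chinburg--Friedman selectivity theorem for orders in quaternion algebras with the effective Chebotarev density theorem. The goal is to exhibit a power $u^n$, with $n$ bounded polynomially in $d_L$ where $L:=k(u)$, such that $\calO_k[u^n]$ has a conductor in $\calO_L$ divisible by a prime of $\calO_L$ lying over a non-split prime of $k$. By selectivity, any such order must embed into every maximal order of $B$, and in particular into $\calD$.

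Since $u\in\calE^1\setminus k$ satisfies $u^2-\trd(u)u+1=0$, the field $L:=k(u)$ is a quadratic extension of $k$ and $u\in\calO_L^\ast$. If $u$ happens to be a root of unity, then $u^n\in k$ for some small $n$, in which case $\calO_k[u^n]=\calO_k$ trivially embeds in $\calD$; I may therefore assume $u$ is not a root of unity. Apply the effective Chebotarev density theorem \cite{effectiveCDT} to the Galois extension $L/k$ to produce a prime $\pp\in\Pp_k$, unramified in $L/k$ with nontrivial Frobenius, of norm $\abs{\pp}\leq c_0 d_L^{C_0}$ for absolute constants $c_0,C_0$. Because $L/k$ is quadratic, the nontrivial Frobenius condition forces $\pp$ to be inert in $L/k$.

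The residue ring $\calO_L/\pp\calO_L$ is the finite field $\bfF_{\abs{\pp}^2}$, which contains $\calO_k/\pp\cong\bfF_{\abs{\pp}}$. Since the quotient $\bfF_{\abs{\pp}^2}^\ast/\bfF_{\abs{\pp}}^\ast$ is cyclic of order $\abs{\pp}+1$, taking $n=\abs{\pp}+1$ ensures that the reduction of $u^n$ lies in $\bfF_{\abs{\pp}}$, so $u^n\equiv a\pmod{\pp\calO_L}$ for some $a\in\calO_k$. Expressing $u^n=\alpha+\beta\omega$ in an $\calO_k$-basis $\{1,\omega\}$ of $\calO_L$, the congruence forces $\beta\in\pp$, and a standard calculation shows that the conductor of $\calO_k[u^n]$ in $\calO_L$ equals $\beta\calO_L$, which is therefore divisible by the prime ideal $\pp\calO_L$. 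Combining with the Chebotarev bound yields $n\leq d_L^{C_1}$ for some absolute constant $C_1$.

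Finally, invoke the Chinburg--Friedman selectivity theorem in conjunction with the parameterization of maximal orders reviewed in Section 2.1.2: a commutative $\calO_k$-order of $L$ embeds into every maximal order of $B$ unless $L\subset k_B$ and every prime of $k$ dividing its conductor splits completely in $k_B/k$. Since $\pp$ is inert in $L/k$, it fails to split in $L$, hence cannot split completely in $k_B\supset L$ when $L\subset k_B$; the exceptional case is thus avoided, and $\calO_k[u^n]$ embeds into $\calD$. The main obstacle is marshaling the selectivity step correctly: the conductor calculation must be set up precisely so that effective Chebotarev applied to $L/k$ simultaneously produces the inert prime forcing non-selectivity and the polynomial bound on $n$ in terms of $d_L$.
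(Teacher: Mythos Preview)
Your argument is correct and takes a genuinely different route from the paper's.

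The paper splits into cases according to whether $L\subset k_B$. When $L\not\subset k_B$ it takes $n=1$ via Chinburg--Friedman. When $L\subset k_B$ it invokes the explicit parameterization of maximal orders (Lemma~\ref{lem:generatingsetlemma}): it picks an inert prime $\qq$ of small norm, chooses the generating primes $\pp_1=\qq,\pp_2,\dots,\pp_m$ with $\pp_2,\dots,\pp_m$ split in $L/k$, observes that $u$ already lies in $\calE^\gamma_{\pp_i}$ at the split primes via the diagonal embedding, and then uses the Bruhat--Tits tree index $[\calE_\qq^1:\calE_\qq^1\cap{\calE_\qq'}^1]=\abs{\qq}(\abs{\qq}+1)$ to force $u^n\in\calE^\gamma_\qq$ as well. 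Thus $n=\abs{\qq}(\abs{\qq}+1)$ and the embedding into every $\calE^\gamma$ is verified by hand.

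Your approach instead works uniformly: you pick an inert prime $\pp$, take $n=\abs{\pp}+1$ so that $u^n\bmod\pp\calO_L$ lands in the prime subfield, and conclude that $\pp$ divides the conductor of $\calO_k[u^n]$. Then the selectivity criterion (condition (b) fails because an inert prime divides the relative discriminant) is quoted as a black box. This is slicker in that it avoids the case split and the explicit tree computation, and it even yields a slightly smaller exponent $n$. The paper's approach, by contrast, is more self-contained and makes the mechanism transparent without appealing to the full strength of the selectivity theorem.

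Two minor points in your write-up deserve tightening. First, $\calO_L$ need not be free over $\calO_k$, so the basis $\{1,\omega\}$ should be taken locally at $\pp$; the conductor computation then goes through verbatim, since divisibility by $\pp$ is a local condition. Second, the selectivity hypothesis in Chinburg--Friedman is that every prime dividing the relative discriminant of $\Omega$ splits in $L/k$, not in $k_B/k$; your phrasing is not quite the theorem as stated, but since you exhibit a prime inert in $L/k$, the correct condition is violated and the conclusion stands.
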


In the proof of Proposition \ref{proposition:selectivecase}, we require the following lemma, which is an immediate consequence of the bound on the least prime ideal in the Chebotarev density theorem \cite{effectiveCDT}.

\begin{lem}\label{lem:smallestinert}
If $L/k$ is a quadratic extension, then there exists an absolute, effectively computable constant $C_1$ such that there exists a prime of $k$ which is inert in $L/k$ and has norm less than $d_L^{C_1}$.
\end{lem}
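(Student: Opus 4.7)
The plan is to reduce the lemma directly to the effective Chebotarev density theorem as cited in \cite{effectiveCDT}. Since $L/k$ has degree $2$, the extension is automatically Galois with $\Gal(L/k) \cong \Z/2\Z$; let $\sigma$ denote its nontrivial element. A prime $\pp$ of $k$ unramified in $L/k$ is inert if and only if the Frobenius conjugacy class $\Frob_\pp \subset \Gal(L/k)$ equals $\{\sigma\}$ (if $\Frob_\pp$ is trivial, $\pp$ splits; if nontrivial, $\pp$ is inert). So the lemma is equivalent to producing a prime of $k$ with a prescribed Frobenius in $\Gal(L/k)$ and controlled norm, which is precisely what effective Chebotarev delivers.

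The unconditional effective Chebotarev density theorem of Lagarias and Odlyzko asserts that for any Galois extension $F/k$ of number fields and any conjugacy class $C \subset \Gal(F/k)$, there is an absolute, effectively computable constant $A_0 > 0$ and a prime $\pp$ of $k$, unramified in $F/k$, satisfying $\Frob_\pp \subset C$ and absolute norm $\N(\pp) \le d_F^{A_0}$, where $d_F$ is the absolute discriminant of $F$. Applying this in the particular case $F = L$ with $C = \{\sigma\}$ yields a prime $\pp$ of $k$ that is unramified in $L/k$, has nontrivial Frobenius, and satisfies $\N(\pp) \le d_L^{A_0}$. Such a prime is inert in $L/k$ by the preceding paragraph. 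Setting $C_1 := A_0$ finishes the argument.

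There is essentially no obstacle here beyond lining up conventions. The only care needed is to verify that the version of Chebotarev one quotes (i) is unconditional (the author's invocation of \cite{effectiveCDT} is consistent with this), (ii) controls the norm of a prime of $k$ (not of $L$) in terms of the absolute discriminant of $L$, and (iii) gives an absolute constant that does not depend on $k$ or $L$. All three features are standard in the Lagarias--Odlyzko formulation, so the statement of the lemma is obtained by specializing that theorem to the case of a quadratic Galois extension and reading off the norm bound.
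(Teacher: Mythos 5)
Your proposal is correct and matches the paper's own (very brief) justification: the paper explicitly states the lemma is "an immediate consequence of the effective Chebotarev density theorem \cite{effectiveCDT}," and your argument simply spells out that consequence by noting inertness in the quadratic extension $L/k$ is equivalent to nontrivial Frobenius, then invoking the Lagarias--Odlyzko bound $\N(\pp)\le d_L^{C_1}$.
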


\begin{proof}[Proof of Proposition \ref{proposition:selectivecase}]
If $L\not\subset k_B$, then the selectivity theorem of Chinburg--Friedman \cite{Chinburg-Friedman-selectivity} (see also \cite{linowitz-selectivity}) shows that $\calD$ admits an embedding of $\calO_k[u]$, hence we may take $n=1$. Suppose now that $L\subset k_B$ and let $\qq$ be a prime of $k$ of smallest norm which is inert in $L/k$. Let  $\{ e_{\pp_i} \}$ be the set of representatives of $J_k/k^*\nr(\mathfrak{N}(\calO))$ from Lemma \ref{lem:generatingsetlemma}(ii) (in which $\pp_1=\qq$ and $\pp_2,\dots,\pp_m$ all split in $L/k$). We claim that $\qq$ does not ramify in $B$. Indeed, suppose that $\qq$ ramified in $B$. By our characterization of $k_B$, it would follow that $\qq$ would split completely in $k_B$ and hence in $L$ as $L\subset k_B$. However, this observation contradicts the fact that $\qq$ is inert in $L/k$, proving our claim.

For each $i=2,\dots,m$, we have an $k_{\pp_i}$--isomorphism $f_{\pp_i}\colon B_{\pp_i}\to\MM(2,k_{\pp_i})$ such that $f_{\pp_i}(L)\subset  \begin{pmatrix} k_{\pp_i} & 0 \\ 0 & k_{\pp_i} \end{pmatrix}$. Consequently, $f_{\pp_i}(\calO_L)\subset  \begin{pmatrix} \calO_{k_{\pp_i}} & 0 \\ 0 & \mathcal O_{k_{\pp_i}} \end{pmatrix}$, and so $\calO_k[u]$ is contained in two adjacent vertices in the tree of maximal orders of $\MM(2,k_{\pp_i})$. Upon conjugating $\calE$ if necessary, we may assume that $\{\calE^{\gamma}\}$ is a parameterization of the maximal orders of $B$ relative to $\calE$. Additionally, we have $u\in\calE^{\gamma}_{\pp_i}$ for all $\gamma$ and $i=2,\dots,m$. By construction $\calE_{\qq},\calE_{\qq}^\prime$ are adjacent in the tree of maximal orders of $\MM(2,k_{\qq})$. By \cite[p.~340]{MR}, we have $[\calE_{\qq}^1:\calE_{\qq}^1\cap {\calE_{\qq}^\prime}^1 ]=\abs{\qq}(\abs{\qq}+1$). Setting $n=\abs{\qq}(\abs{\qq}+1)$, we have shown that $u^n\in\calE^{\gamma}_{\pp_i}$ for all $\gamma$ and $1\leq i\leq m$. As $\calE^{\gamma}_{\pp}=\calE_{\pp}$ for all primes $\pp \notin \{\pp_1,\dots,\pp_m\}$, we conclude that $u^n\in \calE^{\gamma}$ for all $\gamma$. As all maximal orders of $B$ are conjugate to one of the $\calE^{\gamma}$, the proposition follows from Lemma \ref{lem:smallestinert}.
\end{proof}

\subsection{Geometric}

We refer the reader to Maclachlan--Reid \cite{MR} for a thorough treatment of this material.

\subsubsection{Hyperbolic geometry}

Hyperbolic $n$--space $\mathbf{H}^n$ is the real rank one symmetric space associated to the real simple Lie group $\SO(n,1)$. We identify the group of orientation preserving isometries of $\mathbf{H}^2,\mathbf{H}^3$ with $\PSL(2,\R),\PSL(2,\C)$, respectively. We view $\mathbf{H}^2,\mathbf{H}^3$ as the symmetric spaces $\mathbf{H}^2 = \PSL(2,\R)/\SO(2)$, $\mathbf{H}^3 = \PSL(2,\C)/\SU(2)$. Isometries of $\mathbf{H}^2,\mathbf{H}^3$ split into three classes depending on the trace of the element. An isometry $\gamma \in \PSL(2,\C)$ is \textbf{elliptic} if $\Tr(\gamma) \in (-2,2)$, \textbf{parabolic} if $\abs{\Tr(\gamma)} = 2$, and \textbf{hyperbolic} if $\abs{\Tr(\gamma)} > 2$. 

We will sometimes refer to lattices $\Gamma$ in $\PSL(2,\R)$ or $\PSL(2,\C)$ as \textbf{Fuchsian} of \textbf{Kleinian} groups. Given a lattice $\Gamma$ in $\PSL(2,\R),\PSL(2,\C)$, the associated quotient $M = \textbf{H}^2/\Gamma,\textbf{H}^3/\Gamma$ is a complete finite volume hyperbolic $2$-- or $3$--orbifold. We state an inequality of Gelander \cite[Thm 1.7]{Gelander} involving the volume of a complete, finite volume hyperbolic 3--manifold $M$ and the rank of the fundamental group $\pi_1(M) = \Gamma$.


\begin{thm}[Gelander]\label{RankVolumeInequality}
There exists a constant $C$ such that if $M$ is a complete, finite volume hyperbolic 3--manifold of volume $V$ and rank $r$ fundamental group, then $r \leq CV$.
\end{thm}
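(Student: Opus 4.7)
The plan is to establish the bound via the Margulis lemma and a Gromov-style ball-packing / nerve argument, which transfers the volume bound on $M$ into a bound on the number of generators of $\pi_1(M)$. Let $\epsilon > 0$ be the Margulis constant for $\mathbf{H}^3$ and decompose $M = M_{\geq \epsilon} \cup M_{<\epsilon}$ into its $\epsilon$--thick and $\epsilon$--thin parts. The thin part is a disjoint union of finitely many standard pieces: horoball cusp neighborhoods and embedded solid-torus tubes around short closed geodesics, each having free abelian fundamental group of rank at most $2$.

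On the thick part, I would choose a maximal $(\epsilon/2)$--separated subset $\{x_1,\dots,x_N\} \subset M_{\geq\epsilon}$. Since each $x_i$ has injectivity radius at least $\epsilon/2$, the metric balls $B(x_i,\epsilon/4)$ embed isometrically in $M$ and are pairwise disjoint, so $N \cdot v_0 \leq V$, where $v_0$ is the volume of a ball of radius $\epsilon/4$ in $\mathbf{H}^3$; hence $N \ll V$. By maximality, the balls $B(x_i,\epsilon)$ cover $M_{\geq\epsilon}$. I would then form the nerve graph $\mathcal{G}$ with vertices $x_1,\dots,x_N$ and an edge joining $x_i$ to $x_j$ whenever $B(x_i,\epsilon)\cap B(x_j,\epsilon)\neq\emptyset$. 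A standard packing estimate in $\mathbf{H}^3$ bounds vertex degrees in $\mathcal{G}$ by an absolute constant, so $\mathcal{G}$ has at most $O(N)$ edges. A van Kampen / spanning-tree argument, using that each ball is simply connected, shows that $\pi_1(M_{\geq\epsilon})$ is generated by one loop per non-tree edge of $\mathcal{G}$, giving at most $O(V)$ generators.

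Finally, I would reincorporate the thin part via Seifert--van Kampen. Each thin component is glued to $M_{\geq\epsilon}$ along a torus or annulus and contributes at most two new generators to $\pi_1(M)$ beyond those already accounted for. The number of thin components meeting $M_{\geq\epsilon}$ is itself $O(N) = O(V)$, since each boundary component of a thin piece lies in the $\epsilon$--neighborhood of some $x_i$, and each $x_i$ can be adjacent to only a bounded number of thin pieces by packing. Summing these contributions yields $r \leq CV$ for an absolute constant $C$.

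The main technical obstacle is the nerve-to-spine step: one must upgrade the cover by balls to an actual piecewise-geodesic $2$--complex structure, choose a spanning tree of short geodesic paths between the $x_i$, and verify that the loops obtained by closing the non-tree edges (together with the abelian generators from the thin pieces) actually generate $\pi_1(M)$ rather than merely a subgroup. Handling rank--$2$ cusps so that each contributes exactly two generators regardless of the shape of the cusp cross-section, and ensuring the gluings along the thick--thin boundary are compatible with the spanning-tree choice, is where the argument is most delicate.
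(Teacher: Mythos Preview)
The paper does not give its own proof of this statement: it is quoted as a result of Gelander \cite[Theorem 1.7]{Gelander} (with related references to \cite{Gelander1} and \cite{BGLS}) and used as a black box. So there is no in-paper argument to compare against.

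That said, your sketch is exactly the standard proof, and is essentially what Gelander does in the hyperbolic case: thick--thin decomposition via the Margulis lemma, a maximal $(\epsilon/2)$--net in the thick part whose cardinality is $O(V)$ by disjointness of $\epsilon/4$--balls, bounded-degree nerve from the covering $\epsilon$--balls, and then a van Kampen/spanning-tree count to bound the rank of $\pi_1$ of the thick part, followed by adding boundedly many abelian generators per thin component. The obstacle you flag is the right one; the clean way around it is to note that in the thick part each $\epsilon$--ball is genuinely an embedded convex hyperbolic ball, so all nonempty finite intersections are convex and hence contractible, making the cover good and the nerve lemma applicable directly. With that observation the ``upgrade to a $2$--complex'' step is unnecessary.
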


\subsubsection{Arithmetic hyperbolic manifolds}

Let $k$ be a totally real field with real places $\pp_1,\dots,\pp_{r_1}$. Fix a real place of $k$ which, reordering if necessary, we denote by $\pp_1$. We select a quaternion algebra $B/k$ with the property that $\pp_j \in \Ram(B)$ if and only if $j>1$. In particular, $B_{\pp_1} \cong \MM(2,\R)$ and $B_{\pp_j} \cong \mathbb{H}$ for $j>1$, where $\mathbb{H}$ is the quaternions over $\R$. Under the first isomorphism, the group of norm one elements $B^1$ maps into $\SL(2,\R)$. Selecting a maximal order $\calO \subset B$, the image of $\calO^1$ in $\SL(2,\R)$ and the image of the projection $P\calO^1$ of $\calO^1$ to $\PSL(2,\R)$ are both lattices. We say $\Gamma < \SL(2,\R)$ or $\PSL(2,\R)$ is an \textbf{arithmetic lattice} if $\Gamma$ is commensurable in the wide sense with $\calO^1$ or $P\calO^1$ for some totally real number field $k$, quaternion algebra $B/k$, and maximal order $\calO \subset B$ as above. We use $\Gamma_\calO$ to denote $\calO^1$ and we say a lattice $\Gamma<\SL(2,\R)$ is \textbf{derived from a quaternion algebra} if $\Gamma < \Gamma_\calO$ for some $k,B,\calO$ as above. 

The construction of arithmetic lattices in $\PSL(2,\C)$ is similar. Let $k$ be a number field with a unique complex place $\pp_1$ and real places $\pp_2,\dots,\pp_{r_1+1}$ and let $B/k$ be quaternion algebra such that $\pp_j \in \Ram(B)$ if and only if $j>1$. Fixing an isomorphism $B_{\pp_1} \cong \MM(2,\C)$ and a maximal order $\calO \subset B$, the groups $\calO^1,P\calO^1$ are lattices in $\SL(2,\C), \PSL(2,\C)$. Any lattice $\Gamma < \SL(2,\C),\PSL(2,\C)$ that is commensurable in the wide sense with $\calO^1, P\calO^1$, for some $k,B,\calO$ as above, will be called an \textbf{arithmetic lattice} in $\SL(2,\C), \PSL(2,\C)$. We say a lattice $\Gamma<\SL(2,\C)$ is \textbf{derived from a quaternion algebra} if $\Gamma < \Gamma_\calO$ for some $k,B,\calO$ as above. Finally, if $B$ is a division algebra, $\calO^1, P\calO^1$ are cocompact.


Given two arithmetic lattices $\Gamma_1,\Gamma_2$ arising from $(k_j,B_j)$, $\Gamma_1,\Gamma_2$ will be commensurable in the wide sense if and only if $k_1 \cong k_2$ and $B_1 \cong B_2$ (see \cite[Thm 8.4.1]{MR}). We will make use of this fact throughout the remainder of this article.

\begin{thm}\label{CommensurabilityInvariants}
Let $\Gamma_1,\Gamma_2$ be arithmetic lattices in $\PSL(2,\R)$ or $\PSL(2,\C)$ with arithmetic data $(k_1,B_1),(k_2,B_2)$, respectively. Then $\Gamma_1,\Gamma_2$ are commensurable in the wide sense if and only if $k = k_1 \cong k_2$ and $B_1 \cong B_2$ as $k$--algebras.
\end{thm}

Theorem \ref{CommensurabilityInvariants} was proven by Takeuchi \cite{takeuchi} for Fuchsian groups and for Kleinian groups by Macbeath \cite{Macbeath}, Reid \cite{ReidInv}. We say that $M$ is an \textbf{arithmetic hyperbolic 2-- or 3--orbifold} if the orbifold fundamental group of $M$ is an arithmetic lattice in $\PSL(2,\R)$ or $\PSL(2,\C)$. In this case, $\pi_1(M)=\Gamma$ is commensurable with $P\calO^1$ for some $k,B,\calO$ as above. We call $k$ the \textbf{invariant trace field/trace field} and $B$ the \textbf{invariant quaternion algebra/quaternion algebra} of $M$.

\subsubsection{Geodesics and quadratic subfields}

Let $M$ be an arithmetic hyperbolic 2-- or 3--orbifold arising from $(k,B)$ with orbifold fundamental group $\Gamma < \PSL(2,\R)$ or $\PSL(2,\C)$. The closed geodesics $c_\gamma\colon S^1 \to M$ on $M$ are in bijection with the $\Gamma$--conjugacy classes $[\gamma]_\Gamma$ of hyperbolic elements $\gamma$ in $\Gamma$. The roots of the characteristic polynomial $p_\gamma(t)$ are given by the eigenvalues of $\gamma$ and the associated (complex) geodesic length $\ell(c_\gamma)$ is given by (see \cite[p.~372]{MR})
\begin{equation}\label{TraceLengthFormula}
\cosh\pr{\frac{\ell(c_\gamma)}{2}} = \pm \frac{\Tr(\gamma)}{2}.
\end{equation}
When $\Gamma < \PSL(2,\R)$, \eqref{TraceLengthFormula} gives the \textbf{length} of the geodesic associated to $\gamma$. When $\Gamma<\PSL(2,\C)$, \eqref{TraceLengthFormula} gives the \textbf{complex length} of the geodesic associated to $\gamma$. In this case, $\ell(\gamma) = \ell_0(\gamma) + i\theta(\gamma)$ where $\theta(\gamma)$ is the angle of rotation about the geodesic axis associated to $\gamma$ and $\ell_0(\gamma)$ is the length of the geodesic associated to $\gamma$. In particular, when $\Gamma<\PSL(2,\C)$, $\ell(\gamma)$ is the associated complex length and $\ell_0(\gamma)$ is the length. When $\Gamma < \PSL(2,\R)$, $\ell(\gamma)$ will denote the length of the associated geodesic. We denote by $\lambda_\gamma$ the unique eigenvalue of $\gamma$ with $\abs{\lambda_\gamma} > 1$ and note that $\lambda_\gamma \in \calO_{k_\gamma}^1$ when $\Gamma$ is arithmetic. Also, each $\gamma$ determines a maximal subfield $k_\gamma = k(\lambda_\gamma)$ of the quaternion algebra $B$.


\subsubsection{Totally geodesic surfaces}


Asssociated to an arithmetic hyperbolic 3--orbifold is a pair $(L,B)$, where $L$ is a number field with exactly one complex place and $B/L$ is a quaternion algebra that is ramified at all of the real places. If there exists a totally real subfield $k \su L$ with $L/k$ quadratic and $B_0/k$ is a quaternion algebra such that $B_0 \otimes_k L \cong B$, then the pair $(k,B_0)$ will be data for a commensurability class of arithmetic hyperbolic $2$--orbifolds provided that $B_0$ is unramified at the real place $\pp$ under the complex place $\mathfrak{P}$ of $K$. The following appears as \cite[Thm 9.5.5]{MR}.

\begin{thm}\label{TGS-Theorem}
Let $\Gamma$ be an arithmetic lattice in $\PSL(2,\C)$ with arithmetic data $(L,B)$ and suppose that $k$ is a totally real subfield of $L$ with $[L:k]=2$. Suppose $B_0$ is a quaternion algebra over $k$ ramified at all real places of $k$ except at the place under the complex place of $L$. Then $B \cong B_0 \otimes_k L$ if and only if $\Ram_f(B)$ consists of $2r$ places (where $r\geq 0$ ) $\set{\mathfrak{P}_{i,j}}$ where $j$ ranges over $\set{1, \dots, r}$ and $i$ ranges over $\set{1,2}$ and satisfy $\mathfrak{P}_{1,j}\cap \mathcal{O}_k = \mathfrak{P}_{2,i} \cap \mathcal{O}_k = \pp_i$, where $\set{\pp_1,\dots,\pp_r} \subset \Ram_f(B_0)$ with $\Ram_f(B_0)\setminus \set{\pp_1,\dots ,\pp_r}$ consisting of primes in $\mathcal{O}_k$ which are inert or ramified in $L/k$.
\end{thm}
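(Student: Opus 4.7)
The plan is to reduce everything to the local-global principle for quaternion algebras (Theorem~\ref{lem:allDAs}) by computing the ramification set of $B_0 \otimes_k L$ place by place in $L$. The central local computation is: for a finite prime $\mathfrak{P}$ of $L$ lying over a prime $\pp$ of $k$, the completion $(B_0 \otimes_k L)_{\mathfrak{P}} = (B_0)_{\pp} \otimes_{k_\pp} L_{\mathfrak{P}}$ is a division algebra if and only if $(B_0)_{\pp}$ is a division algebra and $L_{\mathfrak{P}} = k_\pp$, i.e., $\pp$ splits in $L/k$. This rests on the classical dimension/centralizer fact that every quaternion division algebra over a local field is split by each of its quadratic extensions. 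Parallel archimedean analysis, combined with the hypothesis on the real ramification of $B_0$ and the fact that $L$ has exactly one complex place (lying over the distinguished real place of $k$), shows that $\Ram_\infty(B_0 \otimes_k L)$ is precisely the set of all real places of $L$, which automatically matches $\Ram_\infty(B)$ for any arithmetic Kleinian data algebra.

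For the forward direction, assume $B \cong B_0 \otimes_k L$. The local computation immediately identifies $\Ram_f(B)$ with the set of primes of $L$ lying above primes of $\Ram_f(B_0)$ that split in $L/k$. Each such split prime $\pp_j$ contributes exactly two primes of $L$, producing the pair structure $\{\mathfrak{P}_{i,j}\}_{i \in \{1,2\},\, j \in \{1,\dots,r\}}$ with common restrictions $\pp_j$ to $\mathcal{O}_k$; the remaining primes of $\Ram_f(B_0)$ contribute nothing to $\Ram_f(B)$ and therefore must be inert or ramified in $L/k$, as claimed.

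For the converse, assume the structural description of $\Ram_f(B)$ in the statement. Applying the local computation in the opposite direction, the primes $\pp_j \in \Ram_f(B_0)$ are split in $L/k$ and so each contributes the pair $\{\mathfrak{P}_{1,j}, \mathfrak{P}_{2,j}\}$ to $\Ram_f(B_0 \otimes_k L)$, while the remaining primes of $\Ram_f(B_0)$ are by hypothesis inert or ramified in $L/k$ and therefore do not appear in $\Ram_f(B_0 \otimes_k L)$. Combined with the archimedean matching, this gives $\Ram(B_0 \otimes_k L) = \Ram(B)$, and the uniqueness clause of Theorem~\ref{lem:allDAs} yields $B_0 \otimes_k L \cong B$. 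The main subtlety in the argument is the place-by-place bookkeeping rather than any deep obstruction: once the local ramification dictionary above is in hand, the theorem follows by careful enumeration of split, inert, and ramified primes followed by an appeal to Albert--Brauer--Hasse--Noether.
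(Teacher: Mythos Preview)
Your proof is correct and follows the standard approach: compute $\Ram(B_0\otimes_k L)$ place by place using the fact that a local quaternion division algebra is split by any quadratic extension of its center, then invoke Albert--Brauer--Hasse--Noether. The paper does not give its own proof of this statement; it simply cites \cite[Theorem~9.5.5]{MR}, whose proof is essentially the argument you have written. One small point worth making explicit in your write-up of the converse: the hypothesis that $\mathfrak{P}_{1,j}$ and $\mathfrak{P}_{2,j}$ are \emph{distinct} primes of $L$ with the same restriction $\pp_j$ to $\mathcal{O}_k$ forces $\pp_j$ to split in $L/k$ (in a quadratic extension a non-split prime has a unique prime above it), which is what you need to conclude that both $\mathfrak{P}_{i,j}$ lie in $\Ram_f(B_0\otimes_k L)$.
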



\section{Main tools: Algebraic counting results}\label{MainTools-Algebraic}

We now begin our first main section, where we will establish our algebraic counting results. For the reader interested only in the applications of these results to the rigidity theorems stated in the introduction, the reader can start at Section \ref{GeometricRigiditySection} and refer back to the results from Sections \ref{MainTools-Algebraic} and \ref{Maintools:Geometriccount}.

\subsection{Proof of Theorem \ref{thm:area1}}

Fix a number field $k$ and fix a positive integer $n$. In this section, we estimate the number of division algebras $D/k$ of dimension $n^2$ whose discriminant lies below a large bound $x$.  Our main tool is the following Tauberian theorem of Delange \cite{DelangeOG,Delange}.

\begin{thm}[Delange]\label{DelangeTheorem}
Let
\[ G(s) = \sum_{N=1}^{\infty} a_N N^{-s} \]
be a Dirichlet series satisfying the following conditions for certain real numbers $\rho > 0$ and $\beta > 0$:
\begin{enumerate}
\item each $a_N \geq 0$,
\item $G(s)$ converges for $\Re(s) > \rho$,
\item $G(s)$ can be continued to an analytic function in the closed half-plane $\Re(s) \geq \rho$, except possibly for a singularity at $s=\rho$ itself,
\item there is an open neighborhood of $\rho$, and functions $A(s)$ and $B(s)$ analytic at $s=\rho$, with
\[ G(s) = \frac{A(s)}{(s-\rho)^\beta} + B(s) \]
at every point $s$ in this neighborhood having $\Re(s) > \rho$.
\end{enumerate}
Then as $x\to\infty$,
\[ \sum_{N \leq x} a_N = \left(\frac{A(\rho)}{\rho \Gamma(\beta)}+o(1)\right) x^{\rho} (\log{x})^{\beta-1}. \]
\end{thm}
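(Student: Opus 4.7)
The plan is to prove Delange's theorem by a Karamata-type Tauberian method adapted to a branch singularity at $s=\rho$, with the nonnegativity of the $a_N$ supplying the essential Tauberian input. Writing $A(x) = \sum_{N\le x} a_N$, this is nondecreasing and right-continuous, and partial summation gives
\[
G(s) = s \int_{1}^{\infty} A(x)\, x^{-s-1}\, dx \qquad (\Re(s) > \rho),
\]
so the target asymptotic is a Mellin-inversion problem that I would attack by comparing $G$ against an explicit model series whose partial sums can be evaluated directly.

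First I would introduce the model coefficients $c_N = N^{\rho-1}(\log N)^{\beta-1}$ for $N\ge 2$ and set $G_0(s) = \sum_{N\ge 2} c_N N^{-s}$. By the classical Mellin identity
\[
\int_{1}^{\infty} x^{\rho-1-s}(\log x)^{\beta-1}\, dx = \frac{\Gamma(\beta)}{(s-\rho)^{\beta}} \qquad (\Re(s) > \rho),
\]
$G_0$ has principal singularity $\Gamma(\beta)/(s-\rho)^{\beta}$ at $s=\rho$ modulo a function analytic on the closed half-plane $\Re(s)\ge \rho$, and direct integral estimation yields $\sum_{N\le x} c_N \sim x^{\rho}(\log x)^{\beta-1}/\rho$. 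Using hypothesis (4), I would decompose
\[
G(s) = \frac{A(\rho)}{\Gamma(\beta)}\, G_0(s) + H(s),
\]
where $H$ continues analytically to an open neighborhood of $\{\Re(s)\ge \rho\}$, except possibly for a singularity at $s=\rho$ of strictly weaker type (order $<\beta$).

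Second, to pass from this analytic information to a pointwise asymptotic for $A(x)$, I would use the standard smoothing device. Choose compactly supported $\phi_{\pm}$ on $(0,\infty)$ that sandwich $\mathbf{1}_{[0,x]}$ and whose Mellin transforms $\widehat{\phi}_{\pm}(s)$ decay rapidly on every vertical line. Then
\[
\sum_{N} a_N\, \phi_{\pm}(N) = \frac{1}{2\pi i}\int_{(c)} G(s)\,\widehat{\phi}_{\pm}(s)\, ds,
\]
and the contour can be deformed across $\Re(s)=\rho$ along a keyhole around the branch point at $\rho$, using only the rapid decay of $\widehat{\phi}_{\pm}$ (we have no growth control on $G$ on vertical lines). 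The Hankel integral around the keyhole evaluates the contribution of the $(s-\rho)^{-\beta}$ singularity, producing main term $A(\rho) x^{\rho}(\log x)^{\beta-1}/(\rho\,\Gamma(\beta))$, while $H$ contributes only $o(x^{\rho}(\log x)^{\beta-1})$. Nonnegativity of $a_N$ then lets the sandwich $\phi_{-}\le \mathbf{1}_{[0,x]}\le \phi_{+}$ upgrade these smoothed asymptotics to the honest partial sum $A(x)$, yielding the stated formula.

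The main obstacle is precisely the noninteger $\beta$: at $s=\rho$ the function $G$ has a branch point rather than a pole, so straight residue calculus after a contour shift is unavailable, and we also have no a priori growth control on $G(s)$ along vertical lines inside the critical strip. The keyhole/Hankel device replaces the missing residue, and it forces us to go through smoothed sums rather than sharp cutoffs (the Mellin transform $x^{s}/s$ of $\mathbf{1}_{[0,x]}$ decays only like $1/|s|$, which is too slow). Tracking how the Hankel contour evaluates against $(s-\rho)^{-\beta}$ is the source of the $1/\Gamma(\beta)$ in the final answer.
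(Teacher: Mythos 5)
The paper does not actually prove Theorem \ref{DelangeTheorem}: it is invoked as a black box from Delange's papers, and the only argument the authors supply is the short remark afterwards handling the degenerate case $A(\rho)=0$ by comparison with $\zeta(s/\rho)^{\beta}$. So your proposal cannot be matched against a proof in the text; it has to stand on its own, and as written it has a genuine gap at the contour-deformation step.

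The keyhole/Hankel shift across $\Re(s)=\rho$ uses more than the hypotheses provide, in two separate ways. First, hypothesis (iii) asserts analytic continuation only to the \emph{closed} half-plane $\Re(s)\ge\rho$ minus the point $\rho$. Analyticity at each point $\rho+it$ with $t\ne 0$ gives a small disk of analyticity there, but the radii of those disks may shrink to $0$ as $|t|\to\infty$; no open strip $\Re(s)>\rho-\delta$ (cut at $\rho$) is guaranteed, so there is in general no region through which to deform the contour. Second, even granting such a strip, your parenthetical ``we have no growth control on $G$ on vertical lines'' is not a minor caveat you can wave past: with no bound on $G(\rho+it)$ as $|t|\to\infty$, rapid decay of $\widehat{\phi}_{\pm}$ cannot control the shifted integral, since the integrand is the \emph{product} and one factor is completely unbounded in principle. (Positivity of the $a_N$ does give $|G(\sigma+it)|\le G(\sigma)$ for $\sigma>\rho$, but this degenerates as $\sigma\to\rho^{+}$ and says nothing at or left of the critical line.) This is precisely why the result is a Tauberian theorem in the Wiener--Ikehara/Karamata tradition: the nonnegativity of the $a_N$ has to do substantive analytic work on the boundary line itself, replacing the missing analytic continuation and growth bounds, via Fej\'er-type kernels and a Tauberian passage to the sharp cutoff, rather than merely supplying the final $\phi_{-}\le \mathbf{1}_{[0,x]}\le \phi_{+}$ sandwich. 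Your decomposition $G=\frac{A(\rho)}{\Gamma(\beta)}G_0+H$ and the identification of the constant $A(\rho)/(\rho\,\Gamma(\beta))$ via the Mellin integral $\int_1^{\infty}x^{\rho-1-s}(\log x)^{\beta-1}\,dx=\Gamma(\beta)/(s-\rho)^{\beta}$ are both correct and useful; the missing piece is a genuine Tauberian mechanism in place of the contour shift.
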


\begin{rmk}
We allow the possibility that $A(\rho)=0$, in which case the conclusion of Theorem \ref{DelangeTheorem} is that
\[ \sum_{N \leq x} a_N = o(x^\rho (\log{x})^{\beta-1}), \]
as $x\to\infty$. While Delange's theorem is usually stated with the restriction that $A(\rho) \neq 0$, the cases when $A(\rho)=0$ follow with no extra difficulty. For instance, suppose that $\rho$ is the reciprocal of a positive integer, a condition that holds in all of our applications. If $A(\rho)=0$, we can apply the restricted theorem first to $G(s) + \zeta(s/\rho)^{\beta}$, then to $\zeta(s/\rho)^{\beta}$, and then subtract the results to get the assertion we want. If $\rho$ is not the reciprocal of a positive integer, then $\zeta(s/\rho)$ need not be a Dirichlet series itself. However, this argument still works, provided we take as our starting point Delange's original Tauberian theorem \cite{DelangeOG}, which is in terms of Laplace transforms, instead of its consequences for Dirichlet series \cite{Delange}.
\end{rmk}

According to Theorem \ref{lem:allDAs}, a division algebra $D$ over $k$ is uniquely specified by its Hasse invariants (i.e., the set $S=\Ram(D)$ and the choice of fractions $\{a_\pp/m_\pp\}_{\pp \in S}$). Thus, our task is to count the number of ways of choosing these invariants so that the resulting division algebra $D$ has dimension $n^2$ and $\abs{\disc(D)}\le x$. It turns out that this is a difficult problem to attack directly. More natural, from the analytic side, is to first count \emph{all} central simple algebras over $k$ of dimension $n^2$. The count of division algebras can then be obtained by the inclusion-exclusion principle.  We now carefully execute the above approach by introducing the following set:
\begin{multline*}
N_{m,n}(x):=\#\{\text{central simple algebras $A/k$ of dimension $n^2$ of the form}\\  \text{$A=\MM(r,D)$, where $\dim(D)=d^2$ for some $d\mid m$,  and $\abs{\disc(A)}\le x$}\}.
\end{multline*}
The remarks earlier in this paragraph show that in general, $N_{m,n}(x)$ counts the number of choices for Hasse invariants for which $\lcm_{\pp \in S}[m_{\pp}]$ divides $m$ and the product in \eqref{eq:discAexpr} is bounded by $x$. Our key lemma is the following estimate for $N_{m,n}(x)$. Note that the special case of the lemma when $m=n$ provides us with asymptotic behavior for the counting function of all dimension $n^2$ central simple algebras over $k$.

\begin{lem}\label{lem:keylem0}
Let $k/\Q$ be a number field. Let $n > 1$ be an integer, and let $\ell$ be the smallest prime factor of $n$. Let $m$ be a divisor of $n$. Then as $x\to\infty$,
\[ N_{m,n}(x) = (\delta_{m,n} +o(1)) x^{\frac{1}{n^2(1-1/\ell)}} (\log{x})^{\ell-2} \]
for a certain constant $\delta_{m,n}$. If $\ell \nmid m$, then $\delta_{m,n}=0$. Suppose now that $\ell \mid m$. Let $\kappa$ denote the residue at $s=1$ of the Dedekind zeta function $\zeta_k(s)$. If $m$ is odd, then
\begin{multline}
\delta_{m,n} = \frac{\kappa^{\ell-1}}{m(\ell-2)!} \cdot \frac{1}{(n^2(1-1/\ell))^{\ell-2}} \cdot \\ \sum_{\substack{0 \leq j < m\\ \ell \mid j}}\left( \prod_{\pp \text{ finite}} \left(1+\frac{\ell-1}{|\pp|} + \sum_{\substack{m_{\pp} \mid m \\ m_{\pp}> \ell}} \frac{\mu(\frac{m_\pp}{(m_\pp,j)}) \varphi(m_\pp)/\varphi(\frac{m_\pp}{(m_\pp,j)})}{|\pp|^{(1-1/m_\pp)/(1-1/\ell)}} \right)\left(1-\frac{1}{|\pp|}\right)^{\ell-1} \right).\label{eq:deltamdef}
\end{multline}
If $m$ is even, \eqref{eq:deltamdef} needs to be multiplied by $2^{r_1}$, where $r_1$ is the number of real embeddings of $k$.
\end{lem}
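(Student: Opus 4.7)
My plan is to encode $N_{m,n}(x)$ as a Dirichlet series
\[ G_{m,n}(s) := \sum_A \abs{\disc(A)}^{-s} \]
and extract its asymptotic via Delange's Tauberian theorem (Theorem \ref{DelangeTheorem}). Theorem \ref{lem:allDAs} reduces the problem to a combinatorial one: the algebras counted by $N_{m,n}$ are in bijection with admissible Hasse invariant systems $\{a_\pp/m_\pp\}_{\pp\in S}$ satisfying $m_\pp\mid m$ at finite primes, $m_\pp=2$ at real primes (only allowed when $2\mid m$), and the global integrality condition $\sum_{\pp\in S} a_\pp/m_\pp\in\Z$; moreover $\abs{\disc(A)} = \prod_{\pp\in S,\,\text{finite}} \abs{\pp}^{n^2(1-1/m_\pp)}$.

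The integrality condition obstructs an Euler product, so I would detect it via orthogonality of the additive characters of $\tfrac{1}{m}\Z/\Z$:
\[ \mathbf{1}_{\sum a_\pp/m_\pp\in\Z} \;=\; \frac{1}{m}\sum_{j=0}^{m-1} \prod_{\pp} e^{2\pi i j a_\pp/m_\pp}. \]
This decomposes $G_{m,n}(s) = \tfrac{1}{m}\sum_{j=0}^{m-1} G_j(s)$, where each $G_j(s) = R_j \prod_{\pp\text{ finite}} L_{\pp,j}(s)$ now admits an Euler product. The inner sum over $a$ coprime to $m_\pp$ is a classical Ramanujan sum, giving
\[ L_{\pp,j}(s) \;=\; 1 + \sum_{\substack{d\mid m\\ d>1}} c_d(j)\,\abs{\pp}^{-n^2(1-1/d)s}, \qquad c_d(j) := \mu\!\left(\tfrac{d}{(d,j)}\right)\frac{\varphi(d)}{\varphi(d/(d,j))}. \]
A short calculation shows $R_j = 2^{r_1}$ when $m$ is even and $j$ is even, $R_j=0$ when $m$ is even and $j$ is odd, and $R_j=1$ when $m$ is odd.

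Since $n^2(1-1/d)$ is minimized at $d=\ell$, the candidate pole sits at $s=\rho := 1/(n^2(1-1/\ell))$. If $\ell\nmid m$, no $d=\ell$ term is available and the true abscissa of convergence of $G_{m,n}(s)$ lies strictly below $\rho$, so $N_{m,n}(x)=o(x^\rho)$ and $\delta_{m,n}=0$ as claimed. If $\ell\mid m$, I would pull out the leading zeta singularity by writing
\[ G_j(s) \;=\; \zeta_k\!\bigl(n^2(1-1/\ell)s\bigr)^{c_\ell(j)}\, H_j(s), \qquad H_j(s) := R_j \prod_{\pp\text{ finite}} L_{\pp,j}(s)\bigl(1-\abs{\pp}^{-n^2(1-1/\ell)s}\bigr)^{c_\ell(j)}. \]
Since $\ell$ is prime, $c_\ell(j) = \ell-1$ if $\ell\mid j$ and $c_\ell(j) = -1$ otherwise. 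The crux of the argument—the step I expect to require the most care—is verifying that the leading $c_\ell(j)\abs{\pp}^{-n^2(1-1/\ell)s}$ terms in $L_{\pp,j}$ cancel exactly against the corresponding terms of $(1-\abs{\pp}^{-n^2(1-1/\ell)s})^{c_\ell(j)}$, so that each local factor of $H_j$ has the form $1+O(\abs{\pp}^{-\sigma})$ with a uniform $\sigma>1$. The residual $d>\ell$ terms of $L_{\pp,j}$ already have exponent $(1-1/d)/(1-1/\ell)>1$ at $s=\rho$, so this verification gives analytic continuation of $H_j$ to a neighborhood of $\Re(s)\geq\rho$.

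Only the $j$ with $\ell\mid j$ then contribute a pole to $G_{m,n}$ at $s=\rho$, of order $\ell-1$; the others yield an analytic zero via $\zeta_k(\cdot)^{-1}$. Using the Laurent expansion
\[ \zeta_k\!\bigl(n^2(1-1/\ell)s\bigr)^{\ell-1} \;\sim\; \frac{\kappa^{\ell-1}}{(n^2(1-1/\ell))^{\ell-1}(s-\rho)^{\ell-1}} \]
and applying Delange with $\beta = \ell-1$ (noting that $G_{m,n}$ has nonnegative Dirichlet coefficients) gives
\[ N_{m,n}(x) \;\sim\; \frac{\kappa^{\ell-1}}{m\cdot\rho\cdot(\ell-2)!\cdot(n^2(1-1/\ell))^{\ell-1}}\Biggl(\,\sum_{\substack{0\le j<m\\ \ell\mid j}} H_j(\rho)\Biggr)\, x^\rho (\log x)^{\ell-2}. \]
Simplifying $\rho\cdot(n^2(1-1/\ell))^{\ell-1}=(n^2(1-1/\ell))^{\ell-2}$, substituting the explicit Euler product for $H_j(\rho)$, and using $n^2(1-1/d)\rho=(1-1/d)/(1-1/\ell)$ in the exponents recovers the stated formula for $\delta_{m,n}$; the extra $2^{r_1}$ factor in the even case arises from $R_j$, since every $j$ with $\ell\mid j$ is automatically even when $\ell=2$.
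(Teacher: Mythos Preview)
Your proposal is correct and follows essentially the same route as the paper: both detect the global integrality condition via orthogonality of $m$th roots of unity, obtain the decomposition $G=\tfrac{1}{m}\sum_j G_j$ with each $G_j$ an Euler product whose local terms involve Ramanujan sums, peel off the $\zeta_k(n^2(1-1/\ell)s)^{c_\ell(j)}$ factor (with $c_\ell(j)\in\{\ell-1,-1\}$), verify the remaining Euler product is analytic on $\Re(s)\ge\rho$, and apply Delange with $\beta=\ell-1$. The only cosmetic difference is that you package the two cases $\ell\mid j$ and $\ell\nmid j$ uniformly via the exponent $c_\ell(j)$, whereas the paper treats them separately; the cancellation check you flag as the delicate step is exactly what the paper carries out by ``comparing Euler products term-by-term.''
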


Broadly, the proof of the above lemma proceeds as follows. We first set up the count for the algebras and produce corresponding Dirichlet series. We then verify that our Dirichlet series satisfy (i)-(iv) of Theorem \ref{DelangeTheorem}. We complete the proof via Theorem \ref{DelangeTheorem}. The latter two steps comprise the bulk of the work.

\begin{proof} To set up our count, recall that by Theorem \ref{lem:allDAs} a division algebra is determined by its collection of Hasse invariants, the rational numbers $a_{\pp}/m_{\pp}$ for places $\pp \mid \Ram(D)$. Thus, we need only count the number of choices for these local invariants. To that end, we will use the orthogonality relations among the roots of unity in an essential way. For each $d$ dividing $m$, we introduce the set
\[ \farey(m,d):= \set{ 1 \leq k \leq m~:~\frac{k}{m}\text{ has lowest terms denominator $d$}}. \]
For $0 \leq j < m$, let $\zeta_j = e^{2\pi i j/m}$, and consider the formal expression
\[ \frac{1}{m} \pr{\sum_{0 \leq j < m} \brac{\prod_{\pp\text{ finite}} \pr{1 + \sum_{\substack{m_\pp \mid m\\ m_\pp > 1}}\frac{\sum_{a_\pp \in \farey(m,m_\pp)} \zeta_j^{a_\pp}}{\pp^{n^2(1-1/m_\pp)s}}\right) \prod_{\substack{\pp\text{ real} \\ 2\mid m}} \left(1+ \frac{\zeta_j^{m/2}}{\pp^s}}}}, \]
where the conditions on the final product mean that this product appears only when $m$ is even. Expanding, we obtain a formal sum of terms $c_\m/\m^s$, where $\m$ ranges over the moduli of $k$. For $c_{\m}$ to be nonvanishing, it is necessary that every finite prime $\pp$ dividing $\m$ appears to an exponent of the form $n^2 (1-1/m_{\pp})$ for some integer $m_{\pp} > 1$ dividing $m$, and that $\m$ is not divisible by real primes except possibly if $2\mid m$. If these conditions are satisfied, then
\[ c_{\m} = \frac{1}{m}\sum_{0\le j < m} c_{\m}^{(j)}, \quad\text{where}\quad c_{\m}^{(j)} = \sum_{\substack{\{a_{\pp}\}_{\pp \mid \m,~\pp~\text{finite}}\\\text{each } a_{\pp} \in \farey(m,m_\pp)}} \zeta_{j}^{\sum_{\pp\mid \m,\, \text{finite}} a_{\pp} + \sum_{\pp\mid \m,\,\text{real}} \frac{m}{2}}. \]
Writing $a_{\pp}/m = a_{\pp}'/m_{\pp}$, and subtituting in the value of $\zeta_j$, this expression for $c_{\m}^{(j)}$ can be put in the form
\[ \sum_{\substack{\{a_{\pp}'\}_{\pp \mid \m,~\pp~\text{finite}}\\ 1 \le a'_{\pp} \le m_{\pp},~\gcd(a_{\pp}', m_{\pp})=1}} \exp\left(\frac{2\pi i j}{m} \left(m\sum_{\pp \mid \m,~\text{finite}}\frac{a_{\pp}'}{m_{\pp}} + m\sum_{\pp\mid \m,~\text{real}} \frac{1}{2} \right) \right). \]
For each integer $k$, the sum $\sum_{0 \le j < m} \exp(2\pi i jk/m)$ vanishes unless $m \mid k$, in which case the sum takes the value $m$. Now
\[ m\sum_{\substack{\pp \mid \m\\ \pp\text{ finite}}}\frac{a_{\pp}'}{m_{\pp}} + m\sum_{\substack{\pp\mid \m \\ \pp \ \text{real}}} \frac{1}{2} \quad \text{is a multiple of $m$} \Longleftrightarrow \sum_{\substack{\pp \mid \m \\ \pp\text{ finite}}}\frac{a_{\pp}'}{m_{\pp}} + \sum_{\substack{\pp\mid \m \\ \pp \ \text{real}}} \frac{1}{2} \in \Z. \]
Letting $S$ be the set of primes dividing $\m$, we conclude that $c_{\m} = \frac{1}{m} \sum_{0 \le j < m} c_{m}^{(j)}$ counts the number of choices of Hasse invariants for which $\lcm_{\pp \in S}[m_{\pp}]$ divides $m$ and
\[ \bigg(\prod_{\substack{\pp\text{ real} \\ \pp \in S}}\pp\bigg) \bigg(\prod_{\substack{\pp\text{ finite} \\ \pp \in S}} \pp^{n^2(1-1/m_P)}\bigg) = \m. \]
There is a one-to-one correspondence between these choices of Hasse parameters and central simple algebras $A/k$ of dimension $n^2$ of the form $\MM(r,D)$, where $\dim(D) = d^2$ for some $d \mid m$, and $\disc(A) = \m$. Since $d=\lcm[m_{\pp}]$ and $r=n/d$, we can view the coefficients $c_{\m}$ as counting these central simple algebras. Thus,
\[ N_{m,n}(x) = \sum_{|\m| \le x} c_\m. \]

In order to apply Delange's theorem, Theorem \ref{DelangeTheorem}, we need Dirichlet series. We obtain the needed series by simply replacing the primes $\pp$ with their norms $\abs{\pp}$ in the above products; here we set $|\pp|=1$ when $\pp$ is real. For $j=0, 1, 2, \dots, m-1$, the Dirichlet series $G_j(s)$ is given by the following product expansion:
\begin{equation}\label{eq:gjsdef}
G_j(s) = \prod_{\pp\text{ finite}} \left(1 + \sum_{\substack{m_\pp \mid m\\ m_\pp > 1}}\frac{\sum_{a_\pp \in \farey(m,m_\pp)} \zeta_j^{a_\pp}}{|\pp|^{n^2(1-1/m_\pp)s}}\right) \prod_{\substack{\pp\text{ real} \\ 2\mid m}} \left(1+ \frac{\zeta_j^{m/2}}{\abs{\pp}^s}\right).
\end{equation}
If we then set
\[ G(s) = \frac{1}{m} \sum_{0 \leq j < m} G_j(s), \]
the coefficient of $N^{-s}$ in $G(s)$ is precisely $\sum_{\abs{\m}=N} c_{\m}$. Hence, $N_{m,n}(x)$ is precisely the partial sum up to $x$ of the coefficients of $G(s)$.

We now establish that our Dirichlet series satisfy the conditions of Theorem \ref{DelangeTheorem}.

\textbf{Claim.} $G(s)$ satisfies (i)-(iv) of Theorem \ref{DelangeTheorem}.

\begin{proof}[Proof of Claim]
Since the coefficients of $G(s)$ count central simple algebras, their non-negativity is obvious. This shows that condition (i) of Delange's theorem is satisfied. To verify conditions (ii)-(iv) for $G(s)$, it suffices to verify that they hold for each individual $G_j(s)$. We will show this with
\begin{equation}\label{eq:rhobetachoice}
\rho = \frac{1}{n^2(1-1/\ell)} \quad\text{and}\quad \beta = \ell-1.
\end{equation}
Proceeding further requires a careful study of the product definition \eqref{eq:gjsdef} of $G_j(s)$. First, we deal with the product over real primes, which is present only when $2\mid m$. By our convention that real primes have norm $1$,
\[ \left(1+ \frac{\zeta_j^{m/2}}{\abs{\pp}^s}\right) = (1+\zeta_j^{m/2})^{r_1}; \]
in particular, this factor is independent of $s$. For all finite primes $\pp$, the $\pp$th term in \eqref{eq:gjsdef} has the form $1+A_\pp(s)$, where
\begin{equation}\label{eq:apsexpansion}
A_\pp(s) =  \sum_{\substack{m_\pp \mid m\\ m_\pp > 1}}\frac{\sum_{a_\pp \in \farey(m,m_\pp)} \zeta_j^{a_\pp}}{\abs{\pp}^{n^2(1-1/m_\pp)s}}.
\end{equation}
Since $m$ divides $n$ and $\ell$ is the least prime divisor of $n$, we have $n^2(1-1/m_\pp) \geq n^2(1-1/\ell)$ for each term in the sum. Moreover, each numerator on the right-hand side of \eqref{eq:apsexpansion} is trivially bounded by $m$. It follows that the the formal Dirichlet series expansion of $G_j(s)$ converges absolutely for $\Re(s) > \frac{1}{n^2(1-1/\ell)}$ and coincides there with its (absolutely convergent) Euler product. This gives condition (ii). In fact, if $\ell \nmid m$, then the smallest nontrivial divisor of $m$ is strictly larger than $\ell$. The argument of the preceding paragraph then implies that $G_j(s)$ has an Euler product converging absolutely and uniformly in $\Re(s) > \frac{1}{n^2(1-1/\ell)}-\epsilon$, for some positive $\epsilon$. This shows that conditions (iii) and (iv) hold for $G_j(s)$, where in (iv) we may take $A(s)=0$ and $B(s) = G_j(s)$.

In the case when $\ell \mid m$, we have to analyze the $A_\pp(s)$ more closely. For each $m_\pp$ dividing $m$, the corresponding numerator on the right-hand side of \eqref{eq:apsexpansion} coincides with the Ramanujan sum $c_{m_\pp}(j)$. From H\"{o}lder's explicit evaluation of such sums \cite[Thm 272, p.~309]{HW08},
\begin{align*} \sum_{a_\pp \in \farey(m,m_\pp)} \zeta_j^{a_\pp} &= c_{m_\pp}(j) = \mu\left(\frac{m_\pp}{(m_\pp,j)}\right) \frac{\varphi(m_\pp)}{\varphi(m_\pp/(m_\pp,j))}.  \end{align*}
Thus, the first term on the right-hand sum in \eqref{eq:apsexpansion} --- corresponding to $m_\pp=\ell$ ---  is
\[ \mu\left(\frac{\ell}{(\ell,j)}\right) \frac{\ell-1}{\varphi(\ell/(\ell,j))} \frac{1}{\abs{\pp}^{n^2(1-1/\ell)s}}. \]
For all of the remaining terms, $n^2(1-1/m_\pp) > n^2(1-1/\ell)$. Now if $\ell\nmid j$,
\[ \mu\left(\frac{\ell}{(\ell,j)}\right) \frac{\ell-1}{\varphi(\ell/(\ell,j))} \frac{1}{\abs{\pp}^{n^2(1-1/\ell)s}} = -\frac{1}{\abs{\pp}^{n^2(1-1/\ell)s}}. \]
Put $H_j(s) = G_j(s) \zeta_k(n^2(1-1/\ell) s)$. Since
\[ \zeta_k(n^2(1-1/\ell) s) = \prod_{\pp\text{ finite}} \left(1 + \frac{1}{\abs{\pp}^{n^2(1-1/\ell)s}} + \frac{1}{\abs{\pp}^{2n^2(1-1/\ell)s}} + \dots\right), \]
the $\pp$th factor in the Euler product expansion of $H_j(s)$ has the form $1+O(|\pp|^{-Ns})$ for a positive integer $N$ strictly larger than $n^2(1-1/\ell)$. Thus, $H_j(s)$ continues analytically to $\Re(s) > 1/N$, and so also to the region $\Re(s) \geq \frac{1}{n^2(1-1/\ell)}$. Since $\zeta_k(s)$ has no zeros on $\Re(s)=1$, this gives an analytic continuation of $G_j(s) = H_j(s)\zeta_k(n^2(1-1/\ell)s)^{-1}$ to $\Re(s) \geq \frac{1}{n^2(1-1/\ell)}$. This proves (iii) and (iv) with $A(s)=0$ and $B(s)=G_j(s)$. Now suppose that $\ell \mid j$. Then
\[ \mu\left(\frac{\ell}{(\ell,j)}\right) \frac{\ell-1}{\varphi(\ell/(\ell,j))} \frac{1}{|\pp|^{n^2(1-1/\ell)s}} =\frac{\ell-1}{|\pp|^{n^2(1-1/\ell)s}}. \]
Now arguing with Euler products as above, we find that if we set $H_j(s) = G_j(s) \zeta_k(n^2(1-1/\ell)s)^{-(\ell-1)}$, then $H_j(s)$ is analytic for $\Re(s) \geq \frac{1}{n^2(1-1/\ell)}$. This implies that $G_j(s) = H_j(s) \zeta_k(n^2(1-1/\ell)s)^{\ell-1}$ continues analytically to the same closed half-plane, except for a pole of order at most $\ell-1$ at $s=\frac{1}{n^2(1-1/\ell)}$. Consequently, (iii) and (iv) hold with
\[ A(s) = G_j(s) \pr{s-\frac{1}{n^2(1-1/\ell)}}^{\ell-1} \]
and $B(s)= 0$. Collecting everything, we see that (i)--(iv) all hold for $G(s)$, for $\rho$ and $\beta$ as in \eqref{eq:rhobetachoice}. Moreover, we can take the $A(s)$ in (iv) corresponding to $G(s)$ as $\frac{1}{m}$ times the sum of the functions $A(s)$ corresponding to each $G_j(s)$.
\end{proof}

We now establish the lemma with a few applications of Theorem \ref{DelangeTheorem}. We split into two cases.

\textbf{Case 1.} $\ell \nmid m$. In the case when $\ell \nmid m$, the $A(s)$ corresponding to each $G_j(s)$ was identically zero, hence our final $A(s)$ is also $0$. Thus, Theorem \ref{DelangeTheorem} yields
\[ N_{m,n}(x) = o(x^{\frac{1}{n^2(1-1/\ell)}} (\log{x})^{\ell-2}) \quad \text{ as } x\to\infty. \]
This completes the proof of the lemma in the case $\ell \nmid m$.

\textbf{Case 2.} $\ell \mid m$. If $\ell \mid m$, our work shows that
\[ A(s) = \left(s-\frac{1}{n^2(1-1/\ell)}\right)^{\ell-1} \cdot \frac{1}{m}\pr{\sum_{\substack{0 \leq j < m\\ \ell \mid j}} G_j(s)}. \]
To evaluate this $A(s)$ at $s=\frac{1}{n^2(1-1/\ell)}$, we recall that $\kappa$ denotes the residue at $s=1$ of the simple pole of $\zeta_k(s)$. Writing
\begin{equation*}
\left(s-\frac{1}{n^2(1-1/\ell)}\right)^{\ell-1} G_j(s) = \left(\zeta_k(n^2(1-1/\ell) s) \left(s-\frac{1}{n^2(1-1/\ell)}\right)\right)^{\ell-1} \cdot G_j(s) \zeta_k(n^2(1-1/\ell) s)^{-(\ell-1)},
\end{equation*}
we see that
\begin{equation*}
A\left(\frac{1}{n^2(1-1/\ell)}\right) = \left(\frac{\kappa}{n^2(1-1/\ell)}\right)^{\ell-1} \cdot  \frac{1}{m} \brac{\sum_{\substack{0 \leq j < m \\ \ell \mid j}}\pr{ \lim_{s\to\frac{1}{n^2(1-1/\ell)}} G_j(s) \zeta_k(n^2(1-1/\ell)s)^{-(\ell-1)}}}.
\end{equation*}
It remains to evaluate the limits inside the final sum. For the values of $j$ and $m$ under consideration, $\ell \mid j$ and $\ell \mid m$. So for each finite prime $\pp$, the $\pp$th term in the product expansion \eqref{eq:gjsdef} of $G_j(s)$ begins as
\[ 1+ (\ell-1)/|\pp|^{n^2(1-1/\ell)s} + \dots. \]
Now consider the factors corresponding to infinite primes. If $m$ is odd, then there are no such factors in \eqref{eq:gjsdef}. If $m$ is even, then we must have $\ell=2$, and since $\ell \mid j$,
\[ 1 + \zeta_j^{m/2} = 1 + e^{\pi i j} = 1 + (-1)^j = 2; \]
thus, the factor in \eqref{eq:gjsdef} giving the contribution of the infinite primes is precisely $2^{r_1}$. We conclude that if $m$ is odd, then
\begin{equation*}
\lim_{s\to\frac{1}{n^2(1-1/\ell)}} G_j(s) \zeta_k(s)^{-(\ell-1)} = \prod_{\pp \text{ finite}} \left(1 + \frac{\ell-1}{\abs{\pp}}+ \sum_{\substack{m_\pp \mid m\\ m_\pp > \ell}}\frac{\mu\left(\frac{m_\pp}{(m_\pp,j)}\right) \frac{\varphi(m_\pp)}{\varphi(m_\pp/(m_\pp,j))}}{\abs{\pp}^{(1-1/m_\pp)/(1-1/\ell)}}\right)\left(1-\frac{1}{\abs{\pp}}\right)^{\ell-1},
\end{equation*}
while if $m$ is even, this must be multiplied by $2^{r_1}$. So if $m$ is odd, then
\begin{multline*}
A\left(\frac{1}{n^2(1-1/\ell)}\right) = \left(\frac{\kappa}{n^2(1-1/\ell)}\right)^{\ell-1} \cdot \frac{1}{m} \cdot \\ \sum_{\substack{0 \leq j < m \\ \ell \mid j}} \left(\prod_{\pp \text{ finite}} \left(1 + \frac{\ell-1}{\abs{\pp}}+ \sum_{\substack{m_\pp \mid m\\ m_\pp > \ell}}\frac{\mu\left(\frac{m_\pp}{(m_\pp,j)}\right) \frac{\varphi(m_\pp)}{\varphi(m_\pp/(m_\pp,j))}}{|\pp|^{(1-1/m_\pp)/(1-1/\ell)}}\right)\left(1-\frac{1}{\abs{\pp}}\right)^{\ell-1}\right),\end{multline*}
while if $m$ is even, this expression should be multiplied by $2^{r_1}$.  According to Theorem \ref{DelangeTheorem}, we have
\begin{equation}\label{Bob}
N_{m,n}(x) = \pr{\frac{A\pr{\frac{1}{n^2(1-1/\ell)}}}{\frac{1}{n^2(1-1/\ell)} \Gamma(\ell-1)} +o(1)} x^{\frac{1}{n^2(1-1/\ell)}} (\log{x})^{\ell-2} \quad \text{ as } x\to\infty.
\end{equation}
Comparing Equation (\ref{Bob}) with the definition of $\delta_{m,n}$ in the statement of the lemma, we see the proof is complete.
\end{proof}

We now prove Theorem \ref{thm:area1} from the introduction.

\begin{proof}[Proof of Theorem \ref{thm:area1}]
We view $N(x)$ as counting central simple algebras of the form $\MM(r,D)$ where $r=1$. To single these out, we make use of the well-known identity $\sum_{d \mid r} \mu(d) = 1$ if $r=1$ and $0$ otherwise. Writing $\sum_{A}$ for a sum on central simple algebras $A$ of dimension $n^2$, and $\sum_{A}^{(r)}$ for such a sum restricted to $A$ of the form $\MM(r,D)$, we find that
\[ N(x) = \sum\nolimits_{A}^{(1)}1 = \sum_{m \mid n} \mu(m) \sum_{\substack{r \mid n \\ m \mid r}}\sum\nolimits_{A}^{(r)} 1. \]
Writing $r^2 \dim(D) = n^2$, we see that $m$ divides $r$ if and only if $\dim(D) = d^2$ for divisor $d$ of $n/m$. Hence,
\[ \sum_{\substack{r \mid n \\ m \mid r}}\sum\nolimits_{A}^{(r)} 1 = N_{n/m,n}(x). \]
Putting the last two displays together, we find that
\[ N(x) = \sum_{m \mid n} \mu(m) N_{n/m,n}(x). \]
Replacing $m$ with $n/m$ gives the first statement in the lemma. The asymptotic formula \eqref{eq:Nxasymptotic} with
\[ \delta_n = \sum_{m \mid n} \mu(n/m) \delta_{m,n} \]
now follows from Lemma \ref{lem:keylem0}.

It remains to show that $\delta_n > 0$. Consider first the case when $n=\ell$ is prime. In that case, $\delta_n = \delta_{\ell,n}-\delta_{1,n} = \delta_{\ell,n}$. We used here that $\delta_{1,n}=0$ since $\ell \nmid 1$. From Lemma \ref{lem:keylem0}, $\delta_{\ell,n}$ is given by a product of nonzero factors together with
\[ \prod_{\pp\text{ finite}}\left(1+\frac{\ell-1}{\abs{\pp}}\right) \left(1-\frac{1}{\abs{\pp}}\right)^{\ell-1}. \]
This final product is absolutely convergent and contains only nonzero terms, and so also represents a nonzero real number. This settles the case when $n=\ell$.

Now we treat the case of general $n$. To prove that $\delta_n > 0$, it is enough to construct $\gg x^{\frac{1}{n^2(1-1/\ell)}} (\log{x})^{\ell-2}$ division algebras $A'/k$ of dimension $n^2$ with $\abs{\disc(A')} \le x$. The following crude argument suffices for this purpose. Fix (arbitrarily) finite primes $\pp_1,\dots,\pp_n$ of $k$. We first count division algebras $A/k$ of dimension $\ell^2$ which are unramified at any of $\pp_1, \dots, \pp_n$ and which satisfy $|\disc(A)| \leq X$. Without the ramification condition, we have just seen (the case $n=\ell$) that the number of these $A$ is
\begin{equation}\label{eq:positivedelta}
\gg X^{\frac{1}{\ell^2(1-1/\ell)}} (\log{X})^{\ell-2}
\end{equation}
for large $X$. An entirely analogous proof --- omitting the factors corresponding to $\pp=\pp_1, \dots, \pp_n$ from the Euler products appearing previously --- shows that this lower bound continues to hold with the restrictions on ramification imposed. Now for each such $A$, there is an associated $n^2$--dimensional division algebra $A'/k$ determined by enlarging the set of ramified primes to include $\pp_1, \dots, \pp_n$ and correspondingly enlarging the collection of Hasse invariants to include the numbers $a_{\pp_i}/m_{\pp_i} = 1/n$. (Note that the sum of the numbers $a_{\pp}/m_{\pp}$ for $\pp \mid \Ram(A')$ is one more than the corresponding sum over $\pp \mid \Ram(A)$, so is still an integer.) Clearly, distinct $A$ correspond to distinct $A'$. Moreover,
\[ \abs{\disc(A')} = \abs{\pp_1 \cdots \pp_n}^{n^2(1-1/n)} \abs{\disc(A)}^{(n/\ell)^2}. \]
Thus, $|\disc(A')|\le x$ precisely when $|\disc(A)|\le X$, where \[ X:= (x/|\pp_1 \cdots \pp_n|^{n^2(1-1/n)})^{(\ell/n)^2}. \]
Plugging this value of $X$ into \eqref{eq:positivedelta}, we see that we have constructed $\gg x^{\frac{1}{n^2(1-1/\ell)}} (\log{x})^{\ell-2}$ division algebras $A'/k$ of dimension $n^2$ with $|\disc(A')|\le x$, for all large $x$.
\end{proof}

\begin{ex}
The explicit expression for $\delta_n$ is, in general, exceedingly complicated. However, it can be written fairly compactly in certain special cases. To begin with, suppose that the smallest prime factor $\ell$ of $n$ is odd. If $n=\ell$, then $\delta = \delta_{\ell,n}$, where
\begin{equation}\label{eq:deltaelln}
\delta_{\ell,n} = \frac{\kappa^{\ell-1}}{\ell(\ell-2)!} \frac{1}{(n^2(1-1/\ell))^{\ell-2}} \prod_{\pp\text{ finite}}\left(1+\frac{\ell-1}{|\pp|}\right) \left(1-\frac{1}{|\pp|}\right)^{\ell-1}.
\end{equation}
Next, suppose that $n=\ell^2$. Then $\delta_n = \delta_{\ell^2,\ell^2}-\delta_{\ell,\ell^2}$. The second term can be calculated with \eqref{eq:deltaelln}, while
\begin{multline*}
\delta_{\ell^2,\ell^2} = \frac{\kappa^{\ell-1}}{\ell^2(\ell-2)!} \cdot \frac{1}{(\ell^4 (1-1/\ell))^{\ell-2}} \cdot \\ \left(\prod_{\pp\text{ finite}}\bigg(1 + \frac{\ell-1}{\abs{\pp}} +\frac{\ell(\ell-1)}{\abs{\pp}^{1+1/\ell}}\right)\left(1-\frac{1}{\abs{\pp}}\right)^{\ell-1}+ \\(\ell-1)\prod_{\pp\text{ finite}}\left(1 + \frac{\ell-1}{\abs{\pp}} -\frac{\ell}{\abs{\pp}^{1+1/\ell}}\right)\left(1-\frac{1}{\abs{\pp}}\right)^{\ell-1}\bigg).
\end{multline*}
Finally, suppose that $n=\ell \ell'$, where $\ell'$ is a prime larger than $\ell$. Then $\delta_n = \delta_{\ell \ell',\ell \ell'} - \delta_{\ell, \ell \ell'} - \delta_{\ell', \ell \ell'}$, where the second and third terms can be computed with \eqref{eq:deltaelln}, and
\begin{multline*}
\delta_{\ell \ell'} = \frac{\kappa^{\ell-1}}{\ell \ell' (\ell-2)!} \cdot \frac{1}{(\ell^2 \ell'^2 (1-1/\ell))^{\ell-2}} \cdot
\\ \bigg(\prod_{\pp\text{ finite}}\left(1+\frac{\ell-1}{\abs{\pp}}+\frac{\ell'-1}{|\pp|^{\frac{1-1/\ell'}{1-1/\ell}}}+ \frac{\ell(\ell-1)}{\abs{\pp}^{1+1/\ell}}\right)\left(1-\frac{1}{\abs{\pp}}\right)^{\ell-1} + \\
(\ell'-1) \prod_{\pp\text{ finite}}\left(1+\frac{\ell-1}{\abs{\pp}}-\frac{1}{\abs{\pp}^{\frac{1-1/\ell'}{1-1/\ell}}}- \frac{\ell'-1}{\abs{\pp}^{1+1/\ell}}\right)\left(1-\frac{1}{\abs{\pp}}\right)^{\ell-1}
\bigg).
\end{multline*}
If $\ell=2$, the same analysis applies, but all of these expressions for $\delta_n$ must be multiplied by $2^{r_1}$.
\end{ex}

\subsection{Proof of Theorem \ref{thm:countingquads}}

In this section, we fix a number field $k$ and a quaternion algebra $B$ defined over $k$ and count the number of quadratic extensions $L/k$ with norm of relative discriminant less than $X$ which embed into $B$. In what follows, denote by $\Delta_{L/k}$ the relative discriminant of $L$ over $k$. If $P$ is any property a quadratic extension of $k$ may have, we make the definition
\begin{equation}\label{eq:probdef}
\Prob(P):= \lim_{x\to\infty} \frac{\#\{\text{quadratic extensions $L/k$ for which $P$ holds and $\abs{\Delta_{L/k}}\leq x$}\}}{\#\{\text{quadratic extensions $L/k$ with $\abs{\Delta_{L/k}}\leq x$}\}},
\end{equation}
provided that this limit exists. The next result, which is a special case of results of Wood \cite{wood10}, asserts that for certain properties $P$ related to splitting behavior, these ``probabilities'' behave as one might naively expect.

\begin{prop}[Wood]\label{prop:wood}
Fix a finite collection $S$ of real or finite places of $k$. For each $\pp \in S$, let $P_\pp$ be one of the properties ``$\pp$ ramifies in $L$'', ``$\pp$ splits in $L$'', or ``$\pp$ is inert in $L$'', subject to the restriction that $P_\pp$ is one of the first two if $\pp$ is a real place. Then:
\begin{enumerate}
\item $\Prob(P_\pp)$ exists for each $\pp \in S$.
\item $\displaystyle\Prob(\text{all $P_\pp$ hold at once}) = \prod_{\pp \in S} \Prob(P_\pp)$.
\item If $\pp$ is real, then $\displaystyle\Prob(\pp\text{ ramifies}) = \Prob(\pp\text{ splits})= \frac{1}{2}$.
\item If $\pp$ is finite, then $\Prob(\pp\text{ splits})= \Prob(\pp\text{ is inert}) = \frac{1}{2} (1-\Prob(\pp\text{ ramifies}))$.
\end{enumerate}
\end{prop}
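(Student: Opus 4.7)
The plan is to realize each count as the residue at $s=1$ of a Dirichlet series whose multiplicative structure comes from class field theory, so that conditioning on local behavior at primes in $S$ factors out cleanly.

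First I would pass from extensions to characters. By class field theory, the set of quadratic extensions $L/k$ is in bijection with the nontrivial continuous characters $\chi \colon J_k/k^\times \to \{\pm 1\}$, and $\abs{\Delta_{L/k}}$ equals the norm of the conductor $\mathfrak{f}(\chi) = \prod_\pp \mathfrak{f}_\pp(\chi_\pp)$. Consequently, the generating Dirichlet series
\[ D(s) = \sum_{\chi \neq 1}\frac{1}{\abs{\mathfrak{f}(\chi)}^s}, \]
and its analogue $D_S(s)$ obtained by restricting to characters whose local component $\chi_\pp$ realizes the prescribed $P_\pp$ for each $\pp \in S$, can each be written as a finite sum of products of local Euler factors $D_\pp(s)$, one for each place of $k$. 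A direct character count gives $D_\pp(s) = 2$ at a real place, $D_\pp(s) = 2 + 2\abs{\pp}^{-s}$ at a finite prime $\pp \nmid 2$, and a polynomial in $\abs{\pp}^{-s}$ of similar shape at primes above $2$. Restricting to a particular local behavior amounts to replacing $D_\pp(s)$ by its sub-sum $D_{\pp, P_\pp}(s)$.

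Next, I would apply a standard Tauberian argument (Wiener--Ikehara, or Delange's theorem as recorded in Theorem \ref{DelangeTheorem}) to both $D(s)$ and $D_S(s)$. Each has a simple pole at $s = 1$ whose residue factors into local contributions, so that the number of $L$ satisfying all $P_\pp$ with $\abs{\Delta_{L/k}} \leq x$ is asymptotic to $c_S \cdot x$ and likewise for the unrestricted count. Dividing gives
\[ \Prob(\text{all }P_\pp\text{ hold}) = \prod_{\pp \in S} \frac{D_{\pp, P_\pp}(1)}{D_\pp(1)}, \]
which simultaneously establishes (i) and the multiplicativity (ii). Claim (iii) is then immediate: at a real place, both the trivial character (splits) and the sign character (ramifies) contribute weight $1$ to a total weight of $2$. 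For (iv), at each finite prime the split case corresponds to the trivial local character and the inert case to the unique nontrivial unramified local character; both have trivial conductor and hence local weight $1$, giving equal probabilities, and the remaining identity follows by complementarity.

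The main technical obstacle is that $D(s)$ does not literally factor as an Euler product, because the image of the restriction map from global quadratic characters to the product of local quadratic character groups is a subgroup of finite index, controlled by the unit group and the $2$--torsion in the class group of $k$. The standard remedy is to expand the sum over cosets of this subgroup, producing finitely many auxiliary Dirichlet series of the same analytic type; each is handled by the same Tauberian input, and the coset-counting factors appear identically in numerator and denominator of the probability and hence cancel. The secondary point, computing $D_\pp(s)$ at primes above $2$, is controlled by the finiteness of $k_\pp^\times/(k_\pp^\times)^2$ (whose order is bounded in terms of $[k_\pp \colon \Q_2]$) and does not affect the equality of split and inert probabilities, since those two cases still correspond to a single local character each.
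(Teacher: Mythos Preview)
Your proposal is essentially correct and sketches the class-field-theory/Tauberian argument that underlies Wood's theorem in the quadratic case. The identification of the main obstacle (global characters form only a finite-index subgroup of the product of local character groups) and its resolution via a coset decomposition is the right move, and your local computations for (iii) and (iv) are accurate.

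However, this is not what the paper does. The paper gives no proof at all in the usual sense: immediately after stating the proposition, it explains in a short paragraph how each part follows as a special case of Wood's general results on counting abelian extensions with local specifications \cite{wood10}. Specifically, it observes that for $G=\Z/2\Z$ all local specifications are viable and discriminant counting is a fair counting function, then cites \cite[Theorem~2.1]{wood10} for (i), \cite[Corollary~2.4]{wood10} for the independence in (ii), \cite[Corollary~2.2]{wood10} for (iii), and \cite[Corollary~2.3]{wood10} for (iv). So the paper treats the proposition as a black-box consequence of Wood's work rather than something to be reproved.

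Your approach buys self-containment and makes the mechanism transparent; the paper's approach is much shorter and avoids redoing analysis that Wood has already carried out in greater generality. If you intend your write-up to stand in for the paper's, you should either flag that the paper itself simply cites Wood, or tighten your sketch into an actual proof (in particular, the coset-sum step and the verification that the auxiliary series all have the same pole structure would need to be written out).
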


It is worth saying a few words about how Proposition \ref{prop:wood} follows from the more general results of Wood. In Wood's terminology, we are counting $\Z/2\Z$--extensions of $k$ with local specifications. We note however that Wood's definition of a local specification differs from our simplified picture above, but only in the sense that it is strictly finer; she allows one to specify the $k$--algebra $L\otimes_{k} k_\pp$, which gives us more information than we are measuring. When $G=\Z/2\Z$, all local specifications are viable (see \cite[start of \S2.2]{wood10}), and counting by discriminant defines a fair counting function (in the sense of \cite[\S2.1]{wood10}). The existence of each $\Prob(P_\pp)$ in Proposition \ref{prop:wood} can be seen as a special case of \cite[Thm 2.1]{wood10}. The independence result follows from \cite[Cor 2.4]{wood10}. The statement about the splitting behavior of real primes comes from \cite[Cor 2.2]{wood10}, while the statement about the behavior of finite primes is guaranteed by \cite[Cor 2.3]{wood10}.

Theorem \ref{thm:countingquads} follows easily from Proposition \ref{prop:wood} and the following estimate of Datskovsky and Wright \cite{DW88} for the denominator appearing in the definition \eqref{eq:probdef} (see \cite[\S2.2]{cohen02} for an alternative proof of this proposition).

\begin{prop}\label{prop:cohen}
The number of quadratic extensions $L/k$ with $\abs{\Delta_{L/k}} \leq x$ is $\sim \frac{1}{2^{r_2}} \frac{\kappa_k}{\zeta_k(2)} x$ as $x\to\infty$, where $\kappa_k$ denotes the residue at $s=1$ of $\zeta_k(s)$ and $r_2$ is the number of pairs of complex embeddings of $k$.
\end{prop}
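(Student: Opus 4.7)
The plan is to convert the counting problem into analyzing the generating Dirichlet series
\[ F(s) := \sum_{L/k \text{ quadratic}} \abs{\Delta_{L/k}}^{-s}. \]
Once one shows that $F(s)$ extends meromorphically past $\mathrm{Re}(s)=1$ with a single simple pole at $s=1$ of residue $\kappa_k/(2^{r_2}\zeta_k(2))$, a standard Tauberian theorem (such as Delange's, already invoked in the previous subsection, or the Wiener--Ikehara theorem) yields the claimed asymptotic.

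To analyze $F(s)$, I would invoke class field theory. Quadratic extensions of $k$ are in bijection with nontrivial continuous characters $\chi \colon J_k/k^* \to \{\pm 1\}$, and the conductor--discriminant formula gives $\abs{\mathfrak{f}_\chi} = \abs{\Delta_{L_\chi/k}}$. Therefore
\[ 1 + F(s) = \sum_{\chi^2 = 1} \abs{\mathfrak{f}_\chi}^{-s}, \]
with the $+1$ accounting for the trivial character. The sum on the right can be analyzed via local harmonic analysis: the characters factor as $\chi = \prod_v \chi_v$, subject to one global constraint from Artin reciprocity. At each finite prime $\pp \nmid 2$, the local group $k_\pp^*$ carries four characters of order dividing $2$: two unramified (with trivial local conductor) and two tamely ramified (with local conductor $\pp$), giving the local factor $2(1 + \abs{\pp}^{-s})$. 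Archimedean and dyadic primes contribute explicit finite factors, and the reciprocity relation cuts the naive product by $2$.

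Combining these ingredients expresses $1 + F(s)$ as an explicit constant times $\prod_{\pp \nmid 2}(1 + \abs{\pp}^{-s})$ times finitely many dyadic and archimedean corrections. Using the identity $\prod_\pp (1 + \abs{\pp}^{-s}) = \zeta_k(s)/\zeta_k(2s)$, one sees that $F(s)$ inherits from $\zeta_k(s)$ a simple pole at $s=1$. Extracting the residue then reduces to verifying that, after cancellation of all the constants coming from the global constraint, the archimedean contributions, and the dyadic local factors against the corresponding terms of $\zeta_k(2s)^{-1}$, one is left with exactly the constant $\kappa_k/(2^{r_2}\zeta_k(2))$.

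The main obstacle is precisely this last bookkeeping step, in particular the analysis at the $2$-adic primes, where $k_\pp^*/(k_\pp^*)^2$ has larger and less uniform $\mathbb{F}_2$-rank and where the local conductors of quadratic characters acquire higher powers of $\pp$ due to wild ramification. The archimedean side is more transparent: at each real place the local character group $\{\pm 1\}$ supplies a factor of $2$, while at each complex place only the trivial character exists, so no such factor appears; this asymmetry is precisely the mechanism producing the exponent $r_2$. The proofs of Datskovsky--Wright and of Cohen \cite{cohen02} perform the dyadic and archimedean bookkeeping in different formalisms (Shintani zeta functions versus direct parameterization by squarefree ideals and class-group data), but both arrive at the same residue.
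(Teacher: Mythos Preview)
The paper does not supply its own proof of this proposition: it is quoted directly from Datskovsky--Wright \cite{DW88}, with Cohen \cite[\S2.2]{cohen02} cited as an alternative reference. So there is no in-paper argument to compare against; the proposition functions purely as an input.

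Your sketch is essentially the strategy Cohen carries out in \cite{cohen02}: parametrize quadratic extensions by quadratic Hecke characters via class field theory, form the generating Dirichlet series, factor it locally, identify the Euler product with $\zeta_k(s)/\zeta_k(2s)$ up to finitely many correction factors, and read off the residue at $s=1$ with a Tauberian theorem. The Datskovsky--Wright approach is genuinely different, going through the Shintani zeta function of the prehomogeneous vector space of binary cubic forms (or its adelic avatar) rather than through Hecke characters, but it lands on the same constant.

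As written, however, your proposal is a plan rather than a proof: you explicitly flag the dyadic bookkeeping as the remaining obstacle and do not carry it out. That step is not merely clerical. At primes $\pp \mid 2$ the group $k_\pp^*/(k_\pp^*)^2$ has order $2^{2+e_\pp[k_\pp:\Q_2]}$ (with $e_\pp$ the ramification index), and the conductors of the wildly ramified quadratic characters depend on the filtration of the unit group, so the local factor at $\pp$ is a genuine polynomial in $\abs{\pp}^{-s}$ whose shape depends on $k_\pp$. Verifying that the product of these dyadic factors, the archimedean factors $2^{r_1}\cdot 1^{r_2}$, and the global halving from reciprocity combine with $\zeta_k(2)^{-1}$ to give exactly $2^{-r_2}\kappa_k/\zeta_k(2)$ requires either the explicit computation in \cite[\S2.2]{cohen02} or an equivalent mass formula for local quadratic extensions. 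Without that computation the residue is not determined, so the argument is incomplete as it stands.
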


\begin{proof}[Deduction of Theorem \ref{thm:countingquads}]
Recall that $L$ embeds into $B$ precisely when every prime dividing the discriminant of $B$ is non-split in $L$. The probability that a fixed real prime of $k$ ramifies in $L$ is $\frac{1}{2}$ (from Proposition \ref{prop:wood}(iii)), while the probability that a fixed finite prime of $k$ is inert or ramified in $L$ is (from Proposition \ref{prop:wood}(iv))
\[ \frac{1}{2}\pr{1+\Prob(\pp\text{ ramifies})} \geq \frac{1}{2}. \]
So from Proposition \ref{prop:wood}(ii),  the probability that $L$ embeds into $B$ exists and is at least $\frac{1}{2^{r'}}$, where $r'$ is the number of distinct places dividing the discriminant of $B$. Theorem \ref{thm:countingquads} now follows from the estimate of Proposition \ref{prop:cohen}.
\end{proof}

\subsection{Proof of Theorem \ref{thm:quatembedding}}

For a number field $k$ and a quadratic extension $L/k$, our present goal is to count the number of quaternion algebras over $k$ which have discriminant less than $x$ and which admit an embedding of $L$. In fact, we solve a more general problem. Specifically, in this subsection we prove Theorem \ref{thm:quatembedding} from the introduction.

\subsubsection{Setup}

From Theorem \ref{lem:allDAs}, a quaternion algebra $B/k$ is uniquely specified by a finite set $S \subset \Pp_k$ of real and finite places of $k$, along with a reduced fraction $0 < a_{\pp}/{m_{\pp}} < 1$ for each prime $\pp \in S$, where $\lcm_{\pp \in S}[m_{\pp}]=2$ and $\sum_{\pp \in S} a_{\pp}/{m_{\pp}} \in \Z$. The least common multiple condition forces each ${a_{\pp}}/{m_{\pp}} = 1/2$, and the integrality condition on the sum forces the cardinality of $S$ to be even.  We conclude that there is a bijection between quaternion algebras over $k$ and square-free moduli $\m$ of $k$ containing a nonzero even number of factors. Moreover, if $B$ corresponds to $\m$ under this bijection, then
\[ \disc(B) = \bigg(\prod_{\substack{\pp \text{ finite}\\ \pp \mid \m}} \pp\bigg)^2 \cdot \prod_{\substack{\pp \text{ real}\\ \pp \mid \m}} \pp. \]

Now let $\Qq$ be the set of finite or real primes of $k$ that do not split in any of the $L_i$.  Asking that all of our quadratic extensions $L_i$ embed into the quaternion algebra $B/k$ amounts to requiring that $\m$ only be divisible by primes residing in $\Qq$. We count the number of such $B$ with $\abs{\disc(B)} \le x$ by modifying the approach of the last section. We now provide the details. Define $G(s) = \frac{1}{2} (G_0(s) + G_1(s))$, where
\begin{equation}\label{eq:gdef}
G_0(s) = \prod_{\substack{\pp \text{ real}\\ \pp \in \Qq}}\left(1+\frac{1}{\abs{\pp}^s}\right) \prod_{\substack{\pp \text{ finite}\\ \pp \in \Qq}}\left(1+\frac{1}{\abs{\pp}^{2s}}\right)
\end{equation}
and
\begin{equation}\label{eq:gdef2}
G_1(s) = \prod_{\substack{\pp \text{ real}\\ \pp \in \Qq}}\left(1-\frac{1}{\abs{\pp}^s}\right) \prod_{\substack{\pp \text{ finite}\\ \pp \in \Qq}}\left(1-\frac{1}{\abs{\pp}^{2s}}\right).
\end{equation}
The infinite factors in the definitions of $G_0(s)$ and $G_{1}(s)$ are in fact independent of $s$. If $r_1'$ is the number of real primes of $k$ that do not split in any of the $L_i$, the contribution of the infinite factors is given by $2^{r_1'}$ and $0^{r_1'}$, respectively, where
\[ 0^{r_1'} = \begin{cases} 1, & r_1' = 0, \\ 0, & \text{otherwise}. \end{cases} \]
Now observe that if $G(s)$ is identified with its formal Dirichlet series expansion, then the coefficient of $N^{-s}$ counts  quaternion algebras $B/k$ with $\abs{\disc(B)}=N$ admitting an embedding of all $L_i$.

To estimate the partial sums of the $G(s)$--coefficients for $N\leq x$, we work with the corresponding sums for $G_0(s)$ and $G_1(s)$ individually. For $G_0(s)$, we will apply Theorem \ref{DelangeTheorem} to obtain an asymptotic formula with main term proportional to $x^{1/2}/(\log{x})^{1-\frac{1}{2^r}}$. We then use a result of Wirsing \cite[Satz 2]{wirsing} to show that the partial sums of the $G_1$--coefficients are in fact $o(x^{1/2}/(\log{x})^{1-\frac{1}{2^r}})$ as $x\to\infty$. Putting these estimates together yields Theorem \ref{thm:quatembedding}.

\subsubsection{The partial sums of the coefficients of $G_0(s)$}

Let us check that the hypotheses of Theorem \ref{DelangeTheorem} hold with $\rho = \frac{1}{2}$ and $\beta=\frac{1}{2^r}$. Conditions (i) and (ii) of that theorem are clear from the product definition of $G_0$, and so we may focus on (iii) and (iv). Let $L$ be the composite field of the $L_i$, for $i=1,2,\dots, r$. The essential idea is to express $G_0(s)$ in terms of benign factors and Artin $L$--functions attached to $\Gal(L/k)$. We now implement this idea. Our assumption that $[L_1 \cdots L_r: k]=2^r$ easily implies that $\Gal(L/k)$ is canonically isomorphic to the elementary abelian $2$--group
\[ \bigoplus_{i=1}^{r} \Gal(L_i/k) = \bigoplus_{i=1}^r \Z/2\Z = \pr{\Z/2\Z}^r. \]
For $1 \leq i \leq r$, let $\tilde{\chi}_i$ denote the unique character of $\Gal(L/k)$ whose kernel is the subgroup fixing $L_i$. For every subset $T$ of $\{1, \dots, r\}$, let $\tilde{\chi}_T = \prod_{i \in T}\tilde{\chi}_i$. It is a simple matter that $\tilde{\chi}_{\Ss}$ is nontrivial provided $T$ is non-empty. Consequently, the field $L_T$ left fixed by $\ker\tilde{\chi}_{T}$ is a quadratic extension of $k$ in the event $T$ is non-empty. For each finite prime $\pp$ of $k$, we let $\chi_{T}(\pp)=1, 0$, or $-1$ according to whether $\pp$ splits, ramifies, or remains inert in $L_T$. When $T$ consists of a single element $1 \leq i \leq r$, we will write $\chi_i$ instead of the more cumbersome $\chi_{\{i\}}$. In that notation, unless $\pp$ belongs to
\[ \Rr:=\{\text{finite primes of $k$ that ramify in $L$}\}, \]
we have the expression
\[ \chi_T(\pp) = \tilde{\chi}_{T}(\Frob(\pp)) = \prod_{i \in T}\tilde{\chi_i}(\Frob(\pp)) = \prod_{i \in T}\chi_i(\pp),\]
where $\Frob(\pp) \in \Gal(L/k)$ is the associated Frobenius automorphism for $\pp$.

Now we relate the $\chi_i$ to the definition of $G_0$. If $\pp$ is finite and not an element of $\Rr$, then
\[ \frac{1}{2^r} \prod_{i=1}^{r} (1-\chi_i(\pp)) = 1 \]
if $\pp \in \Qq$ and $=0$ otherwise. Hence, setting $\Qq_0:= \Qq \cap \Rr$, we have the expression
\begin{equation}\label{eq:g0expr}
G_0(s) = 2^{r_1'} \prod_{\pp \in \Qq_0} \left(1+\frac{1}{\abs{\pp}^{2s}}\right) \prod_{\substack{\pp\text{ finite}\\\pp \notin \Rr}} \left(1+ \frac{\frac{1}{2^r} \prod_{i=1}^{r}(1-\chi_i(\pp))}{|\pp|^{2s}}\right).
\end{equation}

For $\pp \notin \Rr$, we have
\begin{equation}\label{eq:givescancelation} \prod_{i=1}^{r}(1-\chi_i(\pp)) = \sum_{T \subset \{1, 2, \dots, r\}} (-1)^{\# T} \chi_T(\pp). \end{equation}
For $\Re(s) > 1/2$, let
\[ Z_0(s):= \prod_{\substack{\pp\text{ finite}\\\pp \notin \Rr}} \left(\left(1+ \frac{\frac{1}{2^r} \prod_{i=1}^{r}(1-\chi_i(\pp))}{\abs{\pp}^{2s}}\right)^{2^r} \prod_{T \subset \{1, 2, \dots, r\}}  \left(1-\frac{\chi_T(\pp)}{|\pp|^{2s}}\right)^{(-1)^{\# T}}\right). \]
Recalling that $\log(1+t) = \sum_{j \ge 1} (-1)^{j-1} t^{j}/j$ for $|t| < 1$, we see that
\begin{align*} \log Z_0(s) &= \sum_{\substack{\pp\text{ finite}\\\pp \notin \Rr}} \sum_{j \ge 1} \left(\frac{(-1)^{j-1}}{j} 2^r \left(\frac{1}{2^r} \prod_{i=1}^{r}(1-\chi_i(\pp))\right)^j -\sum_{T \subset \{1, 2, \dots, r\}}\frac{(-1)^{\#T} \chi_T(\pp)^{j}}{j}\right)\abs{\pp}^{-2js} \\
&= \sum_{\substack{\pp, j\\\pp\text{ finite},~\pp \notin \Rr \\ j\ge 2}} \left(\frac{(-1)^{j-1}}{j} 2^r \left(\frac{1}{2^r} \prod_{i=1}^{r}(1-\chi_i(\pp))\right)^j -\sum_{T \subset \{1, 2, \dots, r\}}\frac{(-1)^{\#T} \chi_T(\pp)^{j}}{j}\right)\abs{\pp}^{-2js}. \end{align*}
Here we used \eqref{eq:givescancelation} to pass from the first line to the second. The final  sum is absolutely convergent for $\Re(s) > \frac14$, and any arrangement of the sum converges uniformly on compact subsets of $\Re(s) > \frac{1}{4}$. Thus, $\log Z_0(s)$ continues analytically to $\Re(s) > \frac14$, implying that $Z_0(s)$ can be extended to a function that is analytic and nonzero there.

For each $T \subset \{1, 2, \dots, r\}$ and all $s$ with real part greater than $1$, put
\[ L(s,\chi_{T}) = \prod_{\pp\text{ finite}} \left(1-\frac{\chi_{T}(\pp)}{\abs{\pp}^s}\right)^{-1}. \]
$L(s,\chi_{T})$ is the Artin $L$--function attached to the character $\tilde{\chi}_T$ of $\Gal(L/k)$. When $T=\emptyset$, we have $L(s,\chi_{T})= \zeta_k(s)$, and for all other choices of $T$, the function $L(s,\chi_{T})$ is analytic and nonzero for $\Re(s) \geq 1$.

We chose $Z_0(s)$ so that
\[
\prod_{\substack{\pp\text{ finite}\\\pp \notin \Rr}} \left(1+ \frac{\frac{1}{2^r} \prod_{i=1}^{r}(1-\chi_i(\pp))}{\abs{\pp}^{2s}}\right)^{2^r} = Z_0(s) \prod_{T \subset \{1, 2, \dots, r\}} \left(L(2s,\chi_{T}) \prod_{\pp \in \Rr} \left(1-\frac{\chi_T(\pp)}{\abs{\pp}^{2s}}\right)\right)^{(-1)^{\# T}}.
\]
The right-hand products over $\pp \in \Rr$ are analytic and nonzero for $\Re(s) > 0$. Keeping in mind that $L(s,\chi_T)$ is analytic and nonzero for $\Re(s) \ge 1$ as long as $T\neq \emptyset$, we see that the expression
\begin{equation}\label{eq:longexpression}
\pr{s-\frac{1}{2}} Z_0(s) \prod_{T \subset \{1, 2, \dots, r\}} \pr{L(2s,\chi_{T}) \prod_{\pp \in \Rr} \pr{1-\frac{\chi_T(\pp)}{\abs{\pp}^{2s}}}}^{(-1)^{\# T}}
\end{equation}
is analytic and nonzero for $\Re(s) \geq \frac{1}{2}$. We note for the reader that the factor of $s-\frac{1}{2}$ here is used to cancel the simple pole of $\zeta_k(2s)$ at $s=\frac{1}{2}$. A function which is nonzero and analytic on a simply connected domain possesses an analytic logarithm on the same domain, and hence also an analytic $N$th root for every $N$. (See, e.g., \cite[Thm 13.11, p.~274]{Rudin}.) In particular, we can extract a $2^r$th root $H_0(s)$ of \eqref{eq:longexpression} which is also analytic and nonzero in $\Re(s) \geq \frac{1}{2}$. The choice of $H_0$ is uniquely specified if we insist that $H_0(s) > 0$ for $s > \frac{1}{2}$. Referring back to \eqref{eq:g0expr}, we find that for $\Re(s) > \frac{1}{2}$,
\[ G_0(s) = \frac{1}{(s-\frac{1}{2})^{1/2^r}} \left(2^{r_1'} \cdot \prod_{\pp \in \Qq_0}\left(1+\frac{1}{\abs{\pp}^{2s}} \right) \cdot H_0(s)\right), \]
where $(s-\frac{1}{2})^{1/2^r}$ is the principal $2^r$th root. This immediately implies (iii). If we let
\[ A_0(s):= 2^{r_1'} \cdot \prod_{\pp \in \Qq_0}\left(1+\frac{1}{\abs{\pp}^{2s}} \right) \cdot H_0(s), \]
we see that $G_0(s)$ satisfies (iv) with $\rho=\frac{1}{2}$, $\beta=\frac{1}{2^r}$, $A(s) = A_0(s)$, and $B(s)=0$.

So by Theorem \ref{DelangeTheorem}, the sum up to $x$ of the coefficients of $G_0(s)$ is asymptotic to
\[ \frac{A_0\pr{\frac{1}{2}}}{\frac{1}{2} \Gamma\pr{\frac{1}{2^r}}} x^{\frac{1}{2}}/(\log{x})^{1-\frac{1}{2^r}} \quad \text{ as } x \to\infty. \]

\subsubsection{The coefficients of $G_1(s)$}

We now show that the contribution from the coefficients of $G_1(s)$ is negligible. Define arithmetic functions $a(N)$ and $b(N)$ by expanding
\[ \prod_{\substack{\pp\text{ finite} \\ \pp \in \Qq}} \left(1+\frac{1}{\abs{\pp}^s}\right) = \sum_{N=1}^{\infty}\frac{a(N)}{N^s} \quad\text{and}\quad \prod_{\substack{\pp\text{ finite} \\ \pp \in \Qq}} \left(1-\frac{1}{\abs{\pp}^s}\right) = \sum_{n=1}^{\infty}\frac{b(N)}{N^s}. \]
The functions $a(N)$ and $b(N)$ are multiplicative and satisfy $|b(N)| \leq a(N)$ for all $N$. Referring back to the earlier definitions of $G_0$ and $G_1$, we see that the partial sum of the $G_0(s)$--coefficients up to $x$ is given by
\[ 2^{r_1} \sum_{N \leq \sqrt{x}} a(N), \]
while that of the $G_1$--coefficients is given by
\[ 0^{r_1} \sum_{N \leq \sqrt{x}} b(N). \]
Thus, if we can show that
\begin{equation}\label{eq:smallerthan}
\sum_{N \leq x} b(N) = o\pr{\sum_{N \leq x} a(N)}\qquad \text{as $x\to\infty$},
\end{equation}
then the partial sums of the $G_1$--coefficients are $o(x^{1/2}/(\log{x})^{1-\frac{1}{2^r}})$, as desired. For that task, we use the following result, which is a slight variant of a theorem of Wirsing \cite[Satz 2]{wirsing}.

\begin{prop}[Wirsing]\label{prop:wirsing}
Let $a(N)$ be a multiplicative function taking only nonnegative values. Assume
\begin{enumerate}
\item[(i)] there is a constant $\tau > 0$ for which $\sum_{p \leq x} a(p) \log{p} = (\tau + o(1)) x$ as $x\to\infty$,
\item[(ii)] $a(p^\ell)$ is bounded uniformly on prime powers $p^\ell$ with $\ell\geq 2$.
\end{enumerate}
 Let $b(N)$ be a complex-valued multiplicative function satisfying $\abs{b(N)} \leq a(N)$ for all $N$. Suppose moreover that
\begin{enumerate}
\item[(iii)] there is a constant $\tau'\neq \tau$ with $\sum_{p \leq x} b(p) \log{p} = (\tau'+o(1)) x$ as $x\to\infty$,
\end{enumerate}
then $\displaystyle\sum_{N \leq x} b(N) = o\left( \sum_{N \leq x} a(N)\right)$.
\end{prop}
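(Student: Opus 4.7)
The plan is to derive Proposition~3.3 by combining a mean-value asymptotic for $\sum_{N\le x} a(N)$, supplied by Wirsing's earlier theorem (Satz~1 of \cite{wirsing}) in the same paper, with a Selberg-type convolution argument that exploits the mismatch $\tau'\ne\tau$ to extract cancellation in $\sum_{N\le x} b(N)$.

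I would begin by applying Wirsing's Satz~1, whose hypotheses are exactly our (i) and (ii), to obtain
\[
\sum_{N\le x} a(N) \;\sim\; \frac{e^{-\gamma\tau}}{\Gamma(\tau)}\cdot\frac{x}{\log x}\prod_{p\le x}\Big(1+\frac{a(p)}{p}+\frac{a(p^2)}{p^2}+\cdots\Big),
\]
which is of order $F_a(x):=x(\log x)^{\tau-1}P(x)$ for a positive product $P(x)$. This pins down the order of magnitude of the denominator in the desired $o$-estimate, and I would cite it as a black box rather than reprove it.

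For the numerator $F_b(x):=\sum_{N\le x} b(N)$, the strategy is to start from the Selberg convolution identity
\[
b(N)\log N \;=\; \sum_{\substack{p^k m=N\\ (m,p)=1}} b(m)\,b(p^k)\log(p^k),
\]
sum over $N\le x$, invoke hypothesis~(iii) to estimate the inner prime-power sum, and use partial summation to return from $\sum b(N)\log N$ to $\sum b(N)$. This produces a Volterra-type integral inequality for $|F_b|$ in which the coefficient $\tau'$ appears in precisely the slot where the analogous manipulation applied to $a$ would place $\tau$. Matching powers of $\log x$ on both sides shows that any contribution of size $F_a(x)$ survives in $F_b$ only with a coefficient that vanishes when $\tau'=\tau$; iterating the integral identity then upgrades the initial comparison to $F_b(x)=o(F_a(x))$.

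The main obstacle is the case in which $b$ is complex and $|b(p)|$ is close to $a(p)$ on a set of primes of positive density, so that $\tau$ and $\tau'$ differ only through phase oscillation and the triangle inequality loses too much. Here I would resort to a Hal\'asz-type pretentious distance argument: introduce
\[
\mathbb{D}(a,b;x)^2 \;:=\; \sum_{p\le x}\frac{a(p)-\mathrm{Re}\,b(p)}{p},
\]
observe that (i) together with (iii) force $\mathbb{D}(a,b;x)^2\gg|\tau-\tau'|\log\log x\to\infty$, and invoke a Hal\'asz-type inequality of the shape $|F_b(x)|\ll F_a(x)\cdot(\mathbb{D}^2+1)e^{-\mathbb{D}^2}$. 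The decay of this bound to zero furnishes the required $o$-estimate in the phase-oscillation regime, completing the proof.
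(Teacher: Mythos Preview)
The paper does not prove Proposition~\ref{prop:wirsing} at all: it is stated as a slight weakening of Wirsing's Satz~2 in \cite{wirsing}, and the only justification given is a pointer to the remarks following Satz~2 in that reference, which explain why condition~(7) there can be replaced by the simpler hypothesis $\tau\neq\tau'$. So your proposal is not being compared against a proof in the paper but against a black-box citation.

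That said, your sketch has real gaps if it is meant to stand on its own. The middle paragraph is the heart of the matter, and it is too vague: the phrase ``matching powers of $\log x$ on both sides shows that any contribution of size $F_a(x)$ survives in $F_b$ only with a coefficient that vanishes when $\tau'=\tau$'' does not describe a mechanism. The Selberg--Wirsing convolution does yield an approximate integral equation for $F_b(x)\log x$, but extracting $F_b=o(F_a)$ from it requires a genuine argument (this is precisely the content of Wirsing's Satz~2), not a comparison of exponents. Your final paragraph then introduces a second, apparently independent, argument via a Hal\'asz-type inequality $|F_b|\ll F_a\cdot(\mathbb{D}^2+1)e^{-\mathbb{D}^2}$, but you do not say where this inequality comes from in the generality $|b|\le a$ with $a$ unbounded; the standard Hal\'asz machinery is for functions bounded by~$1$, and extending it to this setting is nontrivial and not something one can simply invoke. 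It is also unclear why you need two separate arguments, or what the case division is between them.

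If your goal is only to match the paper, the correct move is simply to cite \cite[Satz~2]{wirsing} and the remarks after it. If you want an independent proof, the cleanest route is to follow Wirsing's own argument for Satz~2 carefully rather than to improvise a hybrid of convolution and pretentious methods.
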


Here we have replaced the original condition (7) in \cite[Satz 2]{wirsing} with the simpler condition that $\tau \neq \tau'$; this is justified in the remarks immediately following the statement of Satz 2 in \cite{wirsing}. Wirsing's original formulation also assumes asymptotics for the sums $\sum_{p \leq x} f(p)$ rather than $\sum_{p \leq x} f(p) \log p$. The conditions that Wirsing needs follow from our (i), (iii) after applying partial summation. (A similar partial summation argument can be seen in the second displayed equation in the proof of Theorem 4.4 on p. 79 of \cite{apostol}.)


 These remarks allow us to replace condition (7) in \cite[Satz 2]{wirsing} with the simpler condition that $\tau \neq \tau'$. We now return to deducing (\ref{eq:smallerthan}).

\begin{proof}[Proof of \eqref{eq:smallerthan}]
Let us check that the hypotheses of Proposition \ref{prop:wirsing} are satisfied for our choice of $a(N)$ and $b(N)$ above. We have
\[ \sum_{p \leq x} a(p) \log{p} = \sum_{p \leq x} \pr{\sum_{\substack{\pp \in \Qq, \text{ finite} \\ \abs{\pp}=p}} \log\abs{\pp}} = \sum_{\substack{\pp \in \Qq, \text{ finite} \\ \pp\text{ abs. degree $1$}\\ \abs{\pp}\leq x}} \log\abs{\pp} = \sum_{\substack{\pp \in \Qq, \text{ finite} \\ \abs{\pp}\leq x}} \log\abs{\pp} + O(x^{1/2}). \]
Now $\pp \in \Qq$ if and only if $\pp$ does not split in any of the $L_i$. Excluding the finitely many primes in $\Rr$, this condition on $\pp$ is equivalent to the requirement that $\Frob(\pp)$ not restrict to the identity on $L_i$ for any $i=1,2,\dots, r$. Recalling that
\[ \Gal(L/k) \cong \prod_{i=1}^{r}\Gal(L_i/k), \]
we see that this condition uniquely determines $\Frob(\pp)$. Since $\#\Gal(L/k)=2^r$, we can apply partial summation along with the Chebotarev density theorem for natural density (see \cite[Satz 4]{artin}) to obtain
\[ \sum_{\substack{\pp \in \Qq, \text{ finite} \\ \abs{\pp}\leq x}} \log\abs{\pp} = \left(\frac{1}{2^r}+o(1)\right) x \quad \text{ as } x\to\infty. \]
(An analogous partial summation argument appears in the first displayed equation in the proof of Theorem 4.4 on p. 79 of \cite{apostol}; we use the Chebotarev prime counting function from \cite[Satz 4]{artin} in lieu of $\pi(x)$.) So (i) holds with $\tau = \frac{1}{2^r}$.

For each prime power $p^\ell$,
\[ a(p^\ell) \leq \#\set{\text{square-free ideals of $\calO_k$ of norm $p^\ell$}}. \]
However, any square-free ideal of norm $p^\ell$ must be a square-free product of the primes lying above $p$, and there are at most $2^{[k:\Q]}$ such products. This gives (ii).

Finally, our work towards (i) shows that
\[ \sum_{p \leq x} b(p) \log{p} = -\sum_{p \leq x} \sum_{\substack{\pp \in \Qq, \text{ finite} \\ \abs{\pp}=p}} \log\abs{\pp} = -\left(\frac{1}{2^r} + o(1)\right) x \quad \text{ as } x\to\infty. \]
Hence, (iii) holds with $\tau' = -\frac{1}{2^r}$.
\end{proof}

\subsubsection{Denouement}

Since $G(s) = \frac{1}{2}(G_0(s) + G_1(s))$, combining the results of the previous two sections shows that the number of quaternion algebras $B/k$ admitting embeddings of all of the $L_i$ and having $\abs{\disc(B)} \le x$, is asymptotically
\[  \frac{A_0\pr{\frac{1}{2}}}{\Gamma\pr{\frac{1}{2^r}}} \cdot x^{1/2}/(\log{x})^{1-\frac{1}{2^r}}. \]
The leading coefficient here is nonzero and can be given explicitly in terms of the leading terms in the Laurent series expansions for the functions $L(s,\chi_{T})$. Specifically, tracing back through the definitions, we see that with $\kappa$ equal to the residue at $s=1$ of $\zeta_k(s)$,
\begin{multline*}
\frac{A_0\pr{\frac{1}{2}}}{\Gamma\pr{\frac{1}{2^r}}} = \frac{2^{r_1'-\frac{1}{2^r}}}{\Gamma\pr{\frac{1}{2^r}}} \prod_{\pp \in \Qq_0} \left(1+\frac{1}{\abs{\pp}}\right) \cdot \\ \left(\kappa \prod_{\pp \in \Rr}\left(1-\frac{1}{\abs{\pp}}\right) \prod_{\substack{T\subset\{1, 2, \dots, r\} \\ T \neq \emptyset}} \left(L(1,\chi_{T}) \prod_{\pp \in \Rr}\left(1-\frac{\chi(\pp)}{\abs{\pp}}\right)\right)^{(-1)^{\# T}} \right)^{1/2^r} \cdot Z^{1/2^r},
\end{multline*}
where
\begin{align*}
Z :&= Z_0\left(\frac{1}{2}\right) = \prod_{\substack{\pp\text{ finite}\\\pp \notin \Rr}} \left(\left(1+ \frac{\frac{1}{2^r} \prod_{i=1}^{r}(1-\chi_i(\pp))}{\abs{\pp}}\right)^{2^r} \prod_{T \subset \{1, 2, \dots, r\}}  \left(1-\frac{\chi_T(\pp)}{\abs{\pp}}\right)^{(-1)^{\#T}}\right).
\end{align*}
It is clear that in general, the explicit form of this leading coefficient is rather complicated, but in the case $r=1$ it simplifies nicely to the formula given in Example \ref{ex:quatembeddingexample}.

\begin{rmk} Without the assumption that $[L_1 \cdots L_r:k]= 2^r$, it is possible for there to be \emph{no} quaternion division algebras $B/k$ into which all of the $L_i$ embed. For example, take $k=\Q$ and consider the collection of $L_i$ given by $\Q(\sqrt{-3})$, $\Q(\sqrt{-1})$, $\Q(\sqrt{3})$, $\Q(\sqrt{10})$, $\Q(\sqrt{17})$. One can check that every finite prime of $\Q$ splits in at least one of these $L_i$, and so a quaternion algebra into which all of these fields embeds must have a discriminant not divisible by any finite prime at all. As a quaternion algebra must ramify at a finite even number of primes, up to isomorphism, the only quaternion algebra admitting embeddings of all of the above extensions is $\MM(2,\Q)$, which is not a division algebra.
	
For the general situation, we proceed as follows. Suppose we are given a finite collection of distinct quadratic extensions $L_i/k$. Let $L$ be the compositum of all of the $L_i$, and define $r$ by the condition $[L:k] = 2^r$. Note, we are not assuming here that $r$ is the total number of $L_i$. For each $i$, let $\tilde{\chi}_i$ be the character of $\Gal(L/k)$ whose kernel is the subgroup fixing $L_i$. For there to be infinitely many quaternion algebras $B/k$ into which all of the $L_i$ embed, it is necessary and sufficient that there is no finite \textbf{odd order} subset of the $\tilde{\chi}_i$ which multiply to the identity. If this condition holds, then the number of $B/k$ with $\abs{\disc(B)}\le x$ into which all of the $L_i$ embed is again asymptotic to $\delta x^{1/2}/(\log{x})^{1-\frac{1}{2^r}}$ for some positive $\delta > 0$. This can be established by slight modifications to the proof of Theorem \ref{thm:quatembedding}.
\end{rmk}

\section{Main tools: Geometric counting results}\label{Maintools:Geometriccount}

In this section, we derive the geometric counting results from the introduction using the tools from the previous section.

\subsection{Proof of Corollaries \ref{cor:area1cor2} and \ref{cor:area1cor1}}

As an application of Theorem \ref{thm:area1} we consider the problem of counting commensurability classes of arithmetic hyperbolic 2-- and $3$--manifolds with a fixed trace field $k$. As Selberg's lemma ensures every complete, finite volume hyperbolic $n$--orbifold has a finite manifold cover, counting commensurability classes of arithmetic orbifolds is the same as counting commensurability classes of arithmetic manifolds. Consequently, we will not fret about whether our representatives are manifolds or orbifolds. It is well-known \cite[Ch 11]{MR} that given such a commensurability class $\scrC$, there is a real number $V_{\scrC}>0$ which occurs as the smallest volume achieved by an orbifold belonging to this class. A consequence of Borel's classification of maximal arithmetic Fuchsian and Kleinian groups and their volumes \cite{borel-commensurability} is that we can derive a precise formula for $V_{\scrC}$ in terms of the number theoretic invariants of $\scrC$. The proofs of Corollary \ref{cor:area1cor2} and Corollary \ref{cor:area1cor1} will rely crucially on Theorem \ref{CommensurabilityInvariants}. Namely, every commensurability class of arithmetic hyperbolic $2$-- or $3$--manifolds both determines and is determined by the associated trace field and quaternion algebra.

We begin by proving a lemma which bounds the norm of the discriminant of the quaternion algebra of a compact arithmetic hyperbolic $2$-- or $3$--manifold as a function of the volume $V$ of the manifold.

\begin{lem}\label{lem:Bdiscbound}
Let $M$ be a compact arithmetic hyperbolic 2--manifold (resp., $3$--manifold) of volume $V$ with trace field $k$ and quaternion algebra $B$. Then $\abs{\disc(B)}\leq \left[10^{93}V^{13}\right]^{10}$ (resp., $\abs{\disc(B)}\leq 10^{57}V^{7}$).
\end{lem}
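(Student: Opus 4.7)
The proof hinges on Borel's explicit volume formula \cite{borel-commensurability} combined with Odlyzko-type lower bounds on the root discriminant of $k$. By Theorem \ref{CommensurabilityInvariants}, the pair $(k,B)$ determines the commensurability class of $M$, and any representative --- in particular $M$ itself --- has volume at least the minimum covolume $V_{\min}(k,B)$ in this class. Borel's formula gives an explicit expression for $V_{\min}(k,B)$ as a product of $|d_k|^{3/2}$, $\zeta_k(2)$, the ramification factor $\prod_{\mathfrak{p}\in\Ram_f(B)}(|\mathfrak{p}|-1)$, and factors of powers of $\pi$ depending on $n_k$ coming from the local measures at the archimedean and unramified finite places. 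The plan is to invert this formula to solve for $|\disc(B)|$ in terms of $V$.

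The first step is to obtain a clean inequality of the form
\[ V \geq c_0 \cdot \frac{|d_k|^{3/2}|\disc(B)|^{\alpha}}{M^{n_k}} \]
for explicit positive constants $c_0$, $\alpha$, and $M>1$. Using $\zeta_k(2)\geq 1$ and the factorization
\[ \prod_{\mathfrak{p}\in\Ram_f(B)}(|\mathfrak{p}|-1) = |\disc(B)|^{1/2}\prod_{\mathfrak{p}\in\Ram_f(B)}\pr{1-\tfrac{1}{|\mathfrak{p}|}}, \]
the key elementary point is to bound the tail product below. I would separate the primes of $\Ram_f(B)$ lying above $2$ (of which there are at most $n_k$, contributing a factor $2^{-n_k}$) from the primes of norm $\geq 3$ (where $1-1/|\mathfrak{p}|\geq 2/3$ and the number of such primes is at most $\tfrac{1}{2}\log_3|\disc(B)|$), yielding an estimate $\prod(|\mathfrak{p}|-1)\geq |\disc(B)|^{\alpha}/2^{n_k}$ for some explicit $\alpha>1/4$.

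Second, I would invoke an unconditional Odlyzko-type bound $|d_k|^{1/n_k}\geq C_0$, handling the small-degree cases by direct inspection, with $C_0$ large enough that $C_0^{3/2}>M$. This upgrades the previous inequality to $V\geq c_1\cdot \lambda^{n_k}|\disc(B)|^{\alpha}$ with $\lambda>1$, which simultaneously forces $n_k \ll \log V$ and $|\disc(B)|\ll V^{1/\alpha}$. For $3$-manifolds $k$ has a complex place, giving the stronger Odlyzko constant and hence the smaller exponent $7$; for surfaces $k$ is totally real, giving a weaker constant and the considerably larger exponent $130$.

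The main obstacle is not conceptual but bookkeeping to extract the explicit numerical constants $10^{57}$ and $10^{930}$ and the exponents $7$ and $130$ stated. One must track the precise Borel constants (including index contributions for non-maximal orders and the exact power of $\pi^2$), an explicit unconditional Odlyzko bound, and an explicit choice of $\alpha$ in the auxiliary lower bound for $\prod(|\mathfrak{p}|-1)$. These choices are coupled: enlarging $\alpha$ forces a smaller multiplicative constant in that auxiliary bound, which in turn inflates $M$ and demands a larger Odlyzko constant. Optimizing this balance --- and separating off the finitely many small-degree fields where Odlyzko's bounds are weaker than their asymptotic form --- is the most delicate part of the argument.
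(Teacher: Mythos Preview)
Your strategy --- invert the minimal-volume formula against $|\disc(B)|$ --- is exactly the paper's, but there is a genuine gap in how you state that formula. The minimal covolume $V_{\min}(k,B)$ is \emph{not} given just by $d_k^{3/2}\zeta_k(2)\prod(|\pp|-1)/(\text{power of }\pi)^{n_k}$: Borel's maximal arithmetic groups can properly contain $\Gamma_\calO$, and the Chinburg--Friedman formula \cite{chinburg-smallestmanifold} (equation \eqref{equation:minvol1} in the paper) carries an extra factor $[k_B:k]$ in the denominator, where $k_B$ is a certain class field. This factor is bounded by $2^{r_1}h_k$, so it is not $O(1)$; the paper handles it via the explicit class-number bound $h_k \leq 242\, d_k^{3/4}$ from \cite{linowitz-isospectralsize}, at the cost of a power of $d_k$. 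Your stated inequality $V \geq c_0\,|d_k|^{3/2}|\disc(B)|^{\alpha}/M^{n_k}$ is therefore unjustified as written.

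Once that correction is made, your Odlyzko plan runs into numerical trouble: after losing $d_k^{3/4}$ to the class number you need $C_0^{3/4} > M$ rather than $C_0^{3/2} > M$, and with $M$ on the order of $8\pi^2$ the unconditional root-discriminant constants are not large enough to give $\lambda > 1$. The paper sidesteps this entirely: it drops the remaining $d_k^{3/4}$, then instead of trying to absorb $(24\pi^2)^{n_k}$ into $d_k$, it invokes the ready-made bound $n_k \leq 23 + \log V$ of Chinburg--Friedman \cite[Lemma~4.3]{chinburg-smallestorbifold}, which converts $(24\pi^2)^{n_k}$ directly into a fixed power of $V$ (this is where the exponent $7$, and analogously $130$, comes from). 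The paper also uses $\zeta_k(2) \geq (4/3)^{\omega_2(B)}$ rather than $\zeta_k(2)\geq 1$ to partially offset the norm-$2$ primes, which tightens the constants.
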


\begin{proof}
We establish the lemma for $3$--manifolds as the case of hyperbolic surfaces is similar. Towards that goal, set $V^\prime$ to be the covolume of a minimal covolume maximal arithmetic subgroup in the commensurability class associated to $B$ and $k$. It is known by Chinburg--Friedman \cite[pp. 8]{chinburg-smallestmanifold} that
\begin{equation}\label{equation:minvol1}
V^\prime=\frac{2\pi^2\zeta_k(2)d_k^{\frac{3}{2}}\Phi(\disc(B))}{(4\pi^2)^{n_k}[k_B:k]},
\end{equation}
where
\[ \Phi(\disc(B))=\prod_{\pp \mid \disc(B)}\pr{\frac{\abs{\pp}-1}{2}} \]
and $k_B$ is the maximal abelian extension of $k$ which has $2$--elementary Galois group, is unramified at all finite primes of $k$ and in which all (finite) prime divisors of $\disc(B)$ split completely. As $k_B$ is contained in the strict class field of $k$,
\[ [k_B:k]\leq 2^{r_1(k)}h_k=2^{n_k-2}h_k. \]
Let $\omega_2(B)$ denote the number of prime divisors of $\disc(B)$ which have norm $2$. From the Euler product expansion of $\zeta_k(s)$, we deduce that $\zeta_k(2)\geq (\frac{4}{3})^{\omega_2(B)}$. In combination with (\ref{equation:minvol1}), we conclude that
\begin{equation}\label{Local13}
V^\prime \geq\frac{(\frac{4}{3})^{\omega_2(B)}d_k^{\frac{3}{2}}\abs{\disc(B)}}{(4\pi^2)^{n_k}4^{\omega_2(B)}2^{n_k-2}h_k} \geq \frac{d_k^{\frac{3}{2}}\abs{\disc(B)}}{(8\pi^2)^{n_k}3^{\omega_2(B)}h_k}.
\end{equation}
Now, we have the trivial bound $\omega_2(B)\leq n_k$ and the inequality
$h_k\leq 242d_k^{\frac{3}{4}}$
found in \cite[Lemma 3.1]{linowitz-isospectralsize}. Coupling these two inequalities with (\ref{Local13}) produces
\begin{equation}\label{Local14}
V^\prime \geq \frac{d_k^{\frac{3}{2}}\abs{\disc(B)}}{242(24\pi^2)^{n_k}d_k^{\frac{3}{4}}} \geq\frac{\abs{\disc(B)}}{242(24\pi^2)^{n_k}}.
\end{equation}
Our proof is now complete upon applying \cite[Lemma 4.3]{chinburg-smallestorbifold}, which implies that $n_k\leq 23+\log(V^\prime)$, to (\ref{Local14}) in tandem with the fact that $V\geq V^\prime$.
\end{proof}


\begin{proof}[Proof of Corollary \ref{cor:area1cor2}]
Let $k$ be a totally real number field and $B$ be a quaternion division algebra over $k$ which is ramified at all but one real places of $k$. If $\rho\colon B\to \MM(2,\R)$ is a representation and $\calO$ is an order of $B$, then it is easy to see that the trace field of $\Gamma_\calO$ is $k$; recall that $\Gamma_\calO$ is defined to be $P\rho(\calO^1)$. It is similarly clear that the quaternion algebra of $\Gamma_\calO$ is $B$. Namely, since this algebra is a quaternion algebra over $k$ that is visibly contained in $B$, the asserted isomorphism follows from comparing dimensions. By definition, a Fuchsian group is arithmetic if it is commensurable with a group of the form $\Gamma_\calO$, hence by Lemma \ref{lem:Bdiscbound} and the preceding discussion, to prove Corollary \ref{cor:area1cor2} it suffices to bound the number of quaternion division algebras $B$ over $k$ which are ramified at all real places of $k$ and satisfy $\abs{\disc(B)}\leq \left[10^{93}V^{13}\right]^{10}$. The corollary now follows from Theorem \ref{thm:area1}.
\end{proof}

The proof of Corollary \ref{cor:area1cor1} is similar and is left to the reader.

\subsection{Lengths of geodesics arising from quadratic extensions}

We begin this subsection with a result that will permit us to work with Kleinian groups derived from a quaternion algebra.

\begin{prop}\label{prop:derivedindex}
Let $\Gamma$ be a Kleinian group with covolume $V$ and let $\Gamma^{(2)}$ be the subgroup of $\Gamma$ generated by squares. Then there exists an absolute, effectively computable constant $C$ such that the covolume of $\Gamma^{(2)}$ is at most $e^{CV}$.
\end{prop}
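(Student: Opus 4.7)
The plan is to observe that $\Gamma^{(2)}$ is a normal subgroup of $\Gamma$, since it is characteristic, and that the quotient $\Gamma/\Gamma^{(2)}$ is an elementary abelian $2$--group. Indeed, every element of $\Gamma/\Gamma^{(2)}$ has order dividing $2$, and the quotient is generated by the images of any generating set for $\Gamma$. Consequently, if $d(\Gamma)$ denotes the minimal number of generators of $\Gamma$, then
\[
[\Gamma:\Gamma^{(2)}] \;\leq\; 2^{d(\Gamma)}.
\]

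First I would recall that the covolume of $\Gamma^{(2)}$ equals $[\Gamma:\Gamma^{(2)}]\cdot V$, so the task reduces to producing an absolute, effectively computable constant $C_0$ with $d(\Gamma)\leq C_0 V$. When $\Gamma$ is torsion-free, $\mathbf{H}^3/\Gamma$ is a hyperbolic $3$--manifold and $d(\Gamma)$ is exactly the rank of $\pi_1(M)$, so Theorem \ref{RankVolumeInequality} (Gelander \cite{Gelander}) gives the desired bound immediately. In general, $\Gamma$ may have torsion, and one must appeal instead to the orbifold versions of this inequality, which follow from the more general arguments of Gelander \cite{Gelander1} and Belolipetsky--Gelander--Lubotzky--Shalev \cite{BGLS}: these works show that any lattice $\Gamma$ in $\PSL(2,\mathbf{C})$ of covolume $V$ admits a generating set of cardinality $O(V)$, with the implicit constant absolute and effectively computable.

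Putting the pieces together, $[\Gamma:\Gamma^{(2)}]\leq 2^{C_0 V}$, so the covolume of $\Gamma^{(2)}$ is at most
\[
V\cdot 2^{C_0 V} \;\leq\; e^{CV}
\]
for a suitably chosen absolute, effectively computable constant $C$ (absorbing the polynomial factor $V$ into the exponential, which is permissible once $V$ exceeds some small absolute threshold, and then adjusting $C$ for the bounded remaining range).

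The main obstacle is purely bibliographic rather than conceptual: the rank-versus-volume inequality must be invoked in a form valid for hyperbolic $3$--\emph{orbifolds} rather than only manifolds, since no assumption of torsion-freeness is made on $\Gamma$. This is addressed by using the refinements in \cite{Gelander1} and \cite{BGLS} instead of the manifold-only statement of Theorem \ref{RankVolumeInequality}. Once the rank bound is in hand, the algebraic step---passing from $\Gamma$ to $\Gamma^{(2)}$ and noting that the quotient is $2$--elementary abelian---is immediate.
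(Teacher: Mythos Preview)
Your proof is correct and follows essentially the same approach as the paper: bound $d(\Gamma)$ linearly in $V$ via the rank-versus-volume inequality, then use that $\Gamma/\Gamma^{(2)}$ is elementary abelian of order at most $2^{d(\Gamma)}$. You are in fact slightly more careful than the paper, which invokes Theorem~\ref{RankVolumeInequality} directly without commenting on the manifold/orbifold distinction; your appeal to \cite{Gelander1} and \cite{BGLS} for the general lattice case is well placed.
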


\begin{proof}
It is well-known that as $\Gamma$ has finite covolume it is finitely generated. Let $d(\Gamma)$ denote the minimal number of generators of $\Gamma$. By Theorem \ref{RankVolumeInequality}, there exists an absolute constant $C$ such that $d(\Gamma)<C_0V$. Now $\Gamma/\Gamma^{(2)}$ is a finite elementary abelian $2$--group of order at most $2^{d(\Gamma)}$. As the covolume of $\Gamma^{(2)}$ is $[\Gamma:\Gamma^{(2)}]\cdot V$, the result follows.
\end{proof}

\begin{lem}\label{lem:gammaindexbound}
Let $\Gamma^{\prime}$ be an arithmetic Fuchsian or Kleinian group derived from a quaternion algebra $B/k$ which has covolume $V^\prime$ and is contained in $\Gamma_\calO$, where $\calO$ is a maximal order of $B$. Then $[\Gamma_\calO:\Gamma]\leq V^\prime$.
\end{lem}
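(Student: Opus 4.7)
The proof plan rests on the basic identity from covering-space theory: since $\Gamma$ has finite index in $\Gamma_\calO$, the orbifold $\mathbf{H}^n/\Gamma$ (with $n=2$ or $3$ according as we are in the Fuchsian or Kleinian case) is a $[\Gamma_\calO:\Gamma]$--sheeted orbifold cover of $\mathbf{H}^n/\Gamma_\calO$. This immediately yields the multiplicative relation
\[ V^\prime \;=\; [\Gamma_\calO:\Gamma]\cdot\covol(\Gamma_\calO), \]
so the asserted inequality $[\Gamma_\calO:\Gamma]\leq V^\prime$ is equivalent to the uniform lower bound $\covol(\Gamma_\calO)\geq 1$ holding across all maximal orders $\calO$ in arithmetic quaternion algebras of the admissible signature.

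To establish this lower bound, I would appeal to Borel's explicit volume formula (a version of which is recalled in the proof of Lemma~\ref{lem:Bdiscbound}), which expresses $\covol(\Gamma_\calO)$ as an essentially arithmetic quantity proportional to $d_k^{3/2}\zeta_k(2)\prod_{\pp\in\Ram_f(B)}(\abs{\pp}-1)$, divided by an archimedean factor depending only on $n_k$. Combining the trivial bound $\zeta_k(2)\geq 1$, the fact that each factor $\abs{\pp}-1\geq 1$, and Odlyzko-style lower bounds on $d_k$ as $n_k$ grows, the covolume expression exceeds $1$ outside a finite list of $(k,B)$ pairs of small degree and small discriminant. These exceptional pairs are then handled by direct evaluation of Borel's formula, or by appealing to the known classification of small-covolume arithmetic lattices derived from maximal orders (Chinburg--Friedman, Maclachlan--Reid).

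The main obstacle is this finite exceptional case analysis: the asymptotic bound via Odlyzko is routine, but the Fuchsian low-degree cases are delicate (for instance the $(2,3,7)$--triangle regime, where the invariant trace field $\mathbf{Q}(2\cos(\pi/7))$ is cubic and $\zeta_k(2)$ is close to $1$), so the ramification product $\prod_\pp(\abs{\pp}-1)$ plays the decisive role in pushing the bound above $1$. Once $\covol(\Gamma_\calO)\geq 1$ is established, rearranging the covering-space identity immediately gives $[\Gamma_\calO:\Gamma]=V^\prime/\covol(\Gamma_\calO)\leq V^\prime$, completing the proof.
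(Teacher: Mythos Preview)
Your approach is essentially the paper's: both start from the identity $V'=[\Gamma_\calO:\Gamma']\cdot\covol(\Gamma_\calO)$ and reduce the lemma to the lower bound $\covol(\Gamma_\calO)\geq 1$, established via Borel's explicit volume formula together with discriminant lower bounds of Odlyzko--Poitou type. The only substantive difference is that the paper, treating the Kleinian case, invokes the inequality $\log d_k\geq 4r_1+6r_2$ as a uniform bound and concludes directly, with no finite exceptional list; the Fuchsian case is declared ``virtually identical'' and left to the reader. Your plan to handle small-degree fields by direct evaluation is more cautious than the paper's argument, and your singling out of the $(2,3,7)$ regime is apt: the paper does not spell out how the uniform discriminant bound covers such low-degree Fuchsian cases, so your extra care there is not wasted effort.
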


\begin{proof}
We prove the lemma in the case that $\Gamma^\prime$ is an arithmetic Kleinian group. The proof in the Fuchsian case is virtually identical. By Borel \cite{borel-commensurability} (see also \cite[Ch 11]{MR}), the covolume $V_{\calO}$ of $\Gamma_\calO$ is equal to
\begin{equation}
\frac{d_k^{3/2}\zeta_k(2)\prod_{\pp\mid \disc(B)} (\abs{\pp}-1)}{(4\pi^2)^{n_k-1}}.
\end{equation}
As $V_{\calO}\cdot [\Gamma_\calO:\Gamma^\prime] = V^\prime $ we see that
\begin{equation}\label{eq:1}
[\Gamma_\calO:\Gamma] = \frac{V^\prime(4\pi^2)^{n_k-1}}{d_k^{3/2}\zeta_k(2)\prod_{\pp\mid \disc(B)} (\abs{\pp}-1)}  \leq \frac{V^\prime(4\pi^2)^{n_k-1}}{d_k^{3/2}}.
\end{equation}
The discriminant bounds of Odlyzko \cite{Odlyzko-bounds} and Poitou \cite{Poitou} (see also \cite[Thm 2.4]{doud}) show that $\log(d_k)\geq 4r_1+6r_2$ where $r_1$ is the number of real places of $k$ and $r_2$ is the number of complex places of $k$. It is well-known that in our situation it must be the case that $k$ has a unique complex place, hence $r_1=n_k-2$ and $r_2=1$. We conclude that $d_k^{3/2}\geq e^{6(n_k-2)}e^9$. Applying this bound to equation (\ref{eq:1}) and simplifying finishes the proof.
\end{proof}

\begin{rmk}
It is implicit in the statement of Lemma \ref{lem:gammaindexbound} and in any event follows from the ideas of the lemma's proof that if $\calO$ is a maximal order of $B$ then $\Gamma_\calO$ has covolume at least $1$.
\end{rmk}	

We next need a simple lemma that provides a bound for the regulator of a maximal subfield.

\begin{lem}\label{lem:regbound}
Let $k$ be a number field with a unique complex place, $B/k$ a quaternion algebra which is ramified at all real places of $k$ and $L$ a maximal subfield of $B$. Then $\Reg_L\leq d_L^{n_k}$.
\end{lem}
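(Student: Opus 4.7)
The plan is to use the analytic class number formula together with a standard explicit upper bound on the residue of the Dedekind zeta function of $L$.

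First I would pin down the archimedean signature of $L$. Because $L$ is a maximal subfield of $B$, it embeds into $B$, so for every place of $k$ ramified in $B$ that place must be non-split in $L/k$. As $B$ is ramified at every real place of $k$, each of the $n_k - 2$ real places of $k$ extends to a (single) complex place of $L$, while the unique complex place of $k$ splits into two complex places of $L$. Hence $r_1(L) = 0$, $r_2(L) = n_k$, $n_L = [L:\Q] = 2n_k$, and the unit rank of $L$ is $n_k - 1$.

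Next I would invoke the analytic class number formula in the form
\[ \kappa_L = \frac{(2\pi)^{n_k} h_L \Reg_L}{w_L \sqrt{d_L}}, \]
and use the trivial bound $h_L \ge 1$ to obtain
\[ \Reg_L \le \frac{w_L \sqrt{d_L}\, \kappa_L}{(2\pi)^{n_k}}. \]
I would then apply an explicit upper bound for the residue, for example Louboutin's estimate $\kappa_L \le \bigl(\tfrac{e\log d_L}{2(n_L - 1)}\bigr)^{n_L - 1}$, together with the elementary bound $w_L \le 2 n_L = 4 n_k$ on the number of roots of unity in $L$. This gives
\[ \Reg_L \le \frac{4 n_k}{(2\pi)^{n_k}}\, \sqrt{d_L}\, \left( \frac{e\log d_L}{4 n_k - 2}\right)^{2 n_k - 1}. \]

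Finally, since $n_k \ge 2$, the right-hand side is $O\bigl(\sqrt{d_L} (\log d_L)^{2n_k - 1}\bigr)$ and is therefore dominated by $d_L^{n_k} \ge d_L^{2}$ once $d_L$ exceeds a mild threshold. For the remaining small-discriminant cases, I would invoke Minkowski's lower bound on $d_L$ in terms of $n_L$ (which grows exponentially in $n_L$ and hence in $n_k$) to close the gap. The main obstacle is keeping the constants clean enough for the inequality to hold uniformly across all admissible pairs $(k, L)$ with no exceptional cases; this is essentially a bookkeeping exercise once the correct signature of $L$ and the class number formula are in hand.
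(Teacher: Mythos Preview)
Your approach is the same as the paper's: compute the signature of $L$, apply the analytic class number formula with $h_L\ge 1$, bound $\kappa_L$ via Louboutin, and then absorb everything into $d_L^{n_k}$. Two points, though.

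First, the bound $w_L\le 2n_L$ is not valid in general (e.g.\ $L=\Q(\zeta_{30})$ has $n_L=8$, $w_L=30$); the correct elementary estimate is $w_L\le 2n_L^2=8n_k^2$, coming from $\varphi(w_L)\mid n_L$ and $\varphi(m)\ge\sqrt{m/2}$.

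Second, the paper avoids your Minkowski-based case analysis entirely. Using $8n_k^2\le(2\pi)^{n_k}$ for $n_k\ge 2$ kills the prefactor outright, giving $\Reg_L\le d_L^{1/2}\kappa_L$. Louboutin in the form $\kappa_L\le(\log d_L^{1/2})^{n_L-1}$ then yields
\[
\Reg_L\le d_L^{1/2}\bigl(\log d_L^{1/2}\bigr)^{2n_k-1}\le d_L^{1/2}\bigl(d_L^{1/2}\bigr)^{2n_k-1}=d_L^{n_k},
\]
using only $\log t\le t$. No threshold on $d_L$, no bookkeeping.
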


\begin{proof}
Let $r_1(L)$ (resp., $r_2(L)$) be the number of real (resp., complex) places of $L$. As $L$ embeds into $B$, the Albert--Brauer--Hasse--Noether theorem implies that $r_1(L)=0$, $r_2(L)=n_k$. The class number formula \cite[p.~300]{Lang-ANT} yields $\Reg_L=\frac{\omega_Ld_L^{\frac{1}{2}}\kappa_L}{(2\pi)^{n_k}h_L}$, where $\omega_L$ is the number of roots of unity lying in $L$, $\kappa_L$ is the residue at $s=1$ of the Dedekind zeta function $\zeta_L(s)$ and $h_L$ is the class number of $L$. As $h_L\geq 1$, $\omega_L\leq 2n_L^2=8n_k^2$ and $8n^2\leq (2\pi)^n$ for all $n\geq 2$, we see that
\begin{align*}
\Reg_L &\leq d_L^{\frac{1}{2}}\kappa_L \leq d_L^{\frac{1}{2}}\log(d_L^{\frac{1}{2}})^{n_L-1} \leq d_L^{\frac{1}{2}}d_L^{n_k-\frac{1}{2}} = d_L^{n_k},
\end{align*}
where the second inequality follows from \cite[Prop 2]{Louboutin}.
\end{proof}

We will also need the following analog of Lemma \ref{lem:regbound}, whose proof is virtually identical to that of Lemma \ref{lem:regbound}.

\begin{lem}\label{lem:regbound2}
Let $k$ be a totally real number field, $B/k$ a quaternion algebra which is ramified at all but one real places of $k$ and $L$ a maximal subfield of $B$ which is not totally complex. Then $\Reg_L\leq d_L^{n_k}$.
\end{lem}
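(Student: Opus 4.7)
The plan is to mirror the proof of Lemma \ref{lem:regbound} step for step, adjusting the signature of $L$ and exploiting the hypothesis that $L$ is \emph{not} totally complex to reduce $\omega_L$ to a trivial constant.

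First I would pin down the signature of $L$ using the Albert--Brauer--Hasse--Noether theorem. Because $L/k$ is a quadratic extension which embeds in $B$, at each archimedean place of $k$ where $B$ ramifies, $L$ cannot split; since those ramified places are exactly the $n_k-1$ real places of $k$ where $B$ ramifies, each contributes a single complex place of $L$. The one remaining real place of $k$ (where $B$ is split) could a priori contribute either two real places or one complex place. Since we are told $L$ is not totally complex, it must contribute a real place, forcing it to split. Therefore $r_1(L)=2$, $r_2(L)=n_k-1$, and $n_L = 2n_k$.

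The key simplification (and the only point where the argument genuinely differs from that of Lemma \ref{lem:regbound}, where $L$ is forced to be totally complex) is that a real embedding of $L$ forces $\mu(L) \subset \{\pm 1\}$, hence $\omega_L = 2$. I would then plug this into the class number formula \cite[pp.~300]{Lang-ANT}:
\[ \Reg_L \;=\; \frac{\omega_L\, d_L^{1/2}\, \kappa_L}{2^{r_1(L)}(2\pi)^{r_2(L)} h_L} \;=\; \frac{d_L^{1/2}\,\kappa_L}{2(2\pi)^{n_k-1}\, h_L}. \]
Using $h_L \geq 1$ and Louboutin's bound \cite[Proposition 2]{Louboutin}, $\kappa_L \leq \log(d_L^{1/2})^{n_L-1} = \log(d_L^{1/2})^{2n_k-1}$, together with the elementary inequality $\log(d_L^{1/2}) \leq d_L^{1/2}$ (valid for $d_L \geq 1$), one obtains
\[ \Reg_L \;\leq\; \frac{d_L^{1/2}\cdot d_L^{n_k - 1/2}}{2(2\pi)^{n_k-1}} \;=\; \frac{d_L^{n_k}}{2(2\pi)^{n_k-1}} \;\leq\; d_L^{n_k}, \]
which is the desired bound.

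There is really no serious obstacle here: unlike in Lemma \ref{lem:regbound}, one does not even need the crude estimate $\omega_L \leq 2n_L^2$ or the supporting numerical inequality $8n^2 \leq (2\pi)^n$. The hypothesis that $L$ is not totally complex collapses $\omega_L$ to $2$, and the denominator $2(2\pi)^{n_k-1}$ is already at least $2$ for every $n_k \geq 1$, so the bound is uniform in $n_k$ without needing to treat small cases separately.
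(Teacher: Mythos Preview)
Your proof is correct and follows essentially the same approach the paper indicates (it omits the argument, remarking only that it is ``virtually identical'' to the proof of Lemma~\ref{lem:regbound}). Your observation that $\omega_L=2$ because $L$ has a real embedding is a clean simplification over the crude bound $\omega_L\le 2n_L^2$ used in Lemma~\ref{lem:regbound}; in fact it is needed here, since blindly transporting that crude bound would require $2n_k^2\le(2\pi)^{n_k-1}$, which fails for $n_k\le 2$.
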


We briefly survey some basic results about logarithmic heights of algebraic numbers. For a number field $k$ and $\pp\in \Pp_k$, we normalize the associated valuation $\abs{\cdot}_\pp$ in the usual way so that for each $\alpha$ in $k$, we have
\[ \prod_{\pp\mid\infty} \abs{\alpha}_\pp=\abs{\Norm_{k/\Q}(\alpha)} \]
and $\prod_{\pp\in \Pp_k}\abs{\alpha}_\pp=1$. We define the \textbf{logarithmic height of $\alpha$ relative to $k$} to be
\[ h_k(\alpha)=\sum_{\pp\in \Pp_k} \log\pr{\max\set{1,\abs{\alpha}_\pp}}. \]
The \textbf{absolute height of $\alpha$} is $H(\alpha)=[k:\Q]^{-1}h_k(\alpha)$ and is independent of the field $k$. We remark that the \textbf{height of $\alpha$ relative to $\Q(\alpha)$} is the logarithm of the Mahler measure of the minimal polynomial of $\alpha$. We also remark the the height of $\alpha$ can be computed using only infinite places. The proof of the following lemma is straightforward.

\begin{lem}\label{lem:heights}
Let the notation be as above.
\begin{enumerate}
\item For all nonzero $n\in\Z$, $H(\alpha^n)=\abs{n}\cdot H(\alpha)$.
\item For all algebraic numbers $\beta$, $H(\alpha\beta)\leq H(\alpha)+H(\beta)$.
\item If $\alpha$ and $\beta$ are Galois conjugates then $H(\alpha)=H(\beta)$.
\end{enumerate}
\end{lem}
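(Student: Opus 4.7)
The plan is to derive all three parts directly from the definition $H(\alpha) = [k:\Q]^{-1} h_k(\alpha)$ together with two elementary facts: multiplicativity of each normalized absolute value ($\abs{\alpha\beta}_\pp = \abs{\alpha}_\pp\abs{\beta}_\pp$) and the product formula $\prod_{\pp \in \Pp_k} \abs{\alpha}_\pp = 1$ for nonzero $\alpha \in k^\ast$. The single recurring identity is
\[ \log\max\set{1,x} - \log\max\set{1,x^{-1}} = \log x \qquad (x > 0), \]
which, combined with the product formula, forces the ``positive'' and ``negative'' parts of $\sum_\pp \log\abs{\alpha}_\pp$ to agree. I will tacitly use the standard fact that $H$ does not depend on the choice of field $k$ containing $\alpha$ (which is exactly what the paper states in defining $H$), so in each step I am free to enlarge $k$ to contain all relevant algebraic numbers.

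For part (i), multiplicativity gives $\abs{\alpha^n}_\pp = \abs{\alpha}_\pp^n$ for $n \geq 1$, and hence $\max\set{1,\abs{\alpha^n}_\pp} = \max\set{1,\abs{\alpha}_\pp}^n$. Summing the logarithms over $\pp$ and dividing by $[k:\Q]$ gives $H(\alpha^n) = n\, H(\alpha)$ for positive $n$. The case of negative $n$ reduces to showing $H(\alpha^{-1}) = H(\alpha)$: applying the displayed identity at each place and summing yields
\[ h_k(\alpha) - h_k(\alpha^{-1}) = \sum_{\pp \in \Pp_k} \log\abs{\alpha}_\pp = 0 \]
by the product formula, so $H(\alpha^{-1}) = H(\alpha)$, and combining with the positive case handles all nonzero $n \in \Z$.

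For part (ii), enlarge $k$ so that it contains both $\alpha$ and $\beta$. The elementary inequality $\max\set{1,xy} \leq \max\set{1,x}\max\set{1,y}$ for $x,y \geq 0$, applied together with multiplicativity $\abs{\alpha\beta}_\pp = \abs{\alpha}_\pp\abs{\beta}_\pp$ at each place $\pp$, gives
\[ \log\max\set{1,\abs{\alpha\beta}_\pp} \leq \log\max\set{1,\abs{\alpha}_\pp} + \log\max\set{1,\abs{\beta}_\pp}. \]
Summing over $\pp \in \Pp_k$ and dividing by $[k:\Q]$ delivers $H(\alpha\beta) \leq H(\alpha) + H(\beta)$.

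For part (iii), enlarge $k$ to a finite Galois extension $K/\Q$ containing both $\alpha$ and $\beta$, and pick $\sigma \in \Gal(K/\Q)$ with $\sigma(\alpha) = \beta$. The Galois group acts on the set $\Pp_K$ of places, and with the standard normalizations the action satisfies $\abs{\sigma(\gamma)}_\pp = \abs{\gamma}_{\sigma^{-1}\pp}$ for every $\gamma \in K$ and $\pp \in \Pp_K$. Consequently the sum defining $h_K(\beta) = h_K(\sigma(\alpha))$ is merely a permutation of the sum defining $h_K(\alpha)$, so $h_K(\alpha) = h_K(\beta)$ and hence $H(\alpha) = H(\beta)$. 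There is no real obstacle here; the only thing requiring care is checking that the normalizations chosen in the paper are compatible with the Galois action on places in (iii) and with the product formula used in (i), both of which are routine.
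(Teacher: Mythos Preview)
Your proof is correct and complete. The paper does not actually give a proof of this lemma; it simply states that ``the proof of the following lemma is straightforward'' and moves on, so your argument supplies exactly the routine verification the paper omits.
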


\begin{prop}\label{proposition:derivedcovolume}
Let $\Gamma$ be an arithmetic Fuchsian or Kleinian group which has covolume $V$ and with trace field $k$ and quaternion algebra $B$. Let $L/k$ be a quadratic extension which embeds into $B$ and suppose further that $L$ is not totally complex if $k$ is totally real. Then there exist absolute, effectively computable constants $C_1,C_2$ such that $L=k_\gamma$ for some hyperbolic $\gamma\in\Gamma$ with length at most $e^{C_1V}d_L^{C_2+\log(V)}$.
\end{prop}

\begin{proof}
We prove the proposition in the case that $k$ has a unique complex place. The case in which $k$ is totally real has an identical proof. As every real place of $k$ is ramified in $B$, we see that $L$ embeds into $B$ implies, by the Albert--Brauer--Hasse--Noether theorem, that $L$ is totally complex. By Dirichlet's unit theorem, the $\Z$--rank of $\calO_L^*$ is strictly greater than that of $\calO_k^*$. From this we conclude that every system of fundamental units of $\calO_L^*$ contains a fundamental unit $u_0\in\calO_L^*$ such that $u_0^n\notin k$ for any $n\geq 1$. Hence, we have $L=k(u^n)$ for all $n\neq 0$. Let $\sigma$ denote the non-trivial automorphism of $\Gal(L/k)$ and define $u=u_0/\sigma(u_0)$. It is clear that $\Norm_{L/k}(u)=1$ and that $u^n\notin\calO_k^*$ for any $n\geq 1$. By \cite{hajdu} (see also \cite{brindza}) we may take $u_0$ to have logarithmic height (relative to $L$) $h_L(u_0)\leq n_k^{11n_k}\Reg_L$. It follows from Lemma \ref{lem:heights} that $h_L(u)\leq 2n_k^{11n_k}\Reg_L$, and since $[L:\Q]=2n_k$ we see as well that $H(u)\leq n_k^{11n_k-1}\Reg_L$.

As $\Gamma^{(2)}$ is derived from the quaternion algebra $B$ \cite[Ch 3]{MR}, there exists a maximal order $\calO$ of $B$ such that $\Gamma^{(2)}\subset \Gamma_{\calO}^1$. Recall that $k_B$ is the maximal abelian extension of $k$ of exponent $2$ in which all prime divisors of $\disc(B)$ split completely. We have two cases. Suppose first that $L\not\subset k_B$. Then every maximal order of $B$ admits an embedding of $\calO_k[u]$ (\cite[Thm 3.3]{Chinburg-Friedman-selectivity}; see also \cite[Prop 5.4]{linowitz-selectivity}), hence we may assume that $u\in\calO$. Let $\gamma^\prime$ be the image in $\Gamma_{\calO}^1$ of $u$. Proposition \ref{prop:derivedindex} and Lemma \ref{lem:gammaindexbound} show that $\gamma={\gamma^\prime}^n\in\Gamma^{(2)}\subset \Gamma$ for some $n\leq e^{C_0V}$ and constant $C_0$. By Lemma \ref{lem:heights} we have $H(\gamma)\leq e^{C_0V}n_k^{11n_k-1}\Reg_L$. As the logarithm of the Mahler measure of the minimal polynomial of $\gamma$ is less than $2n_kH(\gamma)$, by \cite[Lemma 12.3.3]{MR} we have $\ell_0(\gamma)\leq 4e^{C_0V}n_k^{11n_k}\Reg_L$. By Lemma \ref{lem:regbound}, we have $\ell_0(\gamma)\leq 4e^{C_0V}n_k^{11n_k}d_L^{n_k}$. By \cite[Lemma 4.3]{chinburg-smallestorbifold}, we have $n_k\leq 23+\log(V)$. Hence there exists a constant $C_1$ such that $\ell_0(\gamma)\leq e^{C_1V}d_L^{23+\log(V)}$.

Suppose now that $L\subset k_B$ and $\calE$ is a maximal order of $B$ containing $u$. By Proposition \ref{proposition:selectivecase} there exists an absolute constant $C_2>0$ and an integer $n\leq d_L^{C_2}$ such that $u^n\in\calO^1$. The arguments of the previous paragraph show that there exists $\gamma\in\Gamma^{(2)}\subset \Gamma$ with length at most $e^{C_1V}d_L^{C_2+\log(V)}$, finishing our proof.
\end{proof}

\begin{rmk}
Proposition \ref{proposition:derivedcovolume} is an effective version of Theorem 12.2.6 of \cite{MR}.	
\end{rmk}

\subsection{Proof of Corollary \ref{cor:GeoCount}}


\begin{thm}\label{thm:area2app1}
Let $k$ be a number field of degree $n_k$, discriminant $d_k$ that is totally real (resp., has a unique complex place). Let $B/k$ be a quaternion algebra which is ramified at all but one real places of $k$ (resp., at all real places of $k$) and let $\calO$ be a maximal order of $B$. Then for all sufficiently large $x$, the orbifold $\textbf{H}^2/\Gamma_\calO$ (resp., $\textbf{H}^3/\Gamma_\calO$) contains at least $\left[\frac{\kappa_k}{2}\left(\frac{3}{\pi^2}\right)^{n_k}\right]x$ rationally inequivalent geodesics of length at most $\left[2n_k^{11n_k-1}d_k^{2n_k}\right]x^{n_k}$.
\end{thm}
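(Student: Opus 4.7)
The plan is to combine the analytic count of Theorem \ref{thm:countingquads} with a streamlined version of the construction behind Proposition \ref{proposition:derivedcovolume}, exploiting the fact that here $\Gamma = \Gamma_{\calO}$ rather than a general derived subgroup. The three ingredients are (a) a count of quadratic fields $L/k$ embedding into $B$ with $|\Delta_{L/k}|\le x$; (b) for each such $L$, an explicit unit of $\calO_L^1$ landing (after at most a short selective twist) in $\calO$; and (c) a translation from the arithmetic complexity of that unit to the geodesic length via heights.

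For (a), I would invoke Theorem \ref{thm:countingquads} directly, which produces at least $c_{k,B}\,x + o(x)$ such quadratic extensions with $c_{k,B}\ge \kappa_k/(2^{r_B+r_2}\zeta_k(2))$. Under the hypotheses on $B$, the infinite ramification contributes $r_{B,\infty}=n_k-1$ or $n_k-2$, and the parity condition for $B$ to be a division algebra forces $r_B + r_2 \le n_k+1$; combining this with the crude Euler-product bound $\zeta_k(2)\le \zeta(2)^{n_k} = (\pi^2/6)^{n_k}$ yields the lower bound $c_{k,B}\ge \tfrac{\kappa_k}{2}(3/\pi^2)^{n_k}$, which becomes the leading constant in the theorem. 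For (b) and (c), I would follow the proof of Proposition \ref{proposition:derivedcovolume} but with a crucial simplification: one chooses $u_0\in\calO_L^\times$ a fundamental unit whose nontrivial powers are outside $k$ (automatic in the relevant signature), sets $u = u_0/\sigma(u_0)$ so that $\Norm_{L/k}(u)=1$, and then either embeds $u$ directly into $\calO$ via selectivity when $L\not\subset k_B$, or replaces it by $u^m$ with $m\le d_L^{C_1}$ via Proposition \ref{proposition:selectivecase} when $L\subset k_B$. Since $\Gamma_{\calO}=P\rho(\calO^1)$, the resulting element gives $\gamma\in\Gamma_{\calO}$ directly, and no $e^{C_0V}$ correction from Proposition \ref{prop:derivedindex} is needed. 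The height bound $H(u)\le n_k^{11n_k-1}\Reg_L$ from \cite{brindza,hajdu}, Lemmas \ref{lem:regbound}--\ref{lem:regbound2} giving $\Reg_L\le d_L^{n_k}$, the relative discriminant identity $d_L = |\Delta_{L/k}|\cdot d_k^2\le x\,d_k^2$, and \cite[Lemma 12.3.3]{MR} combine to give $\ell(\gamma)\le 2n_k^{11n_k-1}d_k^{2n_k}x^{n_k}$, as required.

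The main obstacle is upgrading ``distinct fields'' to ``rationally inequivalent geodesics''. Distinct $L$'s plainly yield hyperbolic elements with distinct associated fields $k_\gamma$ and hence distinct closed geodesics, but two geodesics $\gamma_1,\gamma_2$ with different associated fields could in principle satisfy $\ell(\gamma_1)/\ell(\gamma_2)\in\Q$, equivalently $|\lambda_1|^q = |\lambda_2|^p$ for the dominant eigenvalues. I would handle this by a height/multiplicity argument: for $|\lambda_1|^q = |\lambda_2|^p$ the quotient $\lambda_1^q/\lambda_2^p$ is an algebraic unit of absolute value $1$ at the distinguished archimedean place, and since both eigenvalues have height $O(\Reg_{L_i})$, the number of distinct $L_2$ rationally equivalent to a fixed $L_1$ among our family is bounded by an absolute constant depending only on $n_k$. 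Absorbing this bounded multiplicity into the constants preserves the linear-in-$x$ rate and gives the stated lower bound.
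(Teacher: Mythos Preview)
Your overall architecture is right, and your streamlining of Proposition~\ref{proposition:derivedcovolume} for the case $\Gamma=\Gamma_\calO$ is exactly what the paper does. However, there is a genuine gap in your treatment of rational inequivalence, and two smaller points where your argument diverges from the paper's in ways that cost you the stated bounds.

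\textbf{The main gap: rational inequivalence.} Your proposed height/multiplicity argument does not work. The relation $|\lambda_1|^q=|\lambda_2|^p$ places no bound on $p,q$, so $\lambda_1^q/\lambda_2^p$ can have arbitrarily large height; nothing in your sketch produces a uniform bound on the number of $L_2$ rationally equivalent to a fixed $L_1$. The paper instead invokes a structural fact, \cite[Lemma~6.3]{chinburg-geodesics}: if $\ell(\gamma_1)/\ell(\gamma_2)\in\Q$ for hyperbolic $\gamma_1,\gamma_2\in\Gamma_\calO$, then $k(\lambda_1)=k(\lambda_2)$ or $k(\lambda_1)=k(\overline{\lambda_2})$. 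Thus each rational equivalence class meets at most two of the fields $L$, and this is where the factor of $\tfrac12$ in the leading constant comes from. This lemma is the missing ingredient.

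\textbf{Two smaller corrections.} First, in the selective case $L\subset k_B$ you invoke Proposition~\ref{proposition:selectivecase}, which introduces an extra $d_L^{C_1}$ factor into $\ell(\gamma)$ and destroys the stated length bound. The paper sidesteps this entirely: since $k_B/k$ is a fixed finite extension, only finitely many $L$ satisfy $L\subset k_B$, and these are simply discarded (``with at most finitely many exceptions''). Second, your claim that ``the parity condition for $B$ to be a division algebra forces $r_B+r_2\le n_k+1$'' is false: parity only forces $r_B$ even, and $B$ may ramify at arbitrarily many finite primes. The paper does not bound $r_B$ this way. Also, in the totally real case one must restrict to $L$ not totally complex (so that $\calO_L^\times$ has rank strictly larger than $\calO_k^\times$); the paper handles this via Proposition~\ref{prop:wood}(iii), which costs another factor of $\tfrac12$.
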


\begin{proof}
By Theorem \ref{thm:countingquads} and the well-known fact that $\zeta_k(s)\leq \zeta(s)^{n_k}$, for all sufficiently large real $x>0$ there are at least $\left[2\kappa_k\left(\frac{3}{\pi^2}\right)^{n_k}\right]x$ quadratic extensions $L/k$ which embed into $B$ and satisfy $\abs{\Delta_{L/k}}<x$. When $k$ is totally real, if an extension $L/k$ embeds into $B$ then $L$ is either totally complex or else has $2$ real places and $n_k-2$ complex places. Combining Theorem \ref{thm:countingquads} with Proposition \ref{prop:wood}(iii) now shows that when $k$ is totally real there are at least $\left[\kappa_k\left(\frac{3}{\pi^2}\right)^{n_k}\right]x$ quadratic extensions $L/k$ which embed into $B$, satisfy $\abs{\Delta_{L/k}}<x$, and are not totally complex. The proof of Proposition \ref{proposition:derivedcovolume} shows that with at most finitely many exceptions, the extensions described above are all of the form $L=k_\gamma$, where $\gamma \in \Gamma_\calO$ is hyperbolic with length at most $2n_k^{11n_k-1}\Reg_L.$ By \cite[Lemma 6.3]{chinburg-geodesics}, if $\lambda_1,\lambda_2$ are eigenvalues of hyperbolics $\gamma_1,\gamma_2\in\Gamma_\calO$ whose complex lengths are rationally equivalent then either $k(\lambda_1)=k(\lambda_2)$ or $k(\lambda_1)=k(\overline{\lambda_2})$. It follows that for $x$ sufficiently large, at least $\left[\frac{\kappa_k}{2}\left(\frac{3}{\pi^2}\right)^{n_k}\right]x$ of the geodesics associated to the hyperbolic $\gamma$ are rationally inequivalent. The theorem now follows from Lemma \ref{lem:regbound} and $d_L=\abs{\Delta_{L/k}}d_k^2$.
\end{proof}


\begin{proof}[Proof of Corollary \ref{cor:GeoCount}]
By Proposition \ref{prop:derivedindex} (which follows from Gauss--Bonnet when $\Gamma$ is a Fuchsian group) the covolume of $\Gamma^{(2)}$ is at most $e^{CV}$ for some absolute, effectively computable constant $C$. It is well-known \cite[Cor 8.3.5]{MR} that there is a maximal order $\calO$ of $B$ such that $\Gamma^{(2)}\leq \Gamma_\calO$, and it was shown in Lemma \ref{lem:gammaindexbound} that the index of $\Gamma^{(2)}$ in $\Gamma_\calO$ is at most $e^{CV}$. It follows immediately from Theorem \ref{thm:area2app1} that for $x$ sufficiently large, the orbifold $\textbf{H}^3/\Gamma$ contains at least $\left[\frac{\kappa_k}{2}\left(\frac{3}{\pi^2}\right)^{n_k}\right]x$ rationally inequivalent geodesics of length at most $e^{CV}(2n_k^{11n_k-1}d_k^{2n_k})x^{n_k}.$
The corollary now follows from $n_k\leq 23+\log(V)$ \cite[Lemma 4.3]{chinburg-smallestorbifold}, $d_k\leq V^{22}$ \cite[proof of Thm 4.1]{linowitz-isospectralsize}, and logarithm inequalities.
\end{proof}

\subsection{Counting manifolds with prescribed geodesic lengths}

Let $M$ be an arithmetic hyperbolic $2$--orbifold (resp., $3$--orbifold) with $\pi_1(M) = \Gamma$ that has geodesics with lengths (resp., complex length) $\ell_1,\dots,\ell_N$. For $V>0$, $N^2_{\ell_1,\dots,\ell_N}(V)$ (resp., $N^3_{\ell_1,\dots,\ell_N}(V)$) is the maximum cardinality of a family of arithmetic, pairwise non-commensurable, hyperbolic $2$--orbifolds (resp., $3$--orbifolds) all of which have geodesics with lengths (resp., complex lengths) $\ell_1,\dots,\ell_N$ and volume at most $V$. By Borel \cite[Thm 8.2]{borel-commensurability}, both $N^2_{\ell_1,\dots,\ell_N}(V),N^3_{\ell_1,\dots,\ell_N}(V)$ finite. As an application of Theorem \ref{thm:quatembedding} we provide lower and upper bounds for these functions. \cite{LMP} establish estimates for similarly defined counting functions.

Before proceeding, we fix some notation which we will use for the remainder of this section. With $\ell_1,\dots,\ell_N$ as above, let $\gamma_i \in \Gamma$ be hyperbolic with associated geodesic of length $\ell_i$ and let $\lambda_i$ denote the eigenvalue of a pre-image of $\gamma_i$ in $\SL(2,\R),\SL(2,\C)$ satisfying $\abs{\lambda_i}>1$.

We now state our bounds for $N^2_{\ell_1,\dots,\ell_N}(V)$ and $N^3_{\ell_1,\dots,\ell_N}(V)$. Theorem \ref{thm:area3app1} deals with the case in which $\{\lambda_1,\dots,\lambda_N\}\not\subset \R$ and Theorem \ref{thm:area3app2} deals with the case in which $\{\lambda_1,\dots,\lambda_N\}\subset \R$.

\begin{thm}\label{thm:area3app1}
Let $M$ be an arithmetic hyperbolic $3$--manifold which is derived from a quaternion algebra and has geodesics with complex lengths $\ell_1,\dots,\ell_N$. If $\lambda_i$ is not real for some $i$, then exactly one of the following is true:
\begin{enumerate}
\item There are only finitely many quaternion algebras defined over the trace field $k$ of $\Gamma$ which are ramified at all real places of $k$ and admit embeddings of $k(\lambda_i)$ for all $i$. In this case there are positive real numbers $c$ and $V_0$ such that $N^3_{\ell_1,\dots,\ell_N}(V)=c$ for all $V>V_0$.
\item There are infinitely many commensurability classes of hyperbolic $3$--orbifolds that contain an orbifold that has geodesics with complex lengths $\ell_1,\dots,\ell_N$. In this case there exist integers $1\leq r,s\leq N$ such that
\[ V/\log(V)^{1-\frac{1}{2^s}} \ll N^3_{\ell_1,\dots,\ell_N}(V)\ll V/\log(V)^{1-\frac{1}{2^r}}, \] where the implicit constants depend only on $k$ and ${\ell_1,\dots,\ell_N}$.
\end{enumerate}
\end{thm}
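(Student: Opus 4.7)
The plan is to reduce the counting of commensurability classes to the counting of quaternion algebras via Theorem \ref{thm:quatembedding}, and then convert the resulting discriminant count into a volume count via Borel's volume formula.

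The first step is to argue that the hypothesis that some $\lambda_i$ is not real essentially pins down the invariant trace field of any commensurability class of arithmetic hyperbolic $3$--manifolds containing closed geodesics of the prescribed (complex) lengths $\ell_1,\dots,\ell_N$. If $\Gamma'$ is such an arithmetic Kleinian group with invariant trace field $k'$ containing a hyperbolic element whose associated geodesic has complex length $\ell_i$, then the eigenvalue of this element agrees with $\lambda_i$ up to the ambiguity $\lambda\mapsto \lambda^{\pm 1}$, whence $\lambda_i^2+\lambda_i^{-2}\in k'$. Since $k'$ has a unique complex place (it being the invariant trace field of an arithmetic hyperbolic $3$--manifold), the non-realness of $\lambda_i$ for some $i$ forces $k'$ to agree with the invariant trace field $k$ of $\Gamma$. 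With $k$ fixed, a commensurability class with geodesics of the required lengths is determined by a quaternion algebra $B'/k$ ramified at all real places of $k$ and admitting embeddings of each quadratic extension $L_i:=k(\lambda_i)$.

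If there are only finitely many such $B'$, then case (1) follows: the function $N^3_{\ell_1,\dots,\ell_N}(V)$ stabilizes at this finite number once $V$ exceeds the maximum of their minimum covolumes. Otherwise, let $r$ be the integer defined by $[L:k]=2^r$ for $L$ the compositum of a maximal linearly disjoint sub-family of the $L_i$. Theorem \ref{thm:quatembedding} then yields
\[ \#\{B'/k \text{ admitting all } L_i,~ \abs{\disc(B')}\leq x\} \sim \delta \cdot x^{1/2}/(\log x)^{1-1/2^r}. \]
Borel's volume formula, as used in the proof of Lemma \ref{lem:Bdiscbound}, gives the asymptotic relation $V_{\scrC}\asymp_k \abs{\disc(B')}^{1/2}$ between the minimum covolume in the commensurability class associated to $B'$ and the norm of its discriminant. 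Substituting $x\asymp V^2$, the upper bound $N^3_{\ell_1,\dots,\ell_N}(V)\ll_M V/(\log V)^{1-1/2^r}$ follows. For the lower bound, one must verify that a positive density of the counted algebras genuinely yields commensurability classes containing geodesics of all $N$ prescribed lengths; this may necessitate using a possibly smaller exponent $s\leq r$ corresponding to a sub-family of the $L_i$'s for which the existence of the corresponding loxodromic elements is automatic from the embedding data and Proposition \ref{proposition:derivedcovolume}.

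The main obstacle will be the first step: rigorously constraining the invariant trace field of every competing commensurability class to equal $k$, rather than to some strictly larger number field containing $k$. The non-realness of $\lambda_i$ is essential here, as it rules out purely real loxodromic eigenvalues whose associated field extensions could be accommodated by invariant trace fields that sit in a much larger family. Handling the remaining possibility that some competing $k'$ strictly contains $k$ will require showing that such larger fields contribute only finitely many classes (or at any rate negligibly to the asymptotic count), most likely via a discriminant-growth argument combined with the Odlyzko--Poitou bounds used in the proof of Lemma \ref{lem:gammaindexbound}.
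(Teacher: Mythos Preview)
Your overall strategy matches the paper's: pin down the invariant trace field, reduce to counting quaternion algebras over $k$ admitting embeddings of all the $L_i=k(\lambda_i)$ via Theorem~\ref{thm:quatembedding}, and translate discriminant into volume using Borel's formula. Two of your steps, however, miss the mark.

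First, the ``main obstacle'' you isolate---ruling out competing invariant trace fields $k'\supsetneq k$---is not an obstacle at all, and your proposed workaround via Odlyzko--Poitou discriminant bounds is misdirected. The decisive fact, which the paper cites as Lemma~2.3 of \cite{chinburg-geodesics}, is that when a hyperbolic element $\gamma$ of a group derived from a quaternion algebra has non-real eigenvalue, the invariant trace field \emph{equals} $\Q(\tr\gamma)$, not merely contains it. After replacing $\gamma_i'$ by a power so that it lies in $(\Gamma')^{(2)}$, the length formula gives $\tr(\gamma_i')=\pm\tr(\gamma_i)$ up to complex conjugation, and hence $k'=\Q(\tr\gamma_i')=\Q(\tr\gamma_i)=k$. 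No further argument is needed. This is precisely why the paper cannot prove an upper bound in the companion Theorem~\ref{thm:area3app2}, where all $\lambda_i$ are real and the equality $k'=\Q(\tr\gamma)$ fails.

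Second, your lower-bound argument invokes the wrong tool. Proposition~\ref{proposition:derivedcovolume} produces, in a given group, a hyperbolic element generating the field $L_i$, but with no control forcing its length to equal $\ell_i$. The paper instead uses Proposition~\ref{proposition:alwaysexistsderived}: by the Chinburg--Friedman selectivity theorem, whenever $B'$ is ramified at some finite prime, every maximal order $\calO'$ of $B'$ admits an embedding of each specific quadratic order $\Omega_i=\calO_k[\gamma_i]$. Hence $\Gamma_{\calO'}$ contains conjugates of the original elements $\gamma_i$ themselves, and so has closed geodesics of exactly the prescribed lengths $\ell_1,\dots,\ell_N$. This gives the lower bound directly from the same algebra count, without the hedging you introduce about ``positive density'' or a separate exponent $s$.
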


\begin{rmk}
We remark that \cite[Lemma 2.3]{chinburg-geodesics} shows that if $k$ is not a quadratic extension of $k^+$ then $\Gamma$ will have no hyperbolics with real eigenvalue. In this situation the hypotheses of Theorem \ref{thm:area3app1} will always be satisfied.
\end{rmk}

The techniques used to prove Theorem \ref{thm:area3app1} can be applied, in much the same manner, to prove the following result.

\begin{thm}\label{thm:area3app2}
Let $M$ be an arithmetic hyperbolic $2$--manifold (resp., $3$--manifold) which is derived from a quaternion algebra and contains geodesics with lengths (resp., complex lengths) $\ell_1,\dots,\ell_N$. If $\lambda_i$ is real for all $i$, then exactly one of the following is true:
\begin{enumerate}
\item There are only finitely many quaternion algebras defined over the trace field $k$ of $\Gamma$ which are ramified at all but one real places of $k$ (resp., at all real places of $k$) and admit embeddings of $k(\lambda_i)$ for all $i$.
\item There are infinitely many commensurability classes of hyperbolic $2$--orbifolds (resp., $3$--orbifolds) that contain an orbifold that has geodesics with lengths (resp., complex lengths) $\ell_1,\dots,\ell_N$ and trace field $k$. In this case there exist integers $1\leq r,s,t\leq N$ such that
\begin{align*}
V/\log(V)^{1-\frac{1}{2^r}} &\gg N^2_{\ell_1,\dots,\ell_N}(V)\gg V/\log(V)^{1-\frac{1}{2^s}}\qquad (\mbox{resp., } N^3_{\ell_1,\dots,\ell_N}(V) \gg V/\log(V)^{1-\frac{1}{2^t}}).
\end{align*}
\end{enumerate}
\end{thm}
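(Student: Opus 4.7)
The plan is to parallel the proof of Theorem~\ref{thm:area3app1} by translating the counting problem for commensurability classes into the algebraic count supplied by Theorem~\ref{thm:quatembedding}. Set $L_i = k(\lambda_i)$ for $i=1,\dots,N$; each $L_i$ is a quadratic extension of $k$, and because every $\lambda_i$ is real, each $L_i$ has a real place above the distinguished (real in the Fuchsian case, complex in the Kleinian case) place of $k$. By Theorem~\ref{CommensurabilityInvariants}, commensurability classes of arithmetic hyperbolic $2$-- or $3$--manifolds with invariant trace field $k$ are in bijection with $k$--quaternion algebras $B'$ carrying the prescribed real ramification pattern, and a manifold in such a class can contain closed geodesics of lengths $\ell_1,\dots,\ell_N$ only if every $L_i$ embeds into $B'$. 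The dichotomy between (1) and (2) then follows from the remark after Theorem~\ref{thm:quatembedding}: letting $\tilde{\chi}_i$ be the character of $\Gal(L_1\cdots L_N/k)$ cutting out $L_i$, infinitely many $B'/k$ admit all these embeddings precisely when no finite odd-order subfamily of $\{\tilde{\chi}_i\}$ multiplies to the trivial character. In case~(1) only finitely many commensurability classes contribute.

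For the upper bound in case~(2) of the Fuchsian statement, I would use Borel's covolume formula: with $k$ fixed, the minimum covolume in the commensurability class of $B'/k$ satisfies
\[ V_{\scrC} \;\asymp_k\; \prod_{\pp \mid \disc(B')}\bigl(\abs{\pp}-1\bigr) \;\asymp_k\; \abs{\disc(B')}^{1/2}, \]
since $\abs{\disc(B')}$ is the square of the product of the ramified finite primes. Any class with $V_{\scrC}\leq V$ thus has $\abs{\disc(B')}\ll_k V^2$, and Theorem~\ref{thm:quatembedding} (together with its closing remark to accommodate a possibly Galois-dependent family of $L_i$) counts such algebras as $\ll_k V/\log(V)^{1-1/2^r}$, where $2^r$ is the degree over $k$ of the compositum of a maximal Galois-independent subfamily of the $L_i$. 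The finitely many choices of real ramification profile at the $r_1(k)$ real places of $k$ inflate only the implied constant.

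For the lower bounds in case~(2), I would manufacture an explicit orbifold inside each qualifying commensurability class. For each $i$, fix an embedding $L_i\hookrightarrow B'$ and identify $\lambda_i\in\calO_{L_i}^1$ with its image; the finitely generated $\calO_k$--subring $\calO_k[\lambda_1,\dots,\lambda_N]\subset B'$ consists of integral elements and is therefore contained in a common maximal order $\calE$ of $B'$. The lattice $\Gamma_{\calE}$ then contains each $\lambda_i$, so by the trace--length formula~\eqref{TraceLengthFormula} the resulting orbifold has closed geodesics of lengths $\ell_1,\dots,\ell_N$, while its covolume is again $\asymp_k \abs{\disc(B')}^{1/2}$ by Borel. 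Substituting $\abs{\disc(B')}\asymp V^2$ into the lower bound provided by Theorem~\ref{thm:quatembedding} produces the asserted $\gg_M V/\log(V)^{1-1/2^s}$ (resp.\ $\gg_M V/\log(V)^{1-1/2^t}$). The $M$--dependence of the implied constant enters through the heights and regulators of the $L_i$, which are fixed data determined by the starting manifold rather than by $V$.

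The main obstacle is the precise matching of the exponents $r$ and $s$, and the reason no Kleinian upper bound is asserted: the Galois-independent subfamily used in the upper bound may be strictly larger than the one one can enforce while simultaneously requiring the lattice to contain all of the $\lambda_i$, which accounts for the potential gap $s\leq r$. In the Kleinian setting the additional flexibility afforded by the complex place of $k$ admits families of $L_i$ embeddings not directly captured by the counting machinery of Theorem~\ref{thm:quatembedding}, and ruling out the extra commensurability classes these could in principle produce would require a separate argument beyond the scope of the present lower-bound construction.
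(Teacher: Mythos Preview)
Your overall strategy---parallel Theorem~\ref{thm:area3app1} and feed Borel's volume formula into Theorem~\ref{thm:quatembedding}---is the paper's strategy, but two of your steps do not go through as written.

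\textbf{Lower bound.} The sentence ``the finitely generated $\calO_k$--subring $\calO_k[\lambda_1,\dots,\lambda_N]\subset B'$ consists of integral elements and is therefore contained in a common maximal order'' is false in general: in a quaternion algebra the integral elements do \emph{not} form a ring, so a product of the $\lambda_i$ need not be integral and the subring they generate need not be a finitely generated $\calO_k$--module. You therefore cannot conclude it sits inside any order. The paper avoids this via Proposition~\ref{proposition:alwaysexistsderived}: Chinburg--Friedman selectivity guarantees that, provided $B'$ ramifies at some finite prime, \emph{every} maximal order of $B'$ contains a conjugate of each quadratic order $\Omega_i=\calO_k[\gamma_i]$. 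One fixed maximal order $\calO$ then already carries all the required geodesic lengths, and the finitely many $B'$ unramified at every finite prime are absorbed into the implied constant.

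\textbf{The missing Kleinian upper bound.} Your diagnosis here is off. The obstruction is not about matching exponents or ``flexibility at the complex place''; it is that the invariant trace field is no longer pinned down. When $\lambda_i\in\R$, Lemma~2.3 of \cite{chinburg-geodesics} gives $\Q(\tr\gamma_i)=k^+$, not $k$. Hence an arithmetic Kleinian group whose orbifold contains geodesics of lengths $\ell_1,\dots,\ell_N$ need only have invariant trace field equal to \emph{some} quadratic extension of $k^+$ with one complex place---not necessarily $k$. Counting quaternion algebras over $k$ alone therefore cannot bound $N^3_{\ell_1,\dots,\ell_N}(V)$ from above. In the Fuchsian case this issue disappears: Proposition~\ref{prop:fuchsiantracefield} (the substitute the paper records precisely for this purpose) forces $k=\Q(\tr\gamma)$ for every hyperbolic $\gamma$, so any competing arithmetic Fuchsian group must share the trace field $k$, and your upper-bound argument then goes through. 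You should invoke that proposition explicitly rather than silently restricting to trace field $k$ from the outset.
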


Note that Theorems \ref{thm:area3app1} and \ref{thm:area3app2} both deal with geodesics on arithmetic hyperbolic $2$-- and $3$--manifolds which are derived from quaternion algebras, as opposed to arbitrary arithmetic hyperbolic $2$-- and $3$--manifolds. The reason for this restriction amounts to the following observation (which will be made more precise and proven as part of the proof of Theorem \ref{thm:area3app1}). Let $M$ be as in Theorem \ref{thm:area3app1}. The commensurability classes of arithmetic hyperbolic $3$--orbifolds that contain an orbifold that has geodesics with complex lengths $\ell_1,\dots,\ell_N$ are in one-to-one correspondence with quaternion algebras over $k$ which ramify at all real places of $k$ and admit embeddings of $k(\lambda_1),\dots,k(\lambda_N)$. This correspondence breaks down however, when the manifold $M$ is arithmetic but not necessarily derived from a quaternion algebra. In this more general setting however, we are able to prove the following.

\begin{thm}\label{thm:area3app3}
Let $M$ be an arithmetic hyperbolic $2$--manifold (resp., $3$--manifold) that has geodesics with lengths (resp., complex lengths) $\ell_1,\dots,\ell_N$. If there are infinitely many primes of $k$ which do not split in any of the extensions $k(\lambda_i)/k$ then there are infinitely many commensurability classes of hyperbolic 2--orbifolds (resp., $3$--orbifolds) that contain an orbifold that has geodesics with lengths (resp., complex lengths) $\ell_1,\dots,\ell_N$.
\end{thm}

\begin{rmk}
If there are only finitely many primes of $k$ which do not split in any of the extensions $k(\lambda_i)/k$, then there are at most finitely many commensurability classes of hyperbolic $2$-- or $3$--orbifolds that contain an orbifold that has geodesics with (complex) lengths $\ell_1,\dots,\ell_N$ \textit{and} trace field $k$. In many situations however (for instance if $M$ is a $2$-- or $3$--manifold such that $\{ \lambda_1,\dots,\lambda_N\}\not\subset \R$), any arithmetic hyperbolic $2$--orbifold (resp., $3$--orbifold) that has geodesics with lengths (resp., complex lengths) $\ell_1,\dots,\ell_N$ must have trace field $k$. See for instance Proposition \ref{prop:fuchsiantracefield} and \cite[Lemma 2.3]{chinburg-geodesics}.  In these situations the hypothesis in Theorem \ref{thm:area3app3} is necessary and sufficient.
\end{rmk}

\subsubsection{Proof of Theorem \ref{thm:area3app1}}

We begin with a proposition that will be needed in the proof of Theorem \ref{thm:area3app1}.

\begin{prop}\label{proposition:alwaysexistsderived}
Let $B/k$ be a quaternion algebra which admits embeddings of $k(\lambda_1),\dots,k(\lambda_N)$ and $\calO \su B$ be a maximal order. If $\Ram_f(B) \ne \emptyset$, then the orbifold associated to $\Gamma_\calO$ has geodesics with (complex) lengths $\ell_1,\dots,\ell_N$.
\end{prop}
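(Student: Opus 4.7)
The strategy is to produce, for each $i$, a reduced--norm--one element of the given maximal order $\calO$ whose reduced trace matches that of a lift of $\gamma_i$, and then to use the trace--length formula \eqref{TraceLengthFormula} to conclude that the corresponding element of $\Gamma_{\calO}$ contributes a closed geodesic of length $\ell_i$ on the associated orbifold. The crucial observation driving the argument is that the hypothesis that some finite prime of $k$ ramifies in $B$ forces each quadratic subfield $L_i:=k(\lambda_i)$ to lie outside the class field $k_B$, which is exactly the hypothesis needed to invoke the Chinburg--Friedman selectivity theorem and embed $\calO_k[\lambda_i]$ into \emph{every} maximal order of $B$.

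First I would verify that $L_i\not\subset k_B$ for each $i$. By the characterization of $k_B$ recorded in Section \ref{Background}, the extension $k_B/k$ is unramified at every finite prime of $k$ and every finite prime in $\Ram(B)$ splits completely in $k_B$. In particular, the distinguished finite prime $\qq\in\Ram_f(B)$ supplied by the hypothesis splits completely in $k_B$. On the other hand, since $L_i$ embeds into $B$ as a $k$--algebra, every prime in $\Ram(B)$ must be non-split (inert or ramified) in $L_i$: if a ramified prime $\pp$ split in $L_i$, then $L_i\otimes_k k_\pp\cong k_\pp\oplus k_\pp$ would embed into the division algebra $B\otimes_k k_\pp$, which is impossible. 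Hence if $L_i\subset k_B$, then $\qq$ would split in $L_i$, a contradiction.

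With $L_i\not\subset k_B$ in hand, the Chinburg--Friedman selectivity theorem \cite{Chinburg-Friedman-selectivity} (see also \cite{linowitz-selectivity}) guarantees that $\calO_k[\lambda_i]$ embeds into every maximal order of $B$, and in particular into the prescribed order $\calO$ via some $\iota_i\colon\calO_k[\lambda_i]\hookrightarrow\calO$. Since $\gamma_i$ lifts to a reduced--norm--one element, we have $\lambda_i\sigma(\lambda_i)=1$ where $\sigma$ generates $\Gal(L_i/k)$, and so $\nr(\iota_i(\lambda_i))=N_{L_i/k}(\lambda_i)=1$. Thus $\iota_i(\lambda_i)\in\calO^1$, and its image in $\Gamma_\calO$ under the representation $\rho\colon B\hookrightarrow\MM(2,\C)$ attached to the distinguished complex place of $k$ is the sought after element $\gamma_i'$.

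To finish, I would observe that the two eigenvalues of $\rho(\iota_i(\lambda_i))$ in $\MM(2,\C)$ are the images of $\lambda_i$ and $\sigma(\lambda_i)=\lambda_i^{-1}$ under the embeddings of $L_i$ into $\C$ extending the distinguished complex place of $k$; up to complex conjugation (reflecting the ambiguity in choosing a representative embedding of the complex place), these are simply $\lambda_i$ and $\lambda_i^{-1}$. Hence $\gamma_i'$ is hyperbolic of translation length $2\log|\lambda_i|=\ell_i$ by \eqref{TraceLengthFormula}, yielding a closed geodesic of length $\ell_i$ on the orbifold associated to $\Gamma_{\calO}$. Running this construction for $i=1,\dots,N$ completes the plan. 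The main obstacle is the splitting argument establishing $L_i\not\subset k_B$; once that is in place, the selectivity theorem and the standard trace--length dictionary do all of the remaining work.
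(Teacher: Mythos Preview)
Your proof is correct and follows essentially the same line as the paper's: invoke the Chinburg--Friedman selectivity theorem to embed the quadratic order $\calO_k[\lambda_i]$ into the given maximal order $\calO$, and then read off the geodesic length from the trace/eigenvalue. The only difference is cosmetic: the paper cites \cite[Theorem 3.3]{Chinburg-Friedman-selectivity} directly (since the presence of a finite ramified prime already rules out the selective case), whereas you unpack this by showing $L_i\not\subset k_B$ first---a correct and slightly more self-contained justification of the same step.
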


\begin{proof}
For each $i=1,\dots,N$, fix a quadratic $\calO_k$--order $\Omega_i\subset k(\lambda_i)$ which contains a pre-image in $k(\lambda_i)$ of $\gamma_i$. As $B$ ramifies at a finite prime of $k$, by \cite[Thm 3.3]{Chinburg-Friedman-selectivity}, every maximal order of $B$, in particular $\calO$, contains a conjugate of all of the quadratic orders $\Omega_i$. The proposition now follows from the fact that the (complex) length of the geodesic associated to $\gamma_i$ coincides with the (complex) length of the geodesic associated to any conjugate of $\gamma_i$.
\end{proof}

\begin{proof}[Proof of Theorem \ref{thm:area3app1}]
Let $k,B$ denote the trace field and quaternion algebra of $\Gamma$ and for $i=1,\dots,N$, let $L_i$ denote the quadratic extension $k(\lambda_i)$ of $k$. By hypothesis there exists an $i$ such that $\lambda_i\notin \R$. By \cite[Lemma 2.3]{chinburg-geodesics}, the image in $\C$ of $k$ is $\Q(\tr(\gamma_i))=\Q(\lambda_i+\lambda_i^{-1})$. Throughout the remainder of this proof we will identify $k$ with its image in $\C$. Suppose that $\Gamma^\prime$ is an arithmetic Kleinian group such that the quotient orbifold has geodesics with complex lengths $\ell_1,\dots,\ell_N$. Taking powers of the elements $\gamma_i$ as needed, we may assume that $\Gamma^\prime$ is derived from a quaternion algebra. Let $k^\prime,B'$ denote the trace field and quaternion algebra of $\Gamma^\prime$. By (\ref{TraceLengthFormula}), if $\gamma_i^\prime \in \Gamma^\prime$is hyperbolic with associated geodesic of complex length $\ell_i$, then $\tr(\gamma_i)=\pm \tr(\gamma_i^\prime)$. In particular, up to complex conjugation, we have $k=\Q(\tr(\gamma_i))=\Q(\tr(\gamma_i^\prime))=k^\prime$. We may therefore suppose that $B^\prime$ is defined over $k$. The results of \cite[Ch 12]{MR} now imply that $B^\prime$ admits embeddings of $L_1,\dots,L_N$. Conversely, suppose that $B^\prime/k$ is a quaternion algebra which satisfies the following two conditions:

\begin{enumerate}
\item $B^\prime$ is ramified at all real places and at at least one finite prime of $k$,
\item $B^\prime$ admits embeddings of $L_1,\dots,L_N$.
\end{enumerate}

Proposition \ref{proposition:alwaysexistsderived} then shows that if $\calO^\prime$ is a maximal order of $B^\prime$ then $\Gamma_{\calO^\prime}$ is an arithmetic Kleinian group whose quotient orbifold has geodesics with complex lengths $\ell_1,\dots,\ell_N$. Putting these together, we see that $N_{\ell_1,\dots,\ell_N}(V)$ is asymptotic to the number of isomorphism classes of quaternion algebras over $k$ which are ramified at all real places of $k$ and which admit embeddings of $L_1,\dots,L_N$. (Note that all but finitely many quaternion algebras over $k$ ramify at a finite prime of $k$.) The first assertion is an immediate consequence of this.

In order to prove the second assertion we first show that $N^3_{\ell_1,\dots,\ell_N}(V)\ll V/\log(V)^{1-\frac{1}{2^r}}$ for some $1\le r \le N$. Suppose that $\Gamma^\prime$ is an arithmetic Kleinian group whose quotient orbifold has geodesics with complex lengths $\ell_1,\dots,\ell_N$ and let $V^\prime$ denote the covolume of $\Gamma^\prime$. If we let $V_{\mathscr C}$ denote the volume of a minimal volume orbifold in the commensurability class $\mathscr C$ of $\Gamma$, then $V^\prime\geq V_{\mathscr C}$. Borel's formula \cite{borel-commensurability} for $V_{\mathscr C}$ makes it clear that there exists a constant $c$, which depends on $k$, such that $V_{\mathscr C}\geq c \abs{\disc(B^\prime)}$ where $B^\prime$ is the quaternion algebra of $\Gamma^\prime$ and $\abs{\disc(B^\prime)}$ the norm of its discriminant. It follows from the discussion above that $B^\prime$ is defined over $k$ and it is clear that $B^\prime$ admits embeddings of $L_1,\dots,L_N$. Hence, the number of commensurability classes of arithmetic hyperbolic $3$--orbifolds that contain an orbifold that has geodesics with complex lengths $\ell_1,\dots,\ell_N$ is at most a constant multiple of the number of quaternion algebras over $k$ which admit embeddings of $L_1,\dots,L_N$. As $\abs{\disc(B^\prime)}\leq cV^\prime$, that $N^3_{\ell_1,\dots,\ell_N}(V)\ll V^{1/2}/\log(V)^{1-\frac{1}{2^r}}$ for some $1\le r \le N$ now follows from Theorem \ref{thm:quatembedding}. The proof that $N^3_{\ell_1,\dots,\ell_N}(V)\gg V/\log(V)^{1-\frac{1}{2^s}}$ for some $1\leq s\le N$ follows from the same ideas though applied to orbifolds of the form considered in Proposition \ref{proposition:alwaysexistsderived}.
\end{proof}

\subsubsection{Remarks about the proof of Theorem \ref{thm:area3app2}}\label{subsection:area3app2proof}

The proof of Theorem \ref{thm:area3app2} follows from the same arguments that were used to prove the analogous statements in Theorem \ref{thm:area3app1}, hence we omit it. We do, however, record the following proposition which serves as a substitute for Lemma 2.3 of \cite{chinburg-geodesics} in the Fuchsian case.

\begin{prop}\label{prop:fuchsiantracefield}
Let $\Gamma$ be an arithmetic Fuchsian group derived from a quaternion algebra $B/k$. If $\gamma\in\Gamma$ is a hyperbolic with eigenvalue $\lambda_\gamma$ then $k=\Q(\tr(\gamma))$.
\end{prop}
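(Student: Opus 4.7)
My plan is to exploit the archimedean signature of $\tr(\gamma)$, using that $B$ is ramified at every real place of $k$ except one, to force the subfield $\Q(\tr(\gamma))$ to distinguish all real embeddings of $k$, and hence to coincide with $k$.

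First, I would record the inclusion $\Q(\tr(\gamma)) \subseteq k$. Since $\Gamma$ is derived from $B/k$, every $\gamma \in \Gamma$ is (up to the projection and the choice of representation) the image of a norm-one element of a maximal order $\calO \subset B$. The reduced trace $\trd$ takes values in $k$, and via the isomorphism $B \otimes_{k,\pp_1} \R \cong \MM(2,\R)$ at the distinguished real place $\pp_1$ (the unique place where $B$ is unramified), reduced trace on $B$ coincides with the matrix trace on $\MM(2,\R)$. Hence $\tr(\gamma) = \trd(\gamma) \in k$, and a fortiori $F := \Q(\tr(\gamma)) \subseteq k$.

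Next I would compute the archimedean data of $\tr(\gamma)$. Enumerate the real places of $k$ as $\pp_1, \dots, \pp_{n_k}$ with $\pp_1$ the distinguished one. At $\pp_1$, the value $\pp_1(\tr(\gamma))$ is the ordinary matrix trace of $\gamma \in \SL(2,\R)$, so $\abs{\pp_1(\tr(\gamma))} > 2$ by the hypothesis that $\gamma$ is hyperbolic. For $j \ge 2$, the place $\pp_j$ is ramified in $B$, so $B \otimes_{k,\pp_j} \R \cong \mathbb{H}$, and the image of $\gamma$ under this isomorphism lies in the group $\mathbb{H}^1$ of norm-one Hamilton quaternions. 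A direct computation (any unit quaternion $a + bi + cj + dk$ with $a^2+b^2+c^2+d^2 = 1$ has reduced trace $2a$) yields $\abs{\pp_j(\tr(\gamma))} \le 2$ for every $j \ge 2$.

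Finally I would derive $F = k$ by a counting argument on real embeddings. Since $k$ is totally real and $F \subseteq k$, $F$ is also totally real. Let $\tau$ denote the restriction of $\pp_1$ to $F$, and set $d = [k:F]$. Every embedding of $F$ into $\R$ has exactly $d$ extensions to a real embedding of $k$, because $k/F$ is a subextension of the totally real field $k/\Q$. Thus $\tau$ admits exactly $d$ extensions among $\pp_1, \dots, \pp_{n_k}$. If $d \ge 2$, at least one such extension is some $\pp_j$ with $j \ge 2$; but $\tr(\gamma) \in F$ forces $\pp_j(\tr(\gamma)) = \tau(\tr(\gamma)) = \pp_1(\tr(\gamma))$, which is incompatible with $\abs{\pp_1(\tr(\gamma))} > 2$ and $\abs{\pp_j(\tr(\gamma))} \le 2$. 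Therefore $d = 1$ and $k = F = \Q(\tr(\gamma))$.

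The only potentially subtle step is the archimedean bound in step two, where one must be careful to distinguish the reduced trace of the quaternion algebra from matrix/quaternion traces under the localizations; once this identification is correctly set up, the rest is a short pigeonhole argument on real extensions.
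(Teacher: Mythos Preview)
Your argument is correct and is a genuinely different route from the paper's. The paper proves the proposition by embedding $\Gamma$ into an arithmetic Kleinian group $\Gamma_0$ derived from a quaternion algebra whose invariant trace field $k_0$ is a quadratic extension of $k$ with a unique complex place; it then invokes \cite[Lemma 2.3]{chinburg-geodesics}, which says that for a hyperbolic element of such a Kleinian group with real eigenvalue one has $[k_0:\Q(\tr(\gamma))]=2$ and $\Q(\tr(\gamma))=k_0^{+}=k$. Your approach bypasses the Kleinian embedding entirely and works directly with the archimedean signature of $\tr(\gamma)$: the single split real place gives $\abs{\tr(\gamma)}>2$, the ramified real places give $\abs{\tr(\gamma)}\le 2$ via the compactness of $\mathbb{H}^1$, and a pigeonhole on extensions of real embeddings of $F=\Q(\tr(\gamma))$ to $k$ forces $[k:F]=1$. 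This is essentially the classical Takeuchi-style argument and is more elementary and self-contained; the paper's version has the advantage of formal parallelism with the Kleinian case it treats elsewhere, at the cost of constructing the auxiliary $3$--manifold and citing an external lemma.
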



\begin{proof}
Let $\Gamma_0$ be an arithmetic Kleinian group derived from a quaternion algebra which contains $\Gamma$ and whose trace field $k_0$ is a quadratic extension of $k$. Note, the existence of such a group $\Gamma_0$ follows from the results in \cite[Ch 9]{MR}. Set $F=\Q(\tr(\gamma))$. Since $\gamma\in\Gamma_0$ and $\lambda_\gamma\in\R$, \cite[Lemma 2.3]{chinburg-geodesics} shows that $[k_0:F]=2$ and that $F$ is the maximal totally real subfield of $k_0$. It is now clear that $F=k$, completing the proof.
\end{proof}

We now make a few comments about why the techniques used to prove Theorem \ref{thm:area3app1} do not suffice to prove an upper bound for $N^3_{\ell_1,\dots,\ell_N}(V)$. The upper bound for $N^3_{\ell_1,\dots,\ell_N}(V)$ in Theorem \ref{thm:area3app1} relied upon the fact that any arithmetic hyperbolic $3$--orbifold that has geodesics with complex lengths $\ell_1,\dots,\ell_N$ necessarily has $k$ as its trace field. Hence, it is obtained from counting quaternion algebras over $k$ admitting embeddings of $k(\lambda_1),\dots,k(\lambda_N)$. Whereas this is the case for arithmetic Fuchsian groups by Proposition \ref{prop:fuchsiantracefield}, it is not necessarily the case for $3$--orbifolds in the context of Theorem \ref{thm:area3app2}. Indeed, let $k^+$ denote the maximal totally real subfield of $k$ and assume that $k$ is a quadratic extension of $k^+$. Lemma 2.3 of \cite{chinburg-geodesics} shows that $k=\Q(\tr(\gamma_i))$ if $\lambda_i$ is not real and $k^+=\Q(\tr(\gamma_i))$ if $\lambda_i$ is real. As a consequence the trace field of an arithmetic hyperbolic $3$--orbifold that has geodesics with complex lengths $\ell_1,\dots,\ell_N$ is a quadratic extension of $k^+$. This does not imply that this trace field is equal to $k$. In theory one could obtain an upper bound for $N^3_{\ell_1,\dots,\ell_N}(V)$ by counting the number of quadratic extensions of $k^+$ with norm of relative discriminant less than some bound and having a unique complex place and then multiplying this count by the number of quaternion algebras defined over each field. The former count has been computed by Cohen--Diaz y Diaz--Olivier \cite[Corollary 3.14]{quadraticextensions}. The latter count is given by Theorem \ref{thm:quatembedding} and contains a constant which depends on the invariants of the particular quadratic extension of $k^+$ chosen. It is not clear how one could bound this constant with invariants of only $k^+$ due to the complexity of this constant. Those invariants also need to be directly related to the volume of the $3$--orbifold. Another difficulty is the complexity of the error terms implicit in Theorem \ref{thm:quatembedding}. Specifically, Theorem \ref{thm:quatembedding} requires that $x\to\infty$, and the rate at which $x\to\infty$ that is needed could vary along with the quadratic extension of $k^{+}$.

\subsubsection{Proof of Theorem \ref{thm:area3app3}}

We prove the theorem in the case in which $M$ is a $3$--manifold. The surface case has a proof which is virtually identical and thus left to the reader. Let $\Gamma$ be the fundamental group of $M$ and $B$ the associated quaternion algebra. We may assume without loss of generality that $\Gamma$ is a maximal arithmetic subgroup of $B^\times/k^\times$ and hence is of the form $\Gamma=\Gamma_{S,\calD}$ (in the notation of \cite[\S 4]{Chinburg-Friedman-selectivity}), where $S$ is a finite set of primes of $k$ and $\calD$ is a maximal order of $B$. For $i=1,\dots,N$, let $\overline{y_i}\in \Gamma_{S,\calD}$ be a pre-image of $\gamma_i$ in $B^\times/k^\times$ and $y_i\in k(\lambda_i)$ be a pre-image of $\gamma_i$ in $B^\times$. In order to prove the existence of infinitely many pairwise non-commensurable arithmetic hyperbolic $3$--orbifolds that contain geodesics with complex lengths $\ell_1,\dots,\ell_N$, we first construct an infinite number of quaternion algebras $B_j/k$ (each of which ramifies at all real places of $k$) with the property that for every $j$, the finite part of $\disc(B)$ is a proper divisor of the finite part of $\disc(B_j)$. We will then construct, for every $j$, a maximal arithmetic subgroup $\Gamma_j$ of $B_j^\times/k^\times$ such that $\Gamma_j$ contains, for $i=1,\dots,N,$ an element with the same trace and norm as $\overline{y_i}$. It will follow that the associated orbifold will have geodesics with complex lengths $\ell_1,\dots,\ell_N$.

Our construction of the algebras $B_j$ is straightforward. Let $\pp_1,\pp_2,\dots$ be an infinite sequence of primes of $k$ which do not split in any of the extensions $k(\lambda_i)/k$. Pruning this sequence as needed, we may assume that none of the primes $\pp_i$ lie in the finite set $S$ of primes mentioned in the previous paragraph nor do they divide $\disc(B)$. Consider the sequence of moduli $\set{\disc(B)\pp_1\pp_j}_{j>1}$. As $\disc(B)$ must have an even number of divisors, as do the discriminants of all quaternion algebras over number fields, each of these moduli has an even number of divisors. Hence, there exist quaternion algebras $B_1,B_2,B_3\dots$ having these as their discriminants and these algebras are pairwise non-isomorphic. Also note that by the Albert--Brauer--Hasse--Noether theorem, the quadratic extension $k(\lambda_i)/k$ will embed into $B_j$ if and only if no prime which ramifies in $B_j$ splits in $k(\lambda_i)/k$. As the extension $k(\lambda_i)/k$ embeds into $B$, none of the divisors of $\disc(B)$ split in any of the extensions $k(\lambda_i)/k$. Further, by hypothesis no prime in the sequence $\pp_1,\pp_2,\dots$ splits in any of the extensions $k(\lambda_i)/k$. We conclude that all of the algebras $B_j$ admit embeddings of all of the extensions $k(\lambda_i)/k$.

To construct the maximal arithmetic subgroups $\Gamma_j$ of $B_j^\times/k^\times$, we need the following result from \cite[Thm 4.4]{Chinburg-Friedman-selectivity}:

\begin{thm}[Chinburg--Friedman]\label{thm:maximalselectivity}
Let $k$ be a number field and $B/k$ be a quaternion algebra in which at least one archimedean place of $k$ is unramified. Suppose that $y\in B^\times$ and consider the maximal arithmetic subgroup $\Gamma_{S,\calD}$ of $B^\times/k^\times$. If a conjugate of the image $\overline{y}\in B^\times/k^\times$ of $y$ is contained in $\Gamma_{S,\calD}$ then the following three conditions hold:

\begin{enumerate}
\item $\disc(y)/\Norm(y)\in\calO_k$,
\item If an odd power of $\pp$ appears in the prime factorization of $n(y)$ ($y$ is odd at $\pp$), then $\pp\in S\cup \Ram_f(B)$,
\item For each $\pp\in S$ at least one of the following four conditions hold:
\begin{enumerate}
\item $y\in k$;
\item $y$ is odd at $\pp$;
\item $k(y)\otimes_k k_\pp$ is not a field;
\item $\pp$ divides $\disc(y)/\Norm(y)$.
\end{enumerate}
\end{enumerate}

Conversely, if conditions (1), (2) and (3) hold, then a conjugate of $\overline{y}$ is contained in $\Gamma_{S,\calD}$ except possibly when the following three conditions hold:

\begin{enumerate}
\setcounter{enumi}{3}
\item $k(y)\subset B$ is a quadratic field extension of $k$.
\item The extension $k(y)/k$ and the algebra $B$ are both unramified at all finite primes of $k$ and ramify at precisely the same (possibly empty) set of real places of $k$. Further, all primes $\pp\in S$ split in $k(y)/k$.
\item All primes $\pp$ dividing $\disc(y)/\Norm(y)$ split in $k(y)/k$.
\end{enumerate}

Suppose now that (1)-(6) hold. In this case the number of $S$--types of maximal orders $\calD$ of $B$ is even and the $\calD$ for which a conjugate $\overline{y}$ belongs to $\Gamma_{S,\calD}$ comprise exactly half of the $S$--types.
\end{thm}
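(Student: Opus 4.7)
The plan is to establish both necessity and sufficiency of conditions (1)--(3) via a local-global analysis of embeddings of $y$ into maximal orders of $B$, then to isolate the selectivity phenomenon in the presence of (4)--(6) via a class-field-theoretic reciprocity argument. For the necessity of (1)--(3), I would reduce to local questions at each finite prime $\pp$: if a conjugate of $y$ lies in the local maximal order $\calD_\pp$ (or the appropriate $S$-localized analog at $\pp \in S$), then $y$ satisfies a monic integral quadratic relation locally, forcing (1); the local behavior of $\nrd(y)$ together with the splitting behavior of $\pp$ in $k(y)/k$ forces (2); and at primes $\pp \in S$, a case analysis on the local Eichler order structure produces the disjunction in (3).

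For sufficiency, the key is the parameterization of isomorphism classes of maximal orders by the finite $2$--elementary abelian group $J_k/k^\times \nr(\mathfrak{N}(\calO))$, as reviewed in Section 2. Given $y$ satisfying (1)--(3), local analysis --- using the action on the tree of maximal orders at primes where $B$ splits, and the uniqueness of the local maximal order at primes of $\Ram_f(B)$ --- produces a non-empty set $T$ of types whose representatives contain conjugates of $\bar{y}$. The task reduces to computing $|T|$, and in particular to showing that outside the selective case $T$ exhausts the entire type set.

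The main obstacle is the selectivity dichotomy itself. When at least one of (4), (5), (6) fails, one exhibits a generating set of id\`ele classes for $J_k/k^\times\nr(\mathfrak{N}(\calO))$ (in the spirit of Lemma \ref{lem:generatingsetlemma}) whose conjugation action preserves $T$, forcing $T$ to be the whole type set. When (4)--(6) all hold simultaneously, $k(y)$ is a quadratic subfield of the class field $k_B$, so the non-trivial character of $\Gal(k(y)/k)$ cuts out a subgroup of $J_k/k^\times\nr(\mathfrak{N}(\calO))$ of index $2$; a Hasse norm principle computation, taking advantage of the compatibility of the local conditions at $S$ (governed by (6)) and at $\Ram(B)$ (governed by (5)), shows that $T$ is precisely the preimage in the type set of that index-$2$ subgroup. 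This yields $|T| = \tfrac{1}{2}|J_k/k^\times \nr(\mathfrak{N}(\calO))|$ and the ``exactly half'' statement. The delicate point is verifying that the local obstructions at the primes of $S$ and at $\Ram(B)$ cancel precisely, which is where the compatibility conditions (5) and (6) are essential.
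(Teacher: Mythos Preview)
This theorem is not proved in the paper. It is quoted verbatim as \cite[Theorem 4.4]{Chinburg-Friedman-selectivity} and used as a black box in the proof of Theorem~\ref{thm:area3app3}. There is therefore no ``paper's own proof'' to compare your proposal against.

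That said, your sketch is a faithful outline of the actual Chinburg--Friedman argument: the local analysis at each prime to obtain (1)--(3), the parameterization of types by the $2$--elementary id\`ele class group, and the identification of the selective case via the inclusion $k(y)\subset k_B$ and the associated index-$2$ subgroup. If your goal was to reconstruct their proof, you are on the right track; but for the purposes of this paper, no proof is expected here.
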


We now return to the proof of Theorem \ref{thm:area3app3}. Fix an integer $j\geq 1$ and consider the quaternion algebra $B_j$ defined above. Let $\calO \su B_j$ be a maximal order and consider the maximal arithmetic subgroup $\Gamma_j=\Gamma_{S,\calO}$ of $B_j^\times/k^\times$. As $B_j$ admits embeddings of $k(\lambda_i)/k$ for all $i$, we abuse notation and identify these extensions with their images in $B_j$. We view the $y_i$ above (in the context of the algebra $B$) as being contained in $B_j$. As the $y_i$ were all contained in $\Gamma_{S,\calD}\subset B^\times/k^\times$, we see by Theorem \ref{thm:maximalselectivity} (and because $B_j$ ramifies at a finite prime of $k$, hence condition (5) of Theorem \ref{thm:maximalselectivity} is not satisfied, and $\Ram_f(B)\subset \Ram_f(B_j)$) that conjugates of the $\overline{y_i}$ lie in $\Gamma_j\subset B_j^\times/k^\times$. Theorem \ref{thm:area3app3} follows.

\section{Proof of Theorem \ref{thm:effectiveCHLR}}\label{GeometricRigiditySection}

\subsection{A technical lemma}

Let $k$ be a number field of degree $n_k$ with integral basis $\Omega=\{\omega_1,\dots,\omega_{n_k}\}$. We endow $\calO_k$ with the $T_2$--norm by setting $T_2(x)=\sum_{\sigma: k\hookrightarrow \C} |\sigma(x)|^2$. An immediate consequence of the arithmetic-geometric mean inequality is that $T_2(x)\geq n_k$ for all $x\neq 0$. Define
\[ B(\Omega)=\prod_{\sigma: k\hookrightarrow \C} \sum_{i=1}^{n_k}|\sigma(\omega_i)|. \]

\begin{lem}\label{lem:integralbasisbound}
Let $k$ be a number field of degree $n_k\geq 2$. Then $B(\Omega)\leq 2^{n_k^3}d_k^{n_k}$.
\end{lem}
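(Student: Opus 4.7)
The plan is to deploy the geometry of numbers to replace the given basis by a Minkowski/Mahler-reduced integral basis and then bound each individual conjugate $|\sigma(\omega_i)|$ via the $T_2$-norm. I note first that $B(\Omega)$ genuinely depends on the choice of basis: replacing $\omega_1$ by $\omega_1+N\omega_2$ inflates $B(\Omega)$ without bound. I therefore read the lemma as implicitly asserting the existence of an integral basis satisfying the stated inequality, and my strategy is to exhibit one.

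First, I would consider the Minkowski embedding $\iota\colon \calO_k \hookrightarrow \R^{n_k}$, equipped with the $T_2$-inner product, as a lattice $L$ of covolume $2^{-r_2}\sqrt{d_k}$. Minkowski's second theorem, combined with a crude estimate for the volume of the Euclidean unit ball, yields successive minima $\lambda_1\leq \cdots \leq \lambda_{n_k}$ satisfying $\prod_i \lambda_i \leq 2^{n_k} n_k^{n_k/2}\sqrt{d_k}$. Mahler's classical lemma on bases relative to successive minima then produces an integral basis $\omega_1,\dots,\omega_{n_k}$ with $T_2(\omega_i)^{1/2}\leq n_k\lambda_i$. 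From now on $\Omega$ denotes this basis. The AM-GM inequality applied to $|\Norm(\alpha)|\geq 1$ gives $T_2(\alpha)\geq n_k$ for every nonzero $\alpha\in\calO_k$, so $\lambda_i\geq \sqrt{n_k}$ for every $i$, and hence
\[
\lambda_{n_k} \leq \frac{\prod_i\lambda_i}{n_k^{(n_k-1)/2}} \leq 2^{n_k}\sqrt{n_k}\sqrt{d_k}.
\]
Since $|\sigma(\omega_i)|\leq T_2(\omega_i)^{1/2}\leq n_k\lambda_{n_k}$ for every embedding $\sigma$, this produces $\sum_i|\sigma(\omega_i)|\leq 2^{n_k} n_k^{5/2}\sqrt{d_k}$.

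Taking the product over the $n_k$ embeddings gives $B(\Omega)\leq 2^{n_k^2} n_k^{5n_k/2} d_k^{n_k/2}$; using $n_k^{n_k}\leq 2^{n_k^2}$ (valid for $n_k\geq 2$) and the trivial inequality $d_k^{n_k/2}\leq d_k^{n_k}$, this is comfortably at most $2^{n_k^3} d_k^{n_k}$. The main obstacle is not conceptual but bookkeeping: one must keep careful track of the constants from Minkowski's and Mahler's theorems (volume of the unit ball, denominators in Mahler's reduction, etc.) to confirm that their $n_k$-th power fits inside the very loose slack $2^{n_k^3}$. The generosity of the stated bound makes any polynomial-in-$n_k$ upper bound on these auxiliary constants sufficient, so the verification reduces to routine estimates.
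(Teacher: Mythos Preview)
Your proof is correct. You are right that $B(\Omega)$ depends on the chosen integral basis, and both you and the paper tacitly bound it for a \emph{reduced} basis rather than an arbitrary one.

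The paper's argument is shorter but less self-contained: it invokes Theorem~3 of Peth\H{o}--Schmitt \cite{t2normbound}, which furnishes an integral basis with $\prod_{i} T_2(\omega_i) \leq 2^{n_k^2} d_k$, and then simply chains the crude inequalities $|\sigma(\omega_i)| \leq T_2(\omega_i)$ and $\sum_i T_2(\omega_i) \leq \prod_i T_2(\omega_i)$ (both valid since each $T_2(\omega_i)\geq n_k\geq 2$) before taking the $n_k$-fold product over embeddings. Your approach instead reproves the Peth\H{o}--Schmitt input from scratch via Minkowski's second theorem and Mahler's reduction lemma, and bounds the largest successive minimum directly. The trade-off is that the paper's proof is a three-line chain of inequalities at the cost of a black-box citation, while yours is fully elementary and in fact sharper in the discriminant (you obtain $d_k^{n_k/2}$ rather than $d_k^{n_k}$). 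Since the Peth\H{o}--Schmitt bound is itself established by exactly the geometry-of-numbers machinery you deploy, the two arguments are the same at heart.
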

\begin{proof}
The proof follows from the following inequalities:
\begin{align*}
B(\Omega) & = \prod_{\sigma: k\hookrightarrow \C} \sum_{i=1}^{n_k}|\sigma(\omega_i)| \leq \prod_{\sigma: k\hookrightarrow \C} \sum_{i=1}^{n_k}T_2(\omega_i) \leq \prod_{\sigma: k\hookrightarrow \C} \prod_{i=1}^{n_k}T_2(\omega_i)  \leq \prod_{\sigma: k\hookrightarrow \C}\left(2^{n_k^2}d_k\right) \leq 2^{n_k^3}d_k^{n_k},
\end{align*}
where the second to last inequality follows from \cite[Thm 3]{t2normbound}.		
\end{proof}

\subsection{Geodesics of bounded length arising from maximal subfields}

\begin{prop}\label{proposition:derivednonrealeigenvalue}
Let $\Gamma$ be an arithmetic Kleinian group with trace field $k$, quaternion algebra $B$, and covolume $V$. Then there exists a hyperbolic $\gamma\in\Gamma$ with eigenvalue $\lambda=\lambda_\gamma$ such that $\lambda^n$ is not real for any $n\geq 1$ and $\gamma$ has length at most $Ke^{\left(\log(V)^{\log(V)}\right)}$ for some absolute constant $K$.
\end{prop}

\begin{proof}
To begin, we make effective an argument from \cite[p.~10]{chinburg-geodesics}. We start by proving the existence of a hyperbolic $\gamma\in\Gamma$ with eigenvalue $\lambda$ such that $\lambda^n$ is not real for any $n>1$ and has length at most $e^{C_1V}d_L^{C_2+\log(V)}$. We first consider the case in which $k/k^+$ is a quadratic extension. By the Chebotarev density theorem, there are infinitely many rational primes $p$ which split completely in $k/\Q$ and do not divide $\abs{\disc(B)}$. To obtain an upper bound we use a modification \cite[Thm 2-C]{effectiveGW} of the bound on the least prime ideal in the Chebotarev density theorem in \cite{effectiveCDT}:

\begin{thm}[Wang]\label{thm:effectiveCDT}
Let $L/K$ be a finite Galois extension of number fields of degree $n$, $S$ a finite set of primes of $K$ and $[\theta]$ a conjugacy class in $\Gal(L/K)$. Then there is a prime ideal $\pp$ of $K$ such that
\begin{enumerate}
	\item $\pp$ is unramified in L and is of degree $1$ over $\Q$;
	\item $\pp\notin S$;
	\item $\left(\frac{L/K}{\pp}\right)=[\theta]$, and
	\item $\abs{\pp}\leq d_L^C (n\log(N_S))^2$,
\end{enumerate}
where $C$ is an absolute, effectively computable constant and $N_S=\prod_{\qq\in S}\abs{\qq}$.
\end{thm}

We would like to apply Theorem \ref{thm:effectiveCDT} to the extension $k/\Q$ but cannot as $k/\Q$ need not be Galois. Let $\widehat{k}$ be the Galois closure of $k$. The extension $\widehat{k}/\Q$ is by definition Galois, has degree at most $n_k!$, and has the property that a prime $p$ of $\Q$ splits completely in (resp. ramifies in) $k$ if and only if $p$ splits completely in (resp. ramifies in) $\widehat{k}$. Moreover, Serre \cite[Prop 6]{Serre-CDT} shows that $d_{\widehat{k}}\leq d_k^{n_k!-1}{n_k!}^{n_k!}.$ Let $p$ be a fixed rational prime which splits completely in $k$ and does not lie below any prime ramifying in $B$. By Theorem \ref{thm:effectiveCDT} (applied with $K=\Q$ and $L=\widehat{k}$), we may assume that
\[ p\leq \left[d_k^{n_k!-1}{n_k!}^{n_k!}\right]^A\cdot \left[n_k!\log\abs{\disc(B)}\right]^2. \]
Let $\qq^+$ be a fixed prime of $k^+$ lying above $\pp$ and $\qq_1,\qq_2$ be distinct primes of $k$ lying above $\qq^+$. Let $L/k$ be a quadratic extension which is ramified at every prime divisor of $\disc(B)$ and furthermore satisfies that $\qq_1$ ramifies in $L/k$ and $\qq_2$ splits in $L/k$. Then there exist primes $\qq_1',\qq_2',\qq_3'$ of $L$ such that $\qq_1\calO_L=(\qq_1')^2$ and $\qq_2\calO_L=\qq_2'\qq_3'$.  By Proposition \ref{proposition:derivedcovolume} there exist absolute, effectively computable constants $C_1,C_2$ such that $L=k(\lambda(\gamma))$ for some hyperbolic $\gamma\in\Gamma$ with length at most $e^{C_1V}d_L^{C_2+\log(V)}$. As the primes $\qq_1'$ and $\qq_2'$ both lie above $\qq^+$ and have different ramification degrees, we can infer that the extension $L/k^+$ is not Galois, hence $\lambda(\gamma)$ is not real by \cite[Lemma 2.3]{chinburg-geodesics}. As $k(\lambda(\gamma))=k(\lambda(\gamma)^n)$ for all $n\geq 1$, our assertion that no power of $\lambda(\gamma)$ is real follows from an identical argument.

If $k/k^+$ is not quadratic, then \cite[Lemma 2.3]{chinburg-geodesics} implies that every hyperbolic $\gamma \in \Gamma$ has a non-real eigenvalue. Therefore, the existence of the needed hyperbolic $\gamma \in \Gamma$ follows directly from Proposition \ref{proposition:derivedcovolume}.

In light of the above it remains only to bound $d_L$ in terms of $V$ and put all of our estimates together. By \cite[Thm 4-A]{effectiveGW} (see also \cite{Wang2}) we may assume that the conductor $\ff_{L/k}$ of the extension $L/k$ satisfies
\begin{align*}
\abs{\ff_{L/k}} &\leq 64^{n_k} B(\Omega_k) \abs{\disc(B)}^{2n_k}\cdot \left[d_k^{n_k!-1}{n_k!}^{n_k!}\right]^{2A}\cdot \left[n_k!\log\abs{\disc(B)}\right]^4 \\
& \leq 64^{n_k} 2^{n_k^3}d_k^{n_k} \abs{\disc(B)}^{2n_k}\cdot \left[d_k^{n_k!-1}{n_k!}^{n_k!}\right]^{2A}\cdot \left[n_k!\log\abs{\disc(B)}\right]^4,
\end{align*}
where the latter inequality follows from Lemma \ref{lem:integralbasisbound}. The conductor-discriminant formula \cite[Ch VII, (11.9)]{N} and the fact that $d_L=\abs{\Delta_{L/k}}d_k^2$ implies that
\[ d_L\leq 64^{n_k} 2^{n_k^3}d_k^{n_k+2} \abs{\disc(B)}^{2n_k}\cdot \left[d_k^{n_k!-1}{n_k!}^{n_k!}\right]^{2A}\cdot \left[n_k!\log\abs{\disc(B)}\right]^4. \]
It now follows that
\[ \ell(\gamma)\leq e^{C_1V}\left[64^{n_k} 2^{n_k^3}d_k^{n_k+2} \abs{\disc(B)}^{2n_k}\cdot \left[d_k^{n_k!-1}{n_k!}^{n_k!}\right]^{2A}\cdot \left[n_k!\log\abs{\disc(B)}\right]^4\right]^{C_2+\log(V)}. \]
In order to bound this expression from above we will make use of the following three inequalities:
\begin{compactenum}
\item $n_k\leq 23+\log(V)$ (proven in \cite[Lemma 4.3]{chinburg-smallestorbifold}),
\item $d_k\leq V^{22}$ (proven as a part of \cite[Thm 4.1]{linowitz-isospectralsize}),
\item $\abs{\disc(B)}\leq 10^{57}V^7$ (proven in Lemma \ref{lem:Bdiscbound}).
\end{compactenum}
Substituting in these upper bounds, an elementary computation shows that $\ell_0(\gamma)\leq Ce^{\left(\log(V)^{\log(V)}\right)}$ for some absolute constant $C$. We use here that the term $n_k!^{n_k!}$ essentially dominates over all of the others and that its size can be estimated using Stirling's formula. The proposition follows.
\end{proof}

\subsection{Proof of Theorem \ref{theorem:recognizingalgebras}}

We begin with a lemma which is needed in the proof of Theorem \ref{theorem:recognizingalgebras}.

\begin{lem}\label{lem:thetabound}
For all $x>2$ we have $\prod_{p\leq x}p \leq e^{\frac{21x}{\log^3(x)}+x}$.	
\end{lem}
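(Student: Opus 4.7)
The plan is to take logarithms and reduce the claim to the equivalent statement
\[ \theta(x) := \sum_{p \leq x} \log p \;\leq\; x + \frac{21\, x}{\log^3 x} \quad (x > 2), \]
which is an explicit bound on the Chebyshev theta function. I would split the proof into a large-$x$ range and a small-$x$ range, and treat each with different tools.

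For large $x$, say $x \geq x_0$ for an explicit threshold $x_0$, I would invoke a sharp effective form of the prime number theorem in the style of Rosser--Schoenfeld and its refinements by Schoenfeld and Dusart. These yield bounds of the form
\[ \abs{\theta(x) - x} \;\leq\; \frac{\eta\, x}{\log^k x} \qquad (x \geq x_0) \]
for various explicit pairs $(\eta, k)$. Choosing $k=3$ (or larger, reducing to $k=3$ by absorbing extra powers of $\log x$ into a constant at the cost of a modest increase in $\eta$), one gets $\eta \ll 1$, which is comfortably below $21$, so the large-$x$ inequality follows immediately in this range.

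For the remaining range $2 < x \leq x_0$, I would verify the bound by direct computation. The key observation is that $\theta(x)$ is a right-continuous step function whose only jumps occur at the primes, so the left-hand side $\theta(x)$ attains its local maxima precisely at $x = p$ for $p$ prime, while the right-hand side $x + 21x/\log^3 x$ is continuous and increasing for $x$ sufficiently large (and in any case well-behaved on $(2, x_0]$). It therefore suffices to check the inequality at (just after) each prime $p \leq x_0$ and at $x$ slightly larger than $2$. This is a finite check over the primes up to $x_0$, carried out numerically; for very small $x$ the bonus term $21x/\log^3 x$ dominates by itself since $\log^3 x$ is small, so the inequality is trivial in that subregion.

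The main obstacle is calibration: I need to choose $x_0$ and the particular explicit PNT estimate so that the constant $\eta$ in the large-$x$ regime is at most $21$, while keeping $x_0$ small enough that the finite numerical verification is genuinely manageable. Quoting one of Dusart's explicit bounds with $k \geq 3$ is the cleanest route, after which the small-$x$ verification is routine.
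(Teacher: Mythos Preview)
Your proposal is correct and is essentially the paper's approach: the paper takes logarithms to reduce the statement to the bound $\theta(x) \leq x + 21x/\log^3 x$ on the Chebyshev theta function and then cites Dusart's explicit estimate (Theorem~5.2 of \cite{dusart}) directly. You have simply unpacked what that citation entails (the large-$x$ effective PNT bound plus a finite small-$x$ check), so there is nothing to add.
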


Setting $P(x)=\prod_{p\leq x}p$, $\log(P(x))$ is the Chebyshev theta function and the lemma follows from \cite[Thm 5.2]{dusart}. We now prove Theorem \ref{theorem:recognizingalgebras} from the introduction.

\begin{proof}[Proof of Theorem \ref{theorem:recognizingalgebras}]
If $B\not\cong B^\prime$, interchanging $B,B^\prime$ if necessary, we may assume that there exists a prime $\frakp$ of $k$ which ramifies in $B$ but not in $B^\prime$. By hypothesis if $\frakp$ is not real archimedean then $|\frakp|<x$. Let $L/k$ be a quadratic extension such that:
	\begin{compactenum}
	\item $[L_{\mathscr Q}:k_\frakq]=2$ for all primes $\frakq$ of $k$ with $|\frakq|<x$, $\frakq\neq \frakp$ and all primes $\mathscr Q$ of $L$ lying above $\frakq$;
	\item $[L_{\mathscr P}:k_\frakp]=1$ for all primes $\mathscr P$ of $L$ lying above $\frakp$; and
	\item all real places of $k$ not equal to $\frakp$ ramify in $L/k$.
	\end{compactenum}
The existence of such an extension $L/k$ follows from the Grunwald--Wang theorem. Using \cite[Ch 4]{effectiveGW} (see also \cite{Wang2}), we can find such an extension $L/k$ whose conductor $\ff_{L/k}$ satisfies
	\begin{equation}\label{equation:GWbound}
	\abs{\ff_{L/k}} \leq (32)^{n_k^2}B(\Omega)\left(\prod_{p\leq x}p\right)^{2n_k}.
	\end{equation}
Lemmas \ref{lem:integralbasisbound}, \ref{lem:thetabound}, and the conductor-discriminant formula imply that the relative discriminant $\Delta_{L/k}$ has norm less than the bound given in the theorem's statement. The proof of the theorem now follows from the Albert--Brauer--Hasse--Noether theorem, which implies that $B^\prime$ admits an embedding of $L/k$ whereas $B$ does not. \end{proof}

\subsection{Proof of Theorem \ref{thm:effectiveCHLR}}\label{mainproofsubsection}

In this subsection, we prove Theorem \ref{thm:effectiveCHLR}. We start with a proposition.

\begin{prop}\label{proposition:finalbounds}
Let $\Gamma$ be an arithmetic Fuchsian or Kleinian group with trace field $k$, quaternion algebra $B$, and covolume less than $V$. Let $L/k$ be a quadratic extension which embeds into $B$. We additionally suppose that $L$ is not totally complex in the case that $\Gamma$ is a Fuchsian group. If $\abs{\Delta_{L/k}}$ is less than the bound in the statement of Theorem \ref{theorem:recognizingalgebras} (applied with $X=10^{930}V^{130}$ if $\Gamma$ is a Fuchsian group and $X=10^{57}V^7$ if $\Gamma$ is a Kleinian group), then there exists a hyperbolic $\gamma\in\Gamma$ and absolute, effectively computable constants $c_1,c_2$ such that $L=k(\lambda(\gamma))$ and $\ell_0(\gamma)\leq c_1e^{c_2\log(V)V^{\alpha}}$, where $\alpha=130$ if $\Gamma$ is a Fuchsian group and is equal to $7$ otherwise.
\end{prop}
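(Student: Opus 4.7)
The plan is to reduce the proposition to Proposition \ref{proposition:derivedcovolume} by carefully bounding the absolute discriminant $d_L$ in terms of $V$. Since the hypothesis on $L$ forces it to embed in $B$ (this is the only way $L=k(\lambda(\gamma))$ can occur for some hyperbolic $\gamma \in \Gamma$), and the ``not totally complex'' condition in the Fuchsian setting matches the corresponding hypothesis of Proposition \ref{proposition:derivedcovolume} (via Lemma \ref{lem:regbound2}), we may apply that proposition to obtain $\gamma \in \Gamma$ with $L = k(\lambda(\gamma))$ and $\ell(\gamma) \leq e^{C_1 V} d_L^{C_2 + \log V}$ for absolute constants $C_1, C_2$.

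Next I would convert the bound on the relative discriminant supplied by Theorem \ref{theorem:recognizingalgebras} into a bound on $d_L$ using $d_L = |\Delta_{L/k}| \cdot d_k^2$. Taking logarithms gives
\[
\log d_L \;\le\; n_k^3\log 64 + (n_k+2)\log d_k + 2 n_k\!\left(\tfrac{21 X}{\log^3 X} + X\right),
\]
where $X = 10^{930} V^{130}$ in the Fuchsian case and $X = 10^{57} V^{7}$ in the Kleinian case (the latter two values come from applying Lemma \ref{lem:Bdiscbound} to bound $|\disc(B)|$ in Theorem \ref{theorem:recognizingalgebras}).

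The remaining step is routine asymptotic bookkeeping. Using the standard inequalities $n_k \leq 23 + \log V$ (Lemma 4.3 of \cite{chinburg-smallestorbifold}) and $d_k \leq V^{22}$ (from the proof of Theorem 4.1 of \cite{linowitz-isospectralsize}), the dominant term in the bound for $\log d_L$ is $2 n_k X$, which is at most a constant times $(\log V)\,V^{\alpha}$, where $\alpha = 130$ (respectively $\alpha = 7$). Substituting into the bound from Proposition \ref{proposition:derivedcovolume},
\[
\log \ell(\gamma) \;\le\; C_1 V + (C_2 + \log V)\log d_L \;\le\; c_2\,\log(V)\,V^{\alpha},
\]
for an absolute, effectively computable constant $c_2$, and the desired estimate $\ell(\gamma) \leq c_1 e^{c_2 \log(V) V^{\alpha}}$ follows.

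I do not anticipate a genuine obstacle here: the main effort is simply verifying that the hypothesis of Proposition \ref{proposition:derivedcovolume} is met (in particular that $L$ in fact embeds into $B$, which is implicit in the statement) and then tracking the constants in the chain of inequalities carefully enough to absorb lower-order terms into the $c_2 \log(V) V^{\alpha}$ exponent. The mild subtlety is that the two cases (Fuchsian vs.\ Kleinian) use different discriminant bounds for $B$ coming from Lemma \ref{lem:Bdiscbound}, which is precisely the source of the two different values of $\alpha$.
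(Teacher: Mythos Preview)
Your proposal is correct and follows essentially the same approach as the paper: apply Proposition \ref{proposition:derivedcovolume} to obtain $\ell(\gamma)\le e^{C_1V}d_L^{C_2+\log V}$, then bound $d_L$ via $d_L=|\Delta_{L/k}|\,d_k^2$, the hypothesis on $|\Delta_{L/k}|$, and $d_k\le V^{22}$. The paper's proof is terser and does not display the intermediate logarithmic bookkeeping, but the argument is identical in structure.
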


\begin{proof}
By Proposition \ref{proposition:derivedcovolume}, there exists a hyperbolic $\gamma\in\Gamma$ such that $L=k(\lambda_\gamma)$ and with length at most $e^{C_1V}d_L^{C_2+\log(V)}$ for absolute, effectively computable constants $C_1,C_2$. The result now follows from our hypothesis about $\abs{\Delta_{L/k}}$, the formula $d_L= \abs{\Delta_{L/k}}d_k^2$ and the fact that $d_k\leq V^{22}$.
\end{proof}

We are now ready to prove Theorem \ref{thm:effectiveCHLR}. In what follows, $c_1,c_2$ are the constants appearing in Proposition \ref{proposition:finalbounds}, $C$ is the constant appearing in Proposition \ref{proposition:derivednonrealeigenvalue} and $c\geq C$ is such that $c_1e^{c_2\log(V)V^7}\leq ce^{\left(\log(V)^{\log(V)}\right)}$ for all $V\geq 0.9$. Note that by Chinburg--Friedman--Jones--Reid \cite{chinburg-smallestmanifold}, every arithmetic hyperbolic $3$-manifold has volume $V>0.94$.

\begin{proof}[Proof of Theorem \ref{thm:effectiveCHLR}]
We prove Theorem \ref{thm:effectiveCHLR} in the case that the manifolds $M_i$ are $3$--manifolds and then make a few remarks regarding the (minor) modifications needed for the $2$--manifold case. Let $\Gamma_j = \pi_1(M_j)$ for $j=1,2$. As in Reid's proof that isospectral arithmetic $3$--manifolds are commensurable \cite{Reid-Isospectral}, it suffices to show that the quaternion algebras from which $\Gamma_1,\Gamma_2$ arise are isomorphic. To that end, let $(k_1,B_1)$ and $(k_2,B_2)$ be the number fields and quaternion algebras associated to $\Gamma_1,\Gamma_2$. By Proposition \ref{proposition:derivednonrealeigenvalue} there are hyperbolic $\gamma_1\in\Gamma_1, \gamma_2\in\Gamma_2$ with non-real eigenvalues $\lambda_{\gamma_1}$ and $\lambda_{\gamma_2}$ whose associated geodesics have the same complex length. Taking powers of $\gamma_1,\gamma_2$ if necessary, we may assume that $\gamma_1\in\Gamma_1^{(2)}, \gamma_2\in\Gamma_2^{(2)}$. By \eqref{TraceLengthFormula}, we have $\tr(\gamma_1) = \pm \tr(\gamma_2)$ and consequently that the images in $\C$ of $k_1,k_2$ coincide \cite[Lemma 2.3]{chinburg-geodesics}. Hence, $B_1,B_2/k$ are defined over a common number field $k$. To prove that $B_1,B_2$ are isomorphic, by Lemma \ref{lem:Bdiscbound} we have $\abs{\disc(B)}, \abs{\disc(B')}<10^{57}V^7$. Let $L/k$ be a quadratic extension which embeds into $B_1$ and with $\abs{\Delta_{L/k}}$ less than the bound given in Theorem \ref{theorem:recognizingalgebras}; we take $x=10^{57}V^7$. By Proposition \ref{proposition:finalbounds}, there exists $u_1\in B$ such that $L=k(u_1)$ with the property that the image $\gamma_1 \in \Gamma_1$ of $u_1$ in $\MM(2,\C)$ is a hyperbolic and satisfies
\[ \ell_0(\gamma_1)\leq c_1e^{c_2\log(V)V^7}\leq ce^{\left(\log(V)^{\log(V)}\right)}. \]
By hypothesis, there exists $\gamma_2\in\Gamma_2$ such that $\ell(\gamma_1)=\ell(\gamma_2)$. Let $u_2$ be a preimage of $\gamma_2$ in $B_2$. By \eqref{TraceLengthFormula}, we see that $\tr(\gamma_1)= \pm \tr(\gamma_2)$. Since the fields $k(u_1)$ and $k(u_2)$ are both isomorphic to $L$, we see that $B_2$ admits an embedding of $L$. The same argument shows that if $L^\prime/k$ is a quadratic extension which embeds into $B_2$ and has $\abs{\Delta_{L'/k}}$ less than the bound given in Theorem \ref{theorem:recognizingalgebras} with $x=10^{57}V^7$, then $B_1$ admits an embedding of $L^\prime$. Theorem \ref{theorem:recognizingalgebras} now shows that $B_1\cong B_2$, finishing our proof.
\end{proof}

\begin{rmk}
We briefly comment on the modifications needed for the $2$--dimensional case of Theorem \ref{thm:effectiveCHLR}.

As was noted in the $3$--dimensional case, it suffices to show that the quaternion algebras associated to $\Gamma_1,\Gamma_2$ are isomorphic. By Proposition \ref{proposition:finalbounds} there exists a hyperbolic $\gamma\in\Gamma_1$ such that $\ell(\gamma)\leq c_1e^{c_2\log(V)V^{130}}$. Proposition \ref{prop:fuchsiantracefield} shows that we further have $k_1=\Q(\tr(\gamma))$. By hypothesis there exists $\gamma^\prime\in\Gamma_2$ such that $\ell(\gamma)=\ell(\gamma^\prime)$, hence $\tr(\gamma)=\pm \tr(\gamma^\prime)$. Since $\Q(\tr(\gamma))=\Q(\tr(\gamma^\prime))$, we may assume, as above, that $k_1=k_2$. The remainder of the proof is analogous to the proof of the $3$--dimensional case; here, we can show that $B_1\cong B_2$ by proving that all maximal subfields $L$ of these algebras that are not totally complex and have $\abs{\Delta_{L/k}}$ less than the bound in Theorem \ref{theorem:recognizingalgebras} coincide.
\end{rmk}

We conclude this section by proving the following strengthening of Theorem \ref{thm:effectiveCHLR} in the case that the groups $\Gamma_i$ are derived from orders in quaternion algebras.

\begin{thm}\label{thm:arithmeticbusercourtois}
Let $k_1, k_2$ be totally real number fields (resp., number fields with a unique complex place), $B_1,B_2$ be quaternion division algebras over $k_1,k_2$ which are ramified at all but one real place (resp., all real places) of $k_1,k_2$, $\calO_1,\calO_2$ be maximal orders in $B_1,B_2$, and $V$ be such that $\covol(\Gamma_{\calO_1}),\covol(\Gamma_{\calO_2})\leq V$. There exist absolute effectively computable constants $c_1,c_2,c_3$ such that if the length sets (resp., complex length sets) of $\mathbf{H}^2/\Gamma_{\calO_1}, \mathbf{H}^2/\Gamma_{\calO_2}$ (resp., $\mathbf{H}^3/\Gamma_{\calO_1}, \mathbf{H}^3/\Gamma_{\calO_2}$) agrees for all lengths less than $c_1e^{c_2\log(V)V^{130}}$ (resp., $c_3e^{\left(\log(V)^{\log(V)}\right)}$) then $\mathbf{H}^2/\Gamma_{\calO_1}$ and $\mathbf{H}^2/\Gamma_{\calO_2}$ (resp., $\mathbf{H}^3/\Gamma_{\calO_1}$ and $\mathbf{H}^3/\Gamma_{\calO_2}$) are length-isospectral (resp., complex length-isospectral).
\end{thm}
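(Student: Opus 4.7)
The plan is to deduce Theorem \ref{thm:arithmeticbusercourtois} from Theorem \ref{thm:effectiveCHLR} by combining the latter's effective commensurability output with the classical length-isospectrality of arithmetic lattices derived from distinct maximal orders in a fixed quaternion algebra. First, I would apply Theorem \ref{thm:effectiveCHLR} directly to the manifolds $M_j := \mathbf{H}^n/\Gamma_{\calO_j}$ for $j=1,2$ (with $n=2$ or $3$ as appropriate), which have volume at most $V$. Since by hypothesis the primitive length spectra of $M_1$ and $M_2$ coincide for all lengths below the threshold appearing in Theorem \ref{thm:effectiveCHLR}, that theorem yields that $M_1$ and $M_2$ are commensurable in the wide sense.

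Next, I would invoke Theorem \ref{CommensurabilityInvariants} to conclude that $k_1 \cong k_2 =: k$ as number fields and $B_1 \cong B_2 =: B$ as $k$-algebras. Fixing such isomorphisms, we regard $\calO_1$ and $\calO_2$ as two maximal orders in a single quaternion algebra $B$ over $k$. Observe that $B$ is split at exactly one archimedean place of $k$ (the place furnishing the Fuchsian, respectively Kleinian, embedding), so the Eichler condition holds automatically for $B$ with respect to this archimedean place.

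Finally, I would appeal to the classical theorem of Vign\'{e}ras (see \cite[Chapter 12]{MR}), which asserts that, when the Eichler condition is satisfied, any two arithmetic lattices of the form $\Gamma_{\calO}$ obtained from maximal orders $\calO$ in a common quaternion algebra have identical primitive length spectra, with multiplicities. Applying this to $\calO_1$ and $\calO_2$ yields the asserted length-isospectrality of $\mathbf{H}^n/\Gamma_{\calO_1}$ and $\mathbf{H}^n/\Gamma_{\calO_2}$. The delicate point --- and the main obstacle if one were to reprove step three from scratch --- is the agreement of multiplicities, not merely of the underlying sets of lengths. This is handled by the Skolem--Noether correspondence between $\Gamma_{\calO}$-conjugacy classes of hyperbolic elements with characteristic root $\lambda$ and $\calO^1$-conjugacy classes of optimal embeddings $\calO_k[\lambda] \hookrightarrow \calO$, combined with a strong approximation argument for $B^1$ showing that this count is independent of the particular maximal order chosen in $B$.
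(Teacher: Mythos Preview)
Your argument has a genuine gap in the third step. The claim that any two maximal orders $\calO_1,\calO_2$ in a common quaternion algebra $B$ satisfying the Eichler condition yield length-isospectral quotients is \emph{false} in general. Strong approximation for $B^1$ guarantees that all maximal orders lie in a single genus, but the type number $[J_k:k^*\nr(\mathfrak{N}(\calO))]$ can still exceed $1$, and when it does the Chinburg--Friedman selectivity phenomenon \cite{Chinburg-Friedman-selectivity} can occur: there may exist quadratic $\calO_k$--orders $\Omega$ that embed into exactly half of the isomorphism types of maximal orders and not the others. For such $\Omega$, the corresponding geodesic length appears in the spectrum of $\Gamma_{\calO_1}$ but not $\Gamma_{\calO_2}$ (or vice versa). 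Your strong approximation argument correctly controls the embedding count for non-selective orders, but it breaks precisely in the selective case; Vign\'{e}ras' isospectrality theorem requires additional hypotheses (essentially that $k_B=k$) that are not available here.

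The paper's proof confronts this obstruction directly rather than sidestepping it. After obtaining $k_1\cong k_2$ and $B_1\cong B_2$ via Theorem \ref{thm:effectiveCHLR}, it argues by contradiction: if $\Gamma_{\calO_1}$ and $\Gamma_{\calO_2}$ are not length-isospectral, then by \cite[Theorem 3.3]{Chinburg-Friedman-selectivity} and \cite[Theorem 12.4.5]{MR} the discrepancy must come from a selective quadratic order $\Omega\subset L$, and selectivity forces $L/k$ to be unramified at all finite places, so $d_L=d_k^2$. Proposition \ref{proposition:derivedcovolume} then produces a distinguishing geodesic whose length is bounded by $e^{C_1V}d_k^{2C_2+2\log V}$, which lies below the threshold in the hypothesis---a contradiction. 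Thus the effectiveness of the selectivity constraint (small $d_L$) is exactly what makes the argument work, and is the piece your proposal is missing.
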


\begin{proof}
Theorem \ref{thm:effectiveCHLR} and its proof show that $k_1\cong k_2, B_1\cong B_2$, hence $\calO_2$ is isomorphic to a maximal order $\calO\subset B_1$. If $\calO\cong\calO_1$, then $\mathbf{H}^2/\Gamma_{\calO_1},\textbf{H}^2/\Gamma_{\calO_2}$ (or $\mathbf{H}^3/\Gamma_{\calO_1}, \mathbf{H}^3/\Gamma_{\calO_2}$) will be isometric, hence isospectral. Suppose that $\mathcal O\not\cong \calO_1$ and $\mathbf{H}^2/\Gamma_{\mathcal O_1},\mathbf{H}^2/\Gamma_{\calO_2}$ (or  $\mathbf{H}^3/\Gamma_{\calO_1}$ and $\mathbf{H}^3/\Gamma_{\mathcal O_2}$) are not isospectral. By \cite[Thm 3.3]{Chinburg-Friedman-selectivity} and \cite[Thm 12.4.5]{MR}, there exists a quadratic extension $L/k$ which is unramified at all finite places (and which is not totally complex if the field $k_1$ is totally real) and a quadratic order $\Omega=\calO_k[\gamma]\subset L$ such that $\Omega$ embeds into exactly one of $\set{\calO_1,\calO}$. By Proposition \ref{proposition:derivedcovolume}, there exist absolute constants $C_1,C_2$ and a length (resp., complex length)
\begin{equation}\label{linoeq}
\ell_0(\gamma)\leq e^{C_1V}d_k^{2C_2+2\log(V)}\leq e^{C_1V} V^{44(C_2+2\log(V))},
\end{equation}
which lies in the length set (resp., complex length set) of exactly one of $\set{\mathbf{H}^2/\Gamma_{\calO},\mathbf{H}^2/\Gamma_{\calO_1}}$ (resp.,  $\set{\mathbf{H}^3/\Gamma_{\calO},\mathbf{H}^3/\Gamma_{\calO_1}}$). We note that the latter inequality (\ref{linoeq}) follows from the proof of \cite[Thm 4.1]{linowitz-isospectralsize}. By choosing constants appropriately, we contradict our hypothesis that the length set (resp., complex length sets) of $\mathbf{H}^2/\Gamma_{\calO_1}$ and $\mathbf{H}^2/\Gamma_{\calO_2}$ (or $\mathbf{H}^3/\Gamma_{\calO_1}$ and $\mathbf{H}^3/\Gamma_{\calO_2}$) coincide for all sufficiently small lengths.
\end{proof}

\section{Geometric submanifolds: Effective rigidity and asymptotic growth of surfaces}

We now turn our attention to an effective version of \cite[Thm 1.1]{McReid} which stated that two arithmetic hyperbolic 3--manifolds with the same totally geodesic surfaces are commensurable provided they have a totally geodesic surface.

\subsection{Proof of Theorem \ref{thm:recognizingalgebrasfromsubalgebras}}

To prove Theorem \ref{thm:recognizingalgebrasfromsubalgebras}, we require the following easy extension of \cite[Thm 9.55]{MR}.


\begin{thm}\label{thm:MR955extension}
Let $L$ be a number field and let $B/L$ be a quaternion algebra which is ramified precisely at the real places $\nu_1,\dots,\nu_s$ of $L$, let $k < L$ such that $[L:k]=2$, and $B_0/k$ be a quaternion algebra which is ramified at $\nu_1\mid_k,\dots,\nu_s\mid_k$ and at no other real places of $k$. Then $B\cong B_0\otimes_k L$ if and only if $\Ram_f(B)$ consists of the $2r$ distinct places $\{\mathfrak{P}_1,\mathfrak{P}_1^\prime,\dots,\mathfrak{P}_r,\mathfrak{P}_r^\prime\}$, where $\mathfrak{P}_i\cap \calO_k = \mathfrak{P}^\prime_i\cap \calO_k=\pp_i$ and $\Ram_f(B_0)\supset \{ \pp_1,\dots,\pp_r\}$ with $\Ram_f(B)\setminus\{\pp_1,\dots,\pp_r\}$ consisting of primes in $\calO_k$ which are either ramified or inert in the extension $L/k$.
\end{thm}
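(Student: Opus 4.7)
My plan is to follow the same strategy as the proof of Theorem 9.5.5 in \cite{MR}, adapting the argument so that it works without the assumption that $L$ has a unique complex place. By the Albert--Brauer--Hasse--Noether theorem, two quaternion algebras over $L$ are isomorphic if and only if they have the same ramification set, so the entire proof reduces to computing $\Ram(B_0\otimes_k L)$ in terms of $\Ram(B_0)$ and the splitting behavior of primes in $L/k$, and then comparing with $\Ram(B)$.

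The main computation is the standard local analysis of base change of a quaternion algebra along a quadratic extension. For a finite prime $\pp$ of $k$ and a prime $\mathfrak{P}$ of $L$ above $\pp$, the completion $L_{\mathfrak{P}}$ is either a trivial, unramified quadratic, or ramified quadratic extension of $k_\pp$ according to whether $\pp$ splits, is inert, or ramifies in $L/k$. Since every quadratic extension of a local field splits any quaternion division algebra over that field, $B_0\otimes_k L_{\mathfrak{P}}$ is ramified exactly when $\pp\in\Ram_f(B_0)$ and $\pp$ splits in $L/k$; in that case both primes above $\pp$ are ramified, giving the pair structure $\{\mathfrak{P}_i,\mathfrak{P}_i^\prime\}$ in the statement. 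For the infinite places, a real place of $k$ in $\Ram_\infty(B_0)$ remains ramified in $B_0\otimes_k L$ precisely at those places of $L$ above it that are still real, while complex places of $L$ are always split because $\mathbb{H}\otimes_{\mathbb{R}}\mathbb{C}\cong M_2(\mathbb{C})$.

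With this local dictionary in hand, the ``only if'' direction is immediate: if $B\cong B_0\otimes_k L$, then the description above forces $\Ram_f(B)$ to consist of pairs $\{\mathfrak{P}_i,\mathfrak{P}_i^\prime\}$ lying over the primes of $\Ram_f(B_0)$ that split in $L/k$, which is the claimed structure, and the primes of $\Ram_f(B_0)$ not appearing in such a pair must be the inert or ramified ones. Conversely, if $\Ram_f(B)$ has the claimed shape, then the local computation shows $\Ram_f(B_0\otimes_k L)=\Ram_f(B)$; the assumption that $\Ram_\infty(B_0)=\{\nu_1|_k,\dots,\nu_s|_k\}$ together with the hypothesis that $\nu_1,\dots,\nu_s$ are all real ensures $\Ram_\infty(B_0\otimes_k L)=\{\nu_1,\dots,\nu_s\}=\Ram_\infty(B)$, and so Albert--Brauer--Hasse--Noether gives $B\cong B_0\otimes_k L$.

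The only genuine departure from \cite[Theorem 9.5.5]{MR} is bookkeeping at infinity: when $L$ has several complex places, we must track for each real place $\nu$ of $k$ which of the places of $L$ above $\nu$ are real versus complex, and be sure the hypothesis on $B_0$ is matched to the places of $L$ that remain real. This is the only step where care is required, and it is handled by the assumption that $B_0$ is ramified at exactly $\nu_1|_k,\dots,\nu_s|_k$ among the real places of $k$, together with the fact that complex places cannot support ramification. Once this observation is made, the proof proceeds exactly as in the original MR argument.
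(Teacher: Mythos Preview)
Your proposal is correct and aligns exactly with what the paper does: the paper explicitly omits the proof, remarking only that it is an ``easy extension of \cite[Theorem 9.5.5]{MR}'' whose argument carries over with minimal change. Your plan---reduce to a local computation of $\Ram(B_0\otimes_k L)$ via Albert--Brauer--Hasse--Noether and track the bookkeeping at infinite places when $L$ has several complex places---is precisely that adaptation.

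One small point worth tightening in your ``if'' direction: when you assert that $\Ram_\infty(B_0\otimes_k L)=\{\nu_1,\dots,\nu_s\}$, you are implicitly using that whenever a real place $\mu$ of $k$ lies in $\Ram_\infty(B_0)$ and has two real places of $L$ above it, both of those places appear among the $\nu_i$. This is not an independent hypothesis but is forced by the setup (since $\Ram_\infty(B_0)$ is \emph{defined} as the set of restrictions $\nu_i|_k$, and any other real place of $L$ over such a $\mu$ would, by your local computation, also have to lie in $\Ram_\infty(B)$ for any $B_0$ with $B_0\otimes_k L\cong B$); still, it would be cleaner to say this explicitly rather than leave it implicit.
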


\begin{proof}[Proof of Theorem \ref{thm:recognizingalgebrasfromsubalgebras}]
Let $R_1$ denote the set of places of $L_1$ which ramify in $B_1$, $R_2$ denote the set of places of $L_2$ which ramify in $B_2$ and $R_i^\prime$ (for $i=1,2$) denote the set of places of $k$ lying below a place in $R_i$. As $B_0\otimes_k L_1\cong B_1$ and $B_0\otimes_k L_2\cong B_2$, it suffices to show that $L_1\cong L_2$. To that end, suppose that $L_1\not\cong L_2$ and let $L = L_1L_2$, which is Galois over $k$ with Galois group $(\Z/2\Z)\times (\Z/2\Z)$. Elementary properties of Frobenius elements \cite[Ch 10]{Lang-ANT} show that if $\pp_k$ is a prime of $k$ which is unramified in $L/k$ and whose Frobenius element $(\pp_k, L/k)$ corresponds to the element $(1,1)$ of $\Gal(L/k)$ then $\pp_k$ is inert in both $L_1/k$ and $L_2/k$. Similarly, if the Frobenius element $(\pp_k, L/k)$ corresponds to the element $(1,0)$ of $\Gal(L/k)$ then $\pp_k$ is inert in $L_1/k$ and splits in $L_2/k$. It now follows from the bound on the least prime ideal in the Chebotarev density theorem \cite{effectiveCDT} (see also Theorem \ref{thm:effectiveCDT}) that there exist primes $\omega_1,\omega_2$ of $k$ such that
\begin{compactenum}
\item $\omega_1$ is inert in $L_1/k$ and splits in $L_2/k$,
\item $\omega_2$ is inert in both $L_1/k$ and $L_2/k$,
\item Neither $\omega_1$ nor $\omega_2$ lie in $R_1^\prime \cup R_2^\prime$,
\item $\abs{\omega_1},\abs{\omega_2} \leq d_k^C (2\log(\abs{\disc(B_1)}\abs{\disc(B_2)}))^2$.
\end{compactenum}
Let $B'/k$ be a quaternion algebra such that $\Ram_\infty(B')=\{\nu\mid_k : \nu\in\Ram_\infty(B_1)\}$ and which ramifies at all primes lying in $R_1^\prime\cup \{\omega_1\}$ (and possibly at $\omega_2$ as well if needed for parity reasons). As $B_0\otimes_k L_1\cong B_1$, we deduce from Theorem \ref{thm:MR955extension} that $B'\otimes_k L_1\cong B_1$. Recall that $\omega_1$ splits in $L_2/k$. We can therefore write $\omega_1=\nu_1\nu_2$ for primes $\nu_1,\nu_2$ of $L_2$. Then $(L_2)_{\nu_1}\cong k_{\omega_1}$, which implies that $B_2\otimes_{L_2} (L_2)_{\nu_1}\cong B'\otimes_k k_{\omega_1}$, as by assumption we have $B'\otimes_k L_2\cong B_2$. As $\omega_1$ ramifies in $B'$ we deduce that $\nu_1$ ramifies in $B_2$, hence $\omega_1\in R_2^\prime$. This is a contradiction and so $L_1\cong L_2$.
\end{proof}

\subsection{Proof of Theorem \ref{EffectiveMcReid}}

We begin with lemma about the coarea of certain arithmetic Fuchsian groups.

\begin{lem}\label{lem:fuchsianvolbound}
Let $k$ be totally real, $B/k$ be a quaternion algebra, and $\calO \su B$ be a maximal order. Then $\coarea(\Gamma_\calO)\leq 2\pi^2 \abs{\disc(B)}$.
\end{lem}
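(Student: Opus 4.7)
The plan is to invoke Borel's explicit covolume formula for the arithmetic Fuchsian group $\Gamma_\calO$. Since $k$ is totally real and $\Gamma_\calO$ is a Fuchsian group, the quaternion algebra $B$ must ramify at all but exactly one real place of $k$. Borel's formula (see \cite[Chapter 11]{MR}, or the analogous three-dimensional version invoked in the proof of Lemma \ref{lem:Bdiscbound}) then reads
\[ \coarea(\Gamma_\calO) = \frac{8\pi\, d_k^{3/2}\, \zeta_k(2) \prod_{\pp \mid \disc(B)}(|\pp|-1)}{(4\pi^2)^{n_k}}, \]
with the product taken over the finite primes of $k$ at which $B$ ramifies.

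From this formula I would first apply the trivial estimate $\prod_{\pp}(|\pp|-1) \leq |\disc(B)|$, where $|\disc(B)|$ denotes the norm of the finite part of the discriminant, as elsewhere in the paper. This strips off the dependence on $B$ and reduces the lemma to the field-only inequality
\[ \frac{8\pi\, d_k^{3/2}\, \zeta_k(2)}{(4\pi^2)^{n_k}} \leq 2\pi^2. \]

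The cleanest route for this reduced inequality is via the Dedekind functional equation. For totally real $k$ of degree $n_k$ one has the identity $d_k^{3/2}\zeta_k(2) = (2\pi^2)^{n_k}|\zeta_k(-1)|$, which converts the displayed inequality into the equivalent uniform bound $|\zeta_k(-1)| \leq \frac{\pi}{4}\cdot 2^{n_k}$ valid across all totally real number fields. I expect this last estimate to be the main obstacle: it is not immediate from a single elementary inequality, but should be reachable by combining the Siegel--Klingen rationality of $\zeta_k(-1)$ with the Euler product bound $\zeta_k(2) \leq \zeta(2)^{n_k} = (\pi^2/6)^{n_k}$ and classical Odlyzko-type estimates on the root discriminant $d_k^{1/n_k}$, which together pin down the ratio $|\zeta_k(-1)|/2^{n_k}$ uniformly.
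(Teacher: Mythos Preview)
Your version of Borel's formula is the correct one (cf.\ \cite[Theorem~11.1.1]{MR}); the paper's proof quotes it \emph{without} the factor $d_k^{3/2}$, and from that truncated expression the bound follows in a single line from $\zeta_k(2)\le\zeta(2)^{n_k}=(\pi^2/6)^{n_k}$. So the paper's argument and yours diverge exactly at the point where you (correctly) retain $d_k^{3/2}$.

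The gap in your proposal is that the field-only inequality you reduce to cannot be established, because it is false. Your functional-equation identity $d_k^{3/2}\zeta_k(2)=(2\pi^2)^{n_k}\lvert\zeta_k(-1)\rvert$ is right, and the target becomes $\lvert\zeta_k(-1)\rvert\le\tfrac{\pi}{4}\cdot 2^{n_k}$. But the Odlyzko--Poitou estimates you invoke are \emph{lower} bounds on the root discriminant; there is no uniform upper bound on $d_k^{1/n_k}$ over totally real fields, and hence none on $\lvert\zeta_k(-1)\rvert/2^{n_k}$. Concretely, for real quadratic $k=\Q(\sqrt{D})$ one has $n_k=2$ while $\lvert\zeta_k(-1)\rvert\ge d_k^{3/2}/(2\pi^2)^2\to\infty$ as $D\to\infty$, whereas $\tfrac{\pi}{4}\cdot 2^{2}=\pi$. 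Taking such a $k$ together with a quaternion algebra $B/k$ split at one real place and ramified at the other real place and at a single finite prime of small norm, one gets $\coarea(\Gamma_\calO)\asymp d_k^{3/2}$ while $2\pi^2\lvert\disc(B)\rvert$ stays bounded. So with the correct Borel formula the stated inequality fails in general; your reduction exposes this rather than proving the lemma.

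For the paper's purposes this is harmless: the lemma is applied only in Proposition~\ref{proposition:ggsprop}, where $k$ is the maximal totally real subfield of the trace field of a $3$--manifold of volume $V$, so $d_k\le V^{22}$ and an extra factor of $d_k^{3/2}$ is absorbed into the $e^{CV}$. The honest fix is to state the bound as $\coarea(\Gamma_\calO)\le 2\pi^2\, d_k^{3/2}\lvert\disc(B)\rvert$, which follows immediately from the correct formula via $\zeta_k(2)\le(\pi^2/6)^{n_k}$, and to carry the innocuous $d_k^{3/2}$ through the application.
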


\begin{proof}
Borel's volume formula \cite{borel-commensurability} (see also \cite[Ch 11]{MR}) shows that
\[ \coarea(\Gamma_\calO)=\frac{8\pi^2\zeta_k(2)\prod_{\pp\mid \disc(B)}(\abs{\pp}-1)}{(4\pi^2)^{n_k}}. \]
The lemma now follows from the well-known inequality $\zeta_k(s)\leq \zeta(s)^{n_k}$.
\end{proof}

\begin{prop}\label{proposition:ggsprop}
Let $M=\mathbf{H}^3/\Gamma$ be an arithmetic hyperbolic $3$--manifold with trace field $L$, quaternion algebra $B$, and volume $V$. Suppose that $k = L^+$ is the maximal totally real subfield of $L$ and that $[L:k]=2$. Let $B_0$ be a quaternion algebra over $k$ such that $B_0\otimes_k L\cong B$. Then there exists an absolute effectively computable constant $C$ such that $M$ contains a totally geodesic surface with area at most $2\pi^2\abs{\disc(B_0)}e^{CV}$.
\end{prop}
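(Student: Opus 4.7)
The plan is to realize a low-area totally geodesic surface in $M$ by constructing a small-coarea arithmetic Fuchsian group from $B_0$ using Lemma \ref{lem:fuchsianvolbound}, embedding it into a Kleinian group commensurable with $\Gamma$ via the algebra inclusion $B_0\hookrightarrow B_0\otimes_k L=B$, and then intersecting with $\Gamma$ while losing at most a factor $e^{CV}$ in coarea.

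First I would fix a maximal order $\mathcal{D}_0$ of $B_0$, so that Lemma \ref{lem:fuchsianvolbound} gives $\coarea(P\mathcal{D}_0^1)\leq 2\pi^2|\disc(B_0)|$. Using $B=B_0\otimes_k L$, I would choose a maximal order $\mathcal{O}$ of $B$ containing the image of $\mathcal{D}_0\mathcal{O}_L$, so that $P\mathcal{D}_0^1\subset P\mathcal{O}^1$ realizes a Fuchsian subgroup of a Kleinian group (the embeddings $B_0\hookrightarrow\MM(2,\mathbf{R})\subset\MM(2,\mathbf{C})$ and $B\hookrightarrow\MM(2,\mathbf{C})$ being compatible via the real place of $k$ beneath the complex place of $L$). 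This already produces a totally geodesic surface of area at most $2\pi^2|\disc(B_0)|$ in $\mathbf{H}^3/P\mathcal{O}^1$.

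Next I would transfer the surface to $M$. Since $\Gamma$ and $P\mathcal{O}^1$ share invariant trace field and invariant quaternion algebra, Theorem \ref{CommensurabilityInvariants} gives that they are commensurable in the wide sense; after conjugating $\Gamma$ (which does not alter $M$ up to isometry), the subgroup $\Lambda:=\Gamma\cap P\mathcal{O}^1$ has finite index in both. The intersection $\Sigma:=\Gamma\cap P\mathcal{D}_0^1=\Lambda\cap P\mathcal{D}_0^1$ is then a Fuchsian subgroup of $\Gamma$ defining a totally geodesic immersion $\mathbf{H}^2/\Sigma\to M$, and using $P\mathcal{D}_0^1\subset P\mathcal{O}^1$ one obtains
\[ \coarea(\Sigma)=[P\mathcal{D}_0^1:\Sigma]\cdot\coarea(P\mathcal{D}_0^1)\leq [P\mathcal{O}^1:\Lambda]\cdot 2\pi^2|\disc(B_0)|. \]

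It remains to bound $[P\mathcal{O}^1:\Lambda]\leq e^{CV}$, and this is the hard part. My plan here is to combine three ingredients: (i) Proposition \ref{prop:derivedindex}, which gives $\covol(\Gamma^{(2)})\leq e^{CV}$ and, via the fact that $\Gamma^{(2)}$ is contained in $P\mathcal{O}^{\prime 1}$ for some maximal order $\mathcal{O}^\prime$ of $B$, yields $\covol(P\mathcal{O}^{\prime 1})\leq e^{CV}$; (ii) Borel's covolume formula, which shows $\covol(P\mathcal{O}^1)=\covol(P\mathcal{O}^{\prime 1})$ since this covolume depends only on $k$ and $B$ and not on the choice of maximal order; and (iii) the injection $\Gamma/\Lambda\hookrightarrow\Gamma^{\max}/P\mathcal{O}^1$ into a maximal arithmetic lattice $\Gamma^{\max}$ containing both, combined with the Chinburg--Friedman--Jones--Reid lower bound on $\covol(\Gamma^{\max})$, to deduce $[\Gamma:\Lambda]\leq [\Gamma^{\max}:P\mathcal{O}^1]\leq e^{CV}$. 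Putting these together gives $[P\mathcal{O}^1:\Lambda]=[\Gamma:\Lambda]\cdot V/\covol(P\mathcal{O}^1)\leq e^{CV}$, hence $\coarea(\Sigma)\leq 2\pi^2|\disc(B_0)|\,e^{CV}$, as desired. The main obstacle is the bookkeeping between the two maximal orders $\mathcal{O}$ and $\mathcal{O}^\prime$ of $B$—one tailored to contain $\mathcal{D}_0$, the other to contain $\Gamma^{(2)}$—which is reconciled precisely because Borel's formula makes their covolumes coincide, so no direct comparison of $\mathcal{O}$ and $\mathcal{O}^\prime$ is needed.
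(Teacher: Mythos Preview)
Your overall architecture---embed a maximal-order Fuchsian group $P\mathcal{D}_0^1$ into $P\mathcal{O}^1$, intersect with (a conjugate of) $\Gamma$, and bound the resulting index---matches the paper's. The gap is in step (iii): you assert the existence of a maximal arithmetic lattice $\Gamma^{\max}$ containing \emph{both} $\Gamma$ and $P\mathcal{O}^1$. For arithmetic Kleinian groups this is not automatic. The commensurator is dense in $\PSL(2,\mathbf{C})$, there are infinitely many maximal arithmetic lattices in a commensurability class, and two strictly commensurable arithmetic lattices need not sit inside any common lattice. Without $\Gamma^{\max}$ there is no injection $\Gamma/\Lambda\hookrightarrow\Gamma^{\max}/P\mathcal{O}^1$, hence no bound on $[\Gamma:\Lambda]$, and your index estimate for $[P\mathcal{O}^1:\Lambda]$ collapses. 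The freedom in conjugating $\Gamma$ does not obviously rescue this, since you have already committed to the particular maximal order $\mathcal{O}$ containing $\mathcal{D}_0$.

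The paper avoids this with a much simpler device that you were already circling in your ingredients (i)--(ii): instead of intersecting $\Gamma_{\calO_0}$ with $\Gamma$, intersect it with $\Gamma^{(2)}$, setting $\Delta=\Gamma_{\calO_0}\cap\Gamma^{(2)}$. The point is that $\Gamma^{(2)}$, being derived from $B$, lies inside $\Gamma_\calO$ for a \emph{concrete} maximal order; with $\Gamma_{\calO_0}\subset\Gamma_\calO$ as well, both groups live in the same $\Gamma_\calO$, and one gets $[\Gamma_{\calO_0}:\Delta]\le[\Gamma:\Gamma^{(2)}]\le e^{CV}$ directly from Proposition~\ref{prop:derivedindex}. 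Since $\Delta\subset\Gamma^{(2)}\subset\Gamma$, this already produces the required totally geodesic surface in $M$ with area at most $2\pi^2|\disc(B_0)|e^{CV}$. In short, use $\Gamma^{(2)}$ as the group you intersect with, not merely as a tool to bound $\covol(P\mathcal{O}^1)$; this supplies exactly the missing common overgroup that your $\Gamma^{\max}$ was meant to provide.
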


\begin{proof}
Let $\calO_0 \su B_0$ be a maximal order, $\calO \su B$ a maximal order such that $\Gamma_{\calO_0}\subset \Gamma_{\calO}$, and define $\Delta=\Gamma_{\calO_0}\cap \Gamma^{(2)}$. Then $\Delta$ is a Fuchsian group contained in $\Gamma$ and we have $[\Gamma_{\calO_0}:\Delta] \leq [\Gamma:\Gamma^{(2)}]$. By Lemma \ref{lem:gammaindexbound}, $[\Gamma:\Gamma^{(2)}]\leq e^{CV}$, where $C$ is an absolute effectively computable constant. The proposition now follows from Lemma \ref{lem:fuchsianvolbound}.
\end{proof}

For a hyperbolic $3$--manifold $M$, we denote by $GS(M)$ the collection of isometry types of totally geodesic surfaces. We can prove our main result of this section, an effective version of \cite[Thm 1.1]{McReid}.

\begin{proof}[Proof of Theorem \ref{EffectiveMcReid}]
Let $M_1=\mathbf{H}^3/\Gamma_1$, $M_2=\mathbf{H}^3/\Gamma_2$ and $B_1/L_1, B_2/L_2$ be the quaternion algebras and trace fields of $M_1, M_2$. Since $GS(M_1) \ne \emptyset$, $\Gamma_1$ contains a non-elementary Fuchsian group. By considering the quaternion algebra and trace field of this Fuchsian group we see that the maximal totally real subfield $k$ of $L_1$ satisfies $[L_1:k]=2$. As $GS(M_1)\cap GS(M_2)$ is non-empty, we see that $[L_2:k]=2$ and also that there exists a quaternion algebra $B_0$ over $k$ such that $B_0\otimes_k L_1\cong B_1$ and $B_0\otimes_k L_2\cong B_2$. Let $C$ be the constant appearing in the bound on the least prime ideal in the Chebotarev density theorem \cite{effectiveCDT} (see also Theorem \ref{thm:effectiveCDT}). By combining the estimates $d_k\leq d_{L_1}\leq V^{22}$ and $\abs{\disc(B_1)},\abs{\disc(B_2)}\leq 10^{57}V^7$ with an elementary computation, there exists an absolute constant $C_1$ such that
\[ V^{C_1}\geq d_k^{2C} (2\log(\abs{\disc(B_1)}\abs{\disc(B_2)}))^4\abs{\disc(B_1)}\abs{\disc(B_2)}. \]
Let $C_2$ be the constant appearing in Proposition \ref{proposition:ggsprop} and choose $C_3$ so that $2\pi^2 V^{C_1}e^{C_2V}\leq e^{C_3V}$;  note that $C_3$ may be chosen independently of $M_1,M_2$ and $V$. We will now show that if a finite type hyperbolic surface $X$ lies in $GS(M_1)$ if and only it lies in $GS(M_2)$ whenever the area of $X$ is less than $e^{C_3V}$, then $M_1$ and $M_2$ are commensurable. Let $B'$ be a quaternion algebra over $k$ which is ramified at all real places of $k$ except the identity and satisfies
\[ \abs{\disc(B')}\leq d_k^{2C} (2\log(\abs{\disc(B_1)}\abs{\disc(B_2)}))^4\abs{\disc(B_1)}\abs{\disc(B_2)} \]
as well as $B'\otimes_k L_1\cong B_1$. Proposition \ref{proposition:ggsprop} and the discussion above show that $M_1$ contains a totally geodesic surface $X$ (arising from the quaternion algebra $B$) with area at most $e^{C_3V}$. Our assumption implies that $M_2$ contains a totally geodesic surface isometric to $X$ as well. Consequently we must have $B'\otimes_k L_2\cong B_2$. Interchanging the roles of $B_1,B_2$, we see that by Theorem \ref{thm:recognizingalgebrasfromsubalgebras}, $L_1\cong L_2$ and $B_1\cong B_2$. Hence by Theorem \ref{CommensurabilityInvariants}, $M_1,M_2$ are commensurable.
\end{proof}

\subsection{Proof of Theorem \ref{thm:infinitelymanytotallygeodesics}}

We conclude this section with a proof of Theorem \ref{thm:infinitelymanytotallygeodesics}.

\begin{proof}[Proof of Theorem \ref{thm:infinitelymanytotallygeodesics}]
Setting $k = L^+$, as $M$ contains a totally geodesic surface, we have $[L:k]=2$. Theorem \ref{TGS-Theorem}, along with a slight modification to the proof of Theorem \ref{thm:quatembedding} in the case that $r=1$, shows that there exists a constant $c(L)$ depending only on $L$ (the constant only depends on $k$) such that for sufficiently large $x$ the number of quaternion algebras $B'/k$ satisfying $B'\otimes_k L\cong B$ and $\abs{\disc(B')}\leq x$ is asymptotic to $\left[c(L)\disc(B)^{1/2}\right]x/\log(x)^{1/2}$. The theorem now follows from Proposition  \ref{proposition:ggsprop}.
\end{proof}


\begin{thebibliography}{11}



	\bibitem{apostol}
	T.~M.~Apostol,
	\newblock \emph{Introduction to Analytic Number Theory},
	\newblock Undergraduate Texts in Mathematics, Springer-Verlag, 1976.
	
	\bibitem{artin}
	E.~Artin,
	\newblock \emph{\"{U}ber eine neue Art von L-Reihen},
	\newblock Abh. Math. Sem. Univ. Hamburg \textbf{3} (1923), 89--108.
	
	
	\bibitem{Bel07}
	M.~Belolipetsky,
	\newblock \emph{Counting maximal arithmetic subgroups},
	\newblock with an appendix by J.~Ellenberg, A.~Venkatesh,
	\newblock Duke Math. J. \textbf{140} (2007), 1--33.
	
	\bibitem{BGLS}
	M.~Belolipetsky, T.~Gelander, A.~Lubotzky, A.~Shalev,
	\newblock \emph{Counting arithmetic lattices and surfaces},
	\newblock Ann. of Math. \textbf{172} (2010), 2197--2221.
	
	\bibitem{BL12}
	M.~Belolipetsky, A.~Lubotzky,
	\newblock \emph{Manifolds counting and class field towers},
	\newblock Adv. Math. \textbf{229} (2012), 3123--3146.

	\bibitem{bhargavaICM}
	M.~Bhargava,
	\newblock \emph{Higher composition laws and applications},
	\newblock International {C}ongress of {M}athematicians. {V}ol. {II}, 271--294. Eur. Math. Soc., 2006.

	\bibitem{borel-commensurability}
	A.~Borel,
	\newblock \emph{Commensurability classes and volumes of hyperbolic {$3$}-manifolds},
	\newblock Ann. Scuola Norm. Sup. Pisa Cl. Sci. (4) \textbf{8}(1) (1981), 1--33.
	
	
	\bibitem{BP}
	A.~Borel, G.~Prasad,
	\newblock \emph{Finiteness theorems for discrete subgroups of bounded covolume in
	  semi-simple groups},
	\newblock Inst. Hautes \'Etudes Sci. Publ. Math. \textbf{69} (1989) 119--171.
	
	
	\bibitem{brindza}
	B.~Brindza,
	\newblock \emph{On the generators of $S$--unit groups in algebraic number fields},
	\newblock Bull. Austral. Math. Soc. \textbf{43} (1991), 325--329.
	
	\bibitem{doud}
	S.~Brueggeman, D.~Doud,
	\newblock \emph{Local corrections of discriminant bounds and small degree extensions
	  of quadratic base fields},
	\newblock Int. J. Number Theory \textbf{4} (2008), 349--361.
	
	\bibitem{BGLM02}
	M.~Burger, T.~Gelander, A.~Lubotzky, S.~Mozes,
	\newblock \emph{Counting hyperbolic manifolds},
	\newblock Geom. Funct. Anal. \textbf{12} (2002), 1161--1173.
	
	\bibitem{Buser}
	P.~Buser,
	\newblock \emph{Geometry and spectra of compact Riemann surfaces},
	\newblock Birkh\"{a}user, 2010.
	
	
	\bibitem{CF}
	J.~W.~S.~Cassels, A.~Fr\"{o}lich,
	\newblock \emph{Algebraic number theory},
	\newblock London Mathematical Society, 1967.
		
	\bibitem{CRR}
	V.~I.~Chernousov, A.~S.~Rapinchuk, I.~A.~Rapinchuk,
	\newblock \emph{On the genus of a division algebra},
	\newblock C. R. Math. Acad. Sci. Paris \textbf{350} (2012), 17--18.
	
	\bibitem{chinburg-smallestorbifold}
	T.~Chinburg, E.~Friedman,
	\newblock \emph{The smallest arithmetic hyperbolic three-orbifold},
	\newblock Invent. Math. {\bf 86} (1986), 507--527.
	
	\bibitem{Chinburg-Friedman-selectivity}
	T.~Chinburg, E.~Friedman,
	\newblock \emph{An embedding theorem for quaternion algebras},
	\newblock J. London Math. Soc. \textbf{60} (1999), 33--44.
		
	\bibitem{chinburg-smallestmanifold}
	T.~Chinburg, E.~Friedman, K.~N. Jones, A.~W. Reid,
	\newblock \emph{The arithmetic hyperbolic 3--manifold of smallest volume},
	\newblock Ann. Scuola Norm. Sup. Pisa Cl. Sci. (4) \textbf{30} (2001), 1--40.

         \bibitem{chinburg-geodesics}
	T.~Chinburg, E.~Hamilton, D.~D. Long, A.~W. Reid,
	\newblock \emph{Geodesics and commensurability classes of arithmetic hyperbolic 3--manifolds},
	\newblock Duke Math. J. \textbf{145} (2008), 25--44.
	
	\bibitem{CR-SimpleGeodesics}
	T.~Chinburg, A.~W.~Reid,
	\newblock \emph{Closed hyperbolic 3-manifolds whose closed geodesics all are simple},
	\newblock J. Diff. Geom. \textbf{38} (1993), 545--558.

        \bibitem{quadraticextensions}
	H.~Cohen, F.~Diaz~y Diaz, M.~Olivier,
	\newblock \emph{Enumerating quartic dihedral extensions of {$\Q$}},
	\newblock Compos. \textbf{133} (2002), 65--93.

        \bibitem{cohen02}
        H.~Cohen,
        \newblock \emph{Constructing and counting number fields},
        \newblock International Congress of Mathematicians, Vol. II, 129--138, 2002.

        \bibitem{cohn54}
        H.~Cohn,
        \newblock \emph{The density of abelian cubic fields},
        \newblock Proc. Amer. Math. Soc. \textbf{5} (1954), 476--477.


        \bibitem{DW88}
        B.~Datskovsky, D.~J. Wright,
        \newblock \emph{Density of discriminants of cubic extensions},
        \newblock J. Reine Angew. Math. \textbf{386} (1988), 116--138.

	\bibitem{davenportheilbronn}
	H.~Davenport, H.~Heilbronn,
	\newblock \emph{On the density of discriminants of cubic fields. {II}},
	\newblock Proc. Roy. Soc. London Ser. A, \textbf{322} (1971), 405--420.

        \bibitem{DelangeOG}
        H.~Delange,
	\newblock \emph{Th\'{e}or\`emes taub\'{e}riens et applications arithm\'{e}tiques},
	\newblock M\"{e}m. Soc. Roy. Sci. Li\`ege (4) \textbf{16} (1955), 87 pp.

        \bibitem{Delange}
        H.~Delange,
	\newblock \emph{G\'{e}n\'{e}ralisation du th\'{e}or\`eme de Ikehara},
	\newblock Ann. Sci. Ecole Norm. Sup. (3) \textbf{71} (1954), 213--242.


        \bibitem{dusart}
	P.~Dusart,
	\newblock Explicit estimates of some functions over primes.
	\newblock {\em The Ramanujan Journal}, Oct 2016.


	
%
%
	
	\bibitem{FM17}
	 D.~Futer, C.~Millichap,
	 \newblock \emph{Spectrally similar incommensurable 3-manifolds},
	 \newblock Proc. Lond. Math. Soc. \textbf{115} (2017) 411--447.
	
	\bibitem{Gangolli}
	R.~Gangolli,
	\newblock \emph{The length spectra of some compact manifolds},
	\newblock J. Diff. Geom. \textbf{12} (1977), 403--424.
	
	\bibitem{GS}
	S.~Garibaldi, D.~Saltman,
	\newblock \emph{Quaternion algebras with the same subfields},
	\newblock Quadratic forms, linear algebraic groups, and cohomology: Developments in Mathematics, \textbf{18} Springer, (2010), 225--238.
	
	\bibitem{Gelander}
	T.~Gelander,
	\newblock \emph{Homotopy type and volume of locally symmetric manifolds},
	\newblock Duke Math. J. \textbf{124} (2004), 459--515.
	
	
	\bibitem{GG08}
	T.~Gelander, Y.~Glasner,
	\newblock \emph{Countable primitive groups},
	\newblock Geom. Funct. Anal. \textbf{17} (2008), 1479--1523.
	

	
	\bibitem{Godement}
	R.~Godement,
	\newblock \emph{Domaines fondamentaux des groupes arithm\'{e}tiques},
	\newblock S\'{e}minaire Bourbaki, 1962/63. Fasc. 3, No. 257 25 pp. Secr\'{e}tariat math\'{e}matique, Paris.
	
	\bibitem{GLNP04}
	D.~Goldfeld, A.~Lubotzky, N.~Nikolov, L.~Pyber,
	\newblock \emph{Counting primes, groups, and manifolds},
	\newblock Proc. Natl. Acad. Sci. USA \textbf{101} (2004), 13428--13430.
	
	\bibitem{GLP04}
	D.~Goldfeld, A.~Lubotzky, L.~Pyber,
	\newblock \emph{Counting congruence subgroups},
	\newblock Acta Math. \textbf{193} (2004), 73--104.
	
	\bibitem{Gol12}
	A.~S.~Golsefidy,
	\newblock \emph{Counting lattices in simple Lie groups: the positive characteristic case},
	\newblock Duke Math. J. \textbf{161} (2012), 431--481.
	
	
	
	\bibitem{hajdu}
	L.~Hajdu,
	\newblock \emph{A quantitative version of Dirichlet's $S$--unit theorem in algebraic number fields},
	\newblock Publ. Math. Debrecen \textbf{42} (1993), 239--246.
	
	\bibitem{HW08}
	G.~H. Hardy, E.~M. Wright,
	\newblock {\em An introduction to the theory of numbers},
	\newblock Oxford University Press, 2008.

	\bibitem{Huber}
	H.~Huber,
	\newblock \emph{Zur analytischen theorie hyperbolischer Raumformen und Bewegungsgruppen. II},
	\newblock Math. Ann. \textbf{143} (1961), 463--464.
	
	

	



        \bibitem{effectiveCDT}
	J.~C. Lagarias, H.~L. Montgomery, A.~M. Odlyzko,
	\newblock \emph{A bound for the least prime ideal in the Chebotarev density theorem},
	\newblock Invent. Math. \textbf{54} (1979), 271--296.

        \bibitem{Lang-ANT}
	S.~Lang,
	\newblock \emph{Algebraic number theory},
	\newblock Springer-Verlag, 1994.
	
	\bibitem{LL08}
	M.~Larsen, A.~Lubotzky,
	\newblock \emph{Representation growth of linear groups},
	\newblock J. Eur. Math. Soc. \textbf{10} (2008), 351--390.
	
	\bibitem{LS04}
	M.~Liebeck, A.~Shalev,
	\newblock \emph{Fuchsian groups, coverings of Riemann surfaces, subgroup growth, random quotients and random walks},
	\newblock J. Algebra \textbf{276} (2004), 552--601.

	\bibitem{linowitz-selectivity}
	B.~Linowitz,
	\newblock \emph{Selectivity in quaternion algebras},
	\newblock J. of Number Theory \textbf{132} (2012), 1425--1437.

	\bibitem{linowitz-isospectralsize}
	B.~Linowitz,
	\newblock \emph{Families of mutually isospectral {R}iemannian orbifolds},
	\newblock Bull. Lond. Math. Soc. \textbf{47} (2015), 47--54.
	
	\bibitem{LMPT17}
	B.~Linowitz, D.~B.~McReynolds, P.~Pollack, L.~Thompson,
	\newblock \emph{Bounded gaps between primes and the length spectra of arithmetic hyperbolic 3--orbifolds},
	\newblock C. R. Math. Acad. Sci. Paris \textbf{355} (2017), 1121--1126.

	\bibitem{LMP}
	B.~Linowitz, J.~S.~Meyer, P.~Pollack,
	\newblock \emph{The length spectra of arithmetic hyperbolic 3-manifolds and their totally geodesic surfaces},
	\newblock New York J. Math.  \textbf{21} (2015), 955--972.


	\bibitem{Louboutin}
	S.~Louboutin,
	\newblock \emph{The Brauer--Siegel theorem},
	\newblock J. London Math. Soc. {\bf 72} (2005), 40--52.
	
	\bibitem{LN04}
	A.~Lubotzky, N.~Nikolov,
	\newblock \emph{Subgroup growth of lattices in semisimple Lie groups},
	\newblock Acta Math. \textbf{193} (2004), 105--139.
	
	\bibitem{LSV05}
	A.~Lubotzky,~B.~Samuels, U.~Vishne,
	\newblock \emph{{Division Algebras and Non-Commensurable Isospectral Manifolds}},
	\newblock Duke Math. J. \textbf{135} (2006), 361--379.
	
	\bibitem{LS}
	A.~Lubotzky, D.~Segal,
	\newblock \emph{Subgroup growth},
	\newblock Birkh\"{a}user Verlag, 2003.
	
	\bibitem{Macbeath}
	A.~Macbeath,
	\newblock \emph{Commensurability of cocompact three dimensional hyperbolic groups},
	\newblock Duke Math. J. \textbf{50} (1983), 1245--1253.
	
	\bibitem{MR-GeodesicSurfaces}
	C.~Maclachlan, A.~W.~Reid,
	\newblock \emph{Commensurability classes of arithmetic Kleinian groups and their Fuchsian subgroups},
	\newblock Math. Proc. Cambridge Philos. Soc. \textbf{102} (1987), 251--257.

	\bibitem{MR}
	C.~Maclachlan, A.~W. Reid,
	\newblock {\em The Arithmetic of Hyperbolic 3--Manifolds},
	\newblock Springer--Verlag, 2003.
	
	\bibitem{Marcus}
	D.~A.~Marcus,
	\newblock \emph{Number fields},
	\newblock Springer--Verlag, 1977.
	
	\bibitem{Margulis}
	G.~A.~Margulis,
	\newblock \emph{Certain applications of ergodic theory to the investigation of manifolds of negative curvature},
	\newblock Funkcional. Anal. i Prilo\v{z}en. \textbf{3} (1969), 89--90.
	
	\bibitem{Margulis91}
	G.~A.~Margulis,
	\newblock \emph{Discrete subgroups of semisimple Lie groups},
	\newblock Springer-Verlag, 1991.
	
	\bibitem{McReynolds}
	D.~B. McReynolds,
	\newblock \emph{Geometric spectra and commensurability},
	\newblock Canad. J. Math. \textbf{67} (2015), 184--197.


	
	\bibitem{McReid}
	D.~B. McReynolds, A.~W. Reid,
	\newblock \emph{The genus spectrum of a hyperbolic 3-manifold},
	\newblock Math. Res. Lett. \textbf{21} (2014), 169--185.


	
	\bibitem{Meyer}
	J.~S. Meyer,
	\newblock \emph{Division algebras with infinite genus},
	\newblock Bull. Lond. Math. Soc. \textbf{46} (2014), 463--468.


	
	\bibitem{Millichap}
	C.~Millichap,
	\newblock \emph{Factorial growth rates for the number of hyperbolic 3-manifolds of a given volume},
	\newblock Proc. Amer. Math. Soc. \textbf{143} (2015), 2201--2214.

	
	\bibitem{Millichap1}
	C.~Millichap,
	\newblock \emph{Mutations and short geodesics in hyperbolic 3-manifolds},
	\newblock Comm. Anal. Geom. \textbf{25} (2017), 625-683.

	

	\bibitem{N}
	J.~Neukirch,
	\newblock {\em Algebraic number theory},
	\newblock Springer-Verlag, 1999.

	\bibitem{Odlyzko-bounds}
	A.~M. Odlyzko,
	\newblock \emph{Bounds for discriminants and related estimates for class numbers, regulators and zeros of zeta functions: a survey of recent results},
	\newblock S\'em.\ Th\'eor.\ Nombres Bordeaux (2) \textbf{2} (1990), 119--141.
	
	\bibitem{Pierce}
	R.~S.~Pierce,
	\newblock \emph{Associative algebras},
	\newblock Springer--Verlag, 1982.

	\bibitem{t2normbound}
	A.~Peth{\H{o}}, S.~Schmitt,
	\newblock \emph{Elements with bounded height in number fields},
	\newblock Period. Math. Hungar. \textbf{43} (2001), 31--41.
	
	\bibitem{PR}
	V.~Platonov, A.~Rapinchuk,
	\emph{Algebraic groups and number fields},
	\newblock Academic Press, 1994.

	\bibitem{Poitou}
	G.~Poitou,
	\newblock \emph{Sur les petits discriminants},
	\newblock In {\em S\'eminaire Delange--Pisot--Poitou, 18e ann\'ee:
  	(1976/77), Th\'eorie des nombres, Fasc. 1 (French)}, pages Exp. No. 6,
  	18. Secr\'etariat Math., Paris, 1977.
	
	\bibitem{PrasadRap}
	G.~Prasad, A.~Rapinchuk,
	\newblock \emph{Weakly commensurable arithmetic groups and isospectral locally symmetric spaces}, 		
	\newblock Publ. Math., Inst. Hautes \'{E}tud. Sci. \textbf{109} (2009), 113--184.
	
	
	\bibitem{Rag}
	M.~S. Raghunathan,
	\newblock \emph{Discrete subgroups of Lie groups},
	\newblock Springer--Verlag, 1972.
	
	\bibitem{ReidInv}
	A.~W. Reid,
	\newblock \emph{A note on trace fields of Kleinian groups},
	\newblock Bull. London Math. Soc. \textbf{22} (1990), 349--352.

	\bibitem{Reid-Isospectral}
	A.~W. Reid,
	\newblock \emph{Isospectrality and commensurability of arithmetic hyperbolic {$2$}--
  	and {$3$}--manifolds},
	\newblock Duke Math. J. \textbf{65} (1992), 215--228.
	
	\bibitem{Reiner}
	I.~Reiner,
	\newblock \emph{Maximal orders},
	\newblock Oxford University Press, 2003.
	

	\bibitem{Rudin}
	W.~Rudin, \emph{Real and complex analysis},
	\newblock McGraw-Hill Book Co., 1987.

	\bibitem{Serre-CDT}
	J.-P. Serre,
	\newblock \emph{Quelques applications du th\'eor\`eme de densit\'e de Chebotarev},
	\newblock Inst. Hautes \'Etudes Sci. Publ. Math. \textbf{54} (1981), 323--401.
	
	\bibitem{Sunada}
	T.~Sunada,
	\newblock \emph{Riemannian coverings and isospectral manifolds},
	\newblock Ann. of Math. \textbf{121} (1985), 169--186.

	\bibitem{takeuchi}
	K.~Takeuchi,
	\newblock \emph{Commensurability classes of arithmetic triangle groups},
	\newblock J. Fac. Sci. Univ. Tokyo Sect. IA Math. \textbf{24} (1977), 201--212.
	
	\bibitem{Tits66}
	J.~Tits,
	\newblock \emph{Classification of algebraic semisimple groups},
	\newblock Amer. Math. Soc. (1966), 33--62.
	
	\bibitem{Thurston}
	W.~P.~Thurston,
	\newblock \emph{The Geometry and Topology of 3--manifolds},
	\newblock Princeton University, 1979.
	
	\bibitem{Vigneras80A}
	M.-F.~Vign{\'e}ras,
	\newblock \emph{Vari\'et\'es riemanniennes isospectrales et non isom\'etriques},
	\newblock Ann. of Math. \textbf{112} (1980), 21--32.

	\bibitem{effectiveGW}
	S.~Wang,
	\newblock \emph{An effective version of the Grunwald--Wang theorem},
	\newblock Ph.D. thesis, Caltech, 2001.
	
	\bibitem{Wang2}
	S.~Wang,
	\newblock \emph{Grunwald-{W}ang theorem, an effective version},
	\newblock Sci. China Math. \textbf{58} (2015), 1589--1606.

       \bibitem{wirsing}
	E.~Wirsing,
	\newblock \emph{Das asymptotische Verhalten von Summen \"{u}ber multiplikative Funktionen},
	\newblock Math. Annalen \textbf{143} (1961), 75--102.

	\bibitem{wood10}
	M.~M. Wood,
	\newblock \emph{On the probabilities of local behaviors in abelian field extensions},
	\newblock Compos. Math. \textbf{146} (2010), 102--128.

\end{thebibliography}
\end{document}